\DeclareMathAlphabet{\mathbbm}{U}{bbm}{m}{n}
\journal{Paper Submitted to J. Comput. Phys.}
\tikzstyle{terminator} = [rectangle, draw, text centered, rounded corners]
\tikzstyle{process} = [rectangle, draw, text centered]
\tikzstyle{decision} = [diamond, draw, text centered]
\tikzstyle{data}=[trapezium, draw, text centered, trapezium left angle=60, trapezium right angle=120]
\tikzstyle{connector} = [draw, -{latex[length=2mm]}, thick, blue]
\tikzstyle{connector2} = [draw, -{latex[length=2mm]}, thick, green]
\newtheorem{theorem}{Theorem}[section]
\newtheorem{lemma}[theorem]{Lemma}
\newtheorem{remark}{Remark}[section]
\newtheorem{assumption}{Assumption}
\numberwithin{equation}{section}
\theoremstyle{definition}
\newtheorem{exa}[remark]{Example}
\theoremstyle{remark}
\newcommand{\BU}{\mathbf{U}}
\begin{document}

\verso{W. Chen, S. Cui, K. Wu, T. Xiong, B. Yu}

\begin{frontmatter}
	
	\title {{\bf Bound-Preserving WENO Schemes for Temple--class systems}\tnoteref{tnote1}
	}
	
	\tnotetext[tnote1]{
		The work of Wei Chen and Tao Xiong was partially supported by National Key R\&D Program of China No. 2022YFA1004500, NSFC grant No. 11971025, and NSF of Fujian Province grant No. 2023J02003.
		The work of Shumo Cui was partially supported by Shenzhen Science and Technology Program grant No. RCJC20221008092757098.
	}

	\author[XU]{Wei \snm{Chen}}
	\ead{weichenmath@stu.xmu.edu.cn}
	
	\author[ICM,SUSTech]{Shumo \snm{Cui}\corref{cor1}}
	\ead{cuism@sustech.edu.cn}
	
	\author[SUSTech,ICM,NCAMS]{Kailiang \snm{Wu}}
	\ead{wukl@sustech.edu.cn}
	
	\author[USTC]{Tao \snm{Xiong}}
	\ead{txiong@xmu.edu.cn}

	\author[SUSTech]{Baoyue \snm{Yu}}
	\ead{???}
	
	\address[XU]{School of Mathematical Sciences, Xiamen University, Xiamen, Fujian 361005, China.}
	\address[SUSTech]{Department of Mathematics, Southern University of Science and Technology, Shenzhen 518055, China.}
	\address[ICM]{Shenzhen International Center for Mathematics, Southern University of Science and Technology, Shenzhen 518055, China.}
	\address[NCAMS]{National Center for Applied Mathematics Shenzhen (NCAMS), Shenzhen 518055, China.}
	\address[USTC]{School of Mathematical Sciences, University of Science and Technology of China, Hefei, Anhui 230026, China.}
	
	\cortext[cor1]{Corresponding author.}

\begin{abstract}
	This paper explores numerical schemes for Temple-class systems, which are integral to various applications including one-dimensional two-phase flow, elasticity, traffic flow, and sedimentation. Temple-class systems are characterized by conservative equations, with different pressure function expressions leading to specific models such as the Aw-Rascle-Zhang (ARZ) traffic model and the sedimentation model. Our work extends existing studies by introducing a moving mesh approach to address the challenges of preserving non-convex invariant domains, a common issue in the numerical simulation of such systems.
	Our study outlines a novel bound-preserving (BP) and conservative numerical scheme, designed specifically for non-convex sets in Temple-class systems, which is critical for avoiding non-physical solutions and ensuring robustness in simulations.
	We develop both local and global BP methods based on finite difference schemes, with numerical experiments demonstrating the effectiveness and reliability of our methods. 
	Furthermore, a parameterized flux limiter is introduced to restrict high-order fluxes and maintain bound preservation. This innovation marks the first time such a parameterized approach has been applied to non-convex sets, offering significant improvements over traditional methods.
	The findings presented extend beyond theoretical implications, as they are applicable to general Temple-class systems and can be tailored to ARZ traffic flow networks, highlighting the versatility and broad applicability of our approach. The paper contributes significantly to the field by providing a comprehensive method that maintains the physical and mathematical constrains of Temple-class systems.
\end{abstract}
	
	\begin{keyword}
	\MSC 65M60 \sep 65M12 \sep 76A30\sep 35L65 \sep 35R02
	
	\\
	 \KWD  Temple-class systems, Aw-Rascle-Zhang traffic flow networks, moving mesh, bound-preserving methods, non-convex sets, parameterized flux limiter.
	 \end{keyword}
	
\end{frontmatter}

\section{Introduction}

\subsection{The Temple-class system}\label{sec2}

In this paper, we are interested in the numerical approximaion of Temple-class system, which appears in various applications such as one-dimensional two-phase flow, elasticity, traffic flow, and sedimentation \cite{gu2012existence, keyfitz1980system,temple1982global,klingenberg2001stability,betancourt2014modeling,zhang2002non,betancourt2018random,aw2000resurrection}. 
The Temple-class system is described by the equation
\begin{equation}\label{TC}
    \partial_t \mathbf{U} + \partial_x \mathbf{F}(\mathbf{U}) = 0,
\end{equation}
where the conservative variable $\mathbf{U}$ and the flux function $\mathbf{F}$ are given by $\mathbf{U} = (\phi, y)^\top$, $\mathbf{F}(\mathbf{U}) = v \mathbf{U}$ with the variables $\phi(x, t)$ and $y(x, t)$ being the unknowns. 
Based on the following Assumption \ref{assum1}, we will yield the Aw-Rascle-Zhang (ARZ) traffic model \cite{aw2000resurrection,zhang2002non} and the sedimentation model \cite{betancourt2018random,betancourt2014modeling} with two particular function expressions for $v$.
\begin{assumption}
\label{assum1}
	The variable $y$ can be written as a function of $v$ and $\phi$. For each constant value of $v$, $y$ is strictly convex with respect to $\phi > 0$. Additionally, the partial derivative $\frac{\partial y}{\partial v}$ is positive for every constant $\phi$.
\end{assumption}

For ARZ model, the $\phi$ stands for the density of cars, and $y$ is the generalized momentum. The ``driver behavior'' $k$ and positivity (non-negativity) velocity $v$ are defined by
\begin{equation*} 
	\label{kv}
	k = k(\mathbf{U}) := y/\phi, \quad v = v(\mathbf{U}) := k - p(\phi).
\end{equation*}
Here, the pressure $p(\phi)$ describes the response of drivers to changes in the density of the vehicle ahead. In this paper, we will consider the following pressure function:
\begin{equation*}\label{P}
    p(\phi) = \left\{
    \begin{aligned}
        & \frac{v_{\rm ref}}{\gamma} \phi^\gamma, \; && {\rm if} \; \gamma > 0, \\
        & v_{\rm ref} \log{\phi}, \; && {\rm if} \; \gamma = 0
    \end{aligned}
    \right.
\end{equation*}
with a given reference velocity $v_{\rm ref}$.

We now briefly describe the sedimentation model. The variable $\phi$ represents the solids volume fraction, and $y$ denotes a conserved quantity that describes the local concentration of particles, incorporating additional information on the property $k$ that the particles possess. In this model, the expression of property $k$ and the settling velocity of the solids $v$ are given as
\begin{equation*} 
	\label{kv2}
	k = k(\mathbf{U}) := y/\phi, \quad v = v(\mathbf{U}) := kp(\phi).
\end{equation*}
characterized by the dimensionless constitutive function $p(\phi) = (1 - \phi)^2$. 

From a physical standpoint, both the ARZ and sedimentation models should ensure that $\phi$, representing density or solid volume fraction, is non-negative to avoid unphysical negative values. And $\phi$ should also be less than or equal to $1$ after normalization. Furthermore, since both models simulate unidirectional flow, the velocity should also be non-negative. Mathematically, the Temple-class systems possess two Riemann invariants, $k$ and $v$, both of which adhere to the minimum and maximum principles. Specifically, the relationship is given by:
\begin{equation}\label{eq:328}
	\BU(x,t) \in \mathcal{I}^{\rm G} := 
	\left\{ 
	\BU
	\in \mathbb{R}^2 
	: 
	v \in [v^{\rm G}_{\min}, v^{\rm G}_{\max}], 
	k \in [k^{\rm G}_{\min}, k^{\rm G}_{\max}]
	\right\},
\end{equation}
where $\mathcal{I}^{\rm G}$ is the global invariant domain. The global upper and lower bounds of Riemann invariants defined as
\begin{equation*}\label{eq:387}
\begin{gathered}
	v^{\rm G}_{\min} := \min_x v(x,0), \quad v^{\rm G}_{\max} := \max_x v(x,0),\\
	k^{\rm G}_{\min} := \min_x k(x,0), \quad k^{\rm G}_{\max} := \max_x k(x,0). 
\end{gathered}
\end{equation*}
The hyperbolic system described by \eqref{TC} exhibits finite propagation speed; therefore, one can extend the global bound in \eqref{eq:328} to a local version:
$\BU(x,t) \in \mathcal{I}^{\rm L}(x,t,t_0)$, $t > t_0 \geq 0$,
where the local invariant domain $\mathcal{I}^{\rm L}(x,t,t_0)$ is defined as follows:
\begin{equation}\label{eq:154}
	\mathcal{I}^{\rm L}(x,t,t_0)
	:=
	\left\{ 
	\BU= \left( \phi, y \right)^\top \in \mathbb{R}^2 
	: 
	\renewcommand\arraystretch{1.2}
	\begin{array}{r}
		v \in [v^{\rm L}_{\min}(x,t,t_0), v^{\rm L}_{\max}(x,t,t_0)] \\  
		k \in [k^{\rm L}_{\min}(x,t,t_0), k^{\rm L}_{\max}(x,t,t_0)] 
	\end{array}
	\right\},
\end{equation}
where the local bounds of Riemann invariants are defined as
\begin{equation*}\label{eq:412}
\begin{aligned}
	v^{\rm L}_{\min}(x,t,t_0) := \min_{\xi \in  \mathcal{X}} v(\xi,t_0), \quad v^{\rm L}_{\max}(x,t,t_0) := \max_{\xi \in  \mathcal{X}} v(\xi,t_0), \\
	k^{\rm L}_{\min}(x,t,t_0) := \min_{\xi \in  \mathcal{X}} k(\xi,t_0), \quad k^{\rm L}_{\max}(x,t,t_0) := \max_{\xi \in  \mathcal{X}} k(\xi,t_0).
\end{aligned}
\end{equation*}
Here, the local domain of determination is defined as $\mathcal{X}:=[x-\alpha(t-t_0),x+\alpha(t-t_0)]$, and $\alpha$ denotes the maximum propagation speed. Please note that $\mathcal{I}^{\rm L} \subseteq \mathcal{I}^{\rm G}$.

\subsection{Related works}
In the numerical simulations of Temple-class systems, it is crucial to preserve both the physical properties of the system (such as $\phi > 0$ and $v \geq 0$) and the characteristics of the equations (with Riemann invariants constrained by Riemann invariant domains). Failure to maintain these properties can result in non-physical solutions or even program crashes. Therefore, constructing a bound-preserving (BP) scheme for Temple-class systems is essential.

In recent years, the design of high-order BP schemes for hyperbolic systems has attracted considerable attention. These include preserve minimum/maximum principles \cite{zhang2010maximum,xu2014parametrized,Lv2014EntropyboundedDG,AndersoDobrevKolevKuzminQuezadaRiebenTomov2017}, positivity \cite{zhang2011maximum,wu2018positivity,WuShu2019}, and other types of bounds \cite{wu2021minimum,WuTang2015,wu2017design,mabuza2018local,Dmitri2021}. Zhang and Shu proposed a general framework for BP schemes based on arbitrary high-order finite volume Weighted Essentially Non-Oscillatory (WENO) and discontinuous Galerkin methods \cite{zhang2011maximum,zhang2010maximum}. Wu and Shu further introduced the geometric quasilinearization approach, providing an effective new method for BP analysis and design in systems with nonlinear and complex constraints \cite{wu2023geometric}. Xu \cite{xu2014parametrized} proposed a parametrized flux limiter method based on finite difference WENO. This method modifies the flux of high-order numerical schemes to ensure the enforcement of the discrete maximum principle, while using first-order BP numerical schemes as its foundation. To our knowledge, these BP schemes have primarily been applied to convex sets. However, the Riemann invariant domain in the Temple-class system discussed in this paper is inherently a non-convex set, making the design of BP schemes for such a system highly challenging.

The non-convex region within the invariant domain of the Temple-class system is a consequence of the constraint $v \leq v_{\max}$. In \cite{CHEN20251135007}, it was observed that for ARZ model, without an imposed upper bound on velocity, simulations involving low-density numerical examples produced velocities exceeding acceptable limits by two orders of magnitude. This led to extremely small time steps and, in some cases, caused the simulations to terminate. In an attempt to address this issue, an upper limit of $k \leq k_{\max}$ was introduced. However, this approach did not effectively resolve the fundamental problem of ensuring $v \leq v_{\max}$. 
For ARZ model, Chalons and Goatin \cite{chalons2007transport} addressed the issue of velocity overshoot by employing a random-sampling strategy in the numerical treatment of contact discontinuities, while using Godunov’s method elsewhere. 
Additionally, they extended the scheme to account for cases where the phase space is non-convex \cite{chalons2008godunov}. 
The random-sampling strategy has also been applied to the Temple-class system \cite{betancourt2018random}. 
However, the numerical schemes proposed in \cite{betancourt2018random,chalons2007transport,chalons2008godunov} are only first-order and lack conservation properties. Proposition 4.1 in \cite{betancourt2018random} states that no strictly conservative, first-order scheme with a consistent two-point numerical flux can satisfy the velocity maximum principle on a fixed mesh. This raises the question: can a strictly conservative, high-order scheme, particularly one that maintains the bound $v \leq v_{\max}$, be constructed using a moving mesh?

To date, moving mesh methods have been successfully applied to a wide range of problems in science and engineering \cite{zhang2018adaptive,huang2001variational,NMTMA-16-111,lei2021high,duan2021entropy,barlow2016arbitrary,boscheri2017high}. 
The readers are also referred to the review papers \cite{tang2005moving,budd2009adaptivity,donea2004arbitrary} and references therein.
These approachs are characterized by their flexibility in selecting mesh velocities. 
Specifically, the grid velocity can be chosen independently of the local fluid velocity, allowing for the recovery of fully Eulerian algorithms on fixed grids when the mesh velocity is set to zero. 
However, the design of these schemes is mainly to improve the robustness, accuracy, and ease of use of simulations by introducing mesh motion. 
This paper will achieve the purpose of non-convex bound preservation by using a moving mesh.

\subsection{Contributions}

The main contributions of this paper are as follows:
\begin{enumerate}[leftmargin=8mm]
	\item[1)] A numerical scheme that is both BP and conservative is constructed based on moving mesh, overcoming the challenge of maintaining the non-convex invariant domain in Temple-class systems. This provides a promising direction for solving the difficult problem of designing BP schemes for non-convex sets.

	\item[2)] Under the assumption that the grid movement speed is known, the numerical scheme is analyzed in detail. The analysis shows that for a first-order scheme to maintain bound preservation and conservation, the grid movement speed should strictly match the fluid velocity.

	\item[3)] A parameterized flux limiter is introduced to restrict high-order fluxes and maintain bound preservation. It is the first time that a parameterized approach for non-convex sets is provided, offering a solution to the challenge of maintaining velocity upper bounds.
	
	\item[4)] Both local and global BP methods based on finite difference schemes are developed. Numerical experiments show that the local method is more effective and robust compared to the global approach, offering a reliable solution for Temple-class systems.

	\item[5)] The proposed scheme is not only applicable to general Temple-class systems but is also extended to ARZ traffic flow networks, showing its versatility and potential for broader applications.
\end{enumerate}

The remainder of this paper is organized as follows. Section \ref{0sec2} briefly explains why it is impossible to design a BP scheme with a consistent two-point numerical flux for Temple-class systems on a fixed mesh, and outlines the challenges encountered when constructing a BP and conservative scheme on a moving mesh. In the Section \ref{0sec3}, we design a first-order BP scheme based on a moving mesh and provide the corresponding time-step constraints. We extend this first-order scheme to high-order scheme using a parameterized flux limiter, along with a method for selecting the appropriate parameters in Section \ref{0sec4}. In Section \ref{0sec5}, we present numerical examples for Temple-class systems and extend the approach to the ARZ traffic flow network. Finally, Section \ref{0sec6} offers a summary of the findings.

\section{The feasibility of bound-preserving schemes}\label{0sec2}

Based on Proposition 4.1 of \cite{betancourt2018random} and Section 3.2 of \cite{chalons2007transport}, we state the difficulty for disregarding $v \leq v_{\max}$ in the following theorem that is more appropriate for this paper.
\begin{theorem}
\label{contradiction}
	Consider a Temple-class system satisfying Assumption \ref{assum1} with the Riemann initial conditions:
	\begin{equation*}
		\mathbf{U}^0 (x) = \left\{
		\begin{aligned}
			& \mathbf{U}_L \quad {\rm for} \; x \leq x_{S + \frac{1}{2}}, \\
			& \mathbf{U}_R \quad {\rm otherwise},
		\end{aligned}
		\right.
	\end{equation*}
	where $v_L = v_R = v^{\dag}$ and $\phi > 0$. 
	For fixed computational mesh, no conservative and consistent scheme under the CFL condition $0 < \Delta t_n v^{\dag} / \Delta x \leq 1$
	will produce an approximate solution contained in $\mathcal{I}^{\rm G} \cap \{\phi > 0\}$, and the upper bound $v_{\rm max}^{\rm G}$ must be destroyed.
\end{theorem}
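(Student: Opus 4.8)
The plan is to exploit the fact that, for these Riemann data, the global invariant domain collapses to a strictly convex arc, and that conservation forces a cell average onto the chord of that arc, which lies strictly on the $v>v^{\dag}$ side.

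\emph{Step 1 (geometry).} Since $v_L=v_R=v^{\dag}$ we have $v^{\rm G}_{\min}=v^{\rm G}_{\max}=v^{\dag}$, so
\[
\mathcal{I}^{\rm G}\cap\{\phi>0\}\ \subseteq\ \Gamma:=\bigl\{(\phi,y)^\top:\ \phi>0,\ v(\phi,y)=v^{\dag}\bigr\},
\]
and by Assumption~\ref{assum1} the set $\Gamma$ is the graph $y=g(\phi)$ of a \emph{strictly convex} function $g$, while a state satisfies $v>v^{\dag}$ iff $y>g(\phi)$ (because $\partial y/\partial v>0$), i.e.\ iff it lies strictly above $\Gamma$. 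The states $\mathbf{U}_L$ and $\mathbf{U}_R$ lie on $\Gamma$ and are distinct (if $\phi_L=\phi_R$ the data are constant), hence they are the two endpoints of a nondegenerate arc of $\Gamma$, and the chord $\overline{\mathbf{U}_L\mathbf{U}_R}$ lies strictly above $\Gamma$ between them. (For orientation: the exact solution is here a single contact moving at speed $v^{\dag}$, the genuinely nonlinear wave being trivial, so the exact cell average straddling the discontinuity is exactly such a chord point.)

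\emph{Step 2 (conservation).} For a conservative scheme with a consistent two-point numerical flux, consistency gives $\widehat{\mathbf{F}}_{S-\frac12}=\mathbf{F}(\mathbf{U}_L)=v^{\dag}\mathbf{U}_L$ and $\widehat{\mathbf{F}}_{S+\frac32}=\mathbf{F}(\mathbf{U}_R)=v^{\dag}\mathbf{U}_R$ at the first step, so only the cells $I_S,I_{S+1}$ flanking the jump can change, and adding their update formulas (the unknown interface flux cancels) yields
\[
\overline{\mathbf{U}}_S^{\,n+1}+\overline{\mathbf{U}}_{S+1}^{\,n+1}=(1+\nu)\mathbf{U}_L+(1-\nu)\mathbf{U}_R,\qquad \nu:=\Delta t_n v^{\dag}/\Delta x\in(0,1].
\]
Thus the midpoint of the two updated states equals $\tfrac{1+\nu}{2}\mathbf{U}_L+\tfrac{1-\nu}{2}\mathbf{U}_R$, which for $\nu<1$ is a proper convex combination of two distinct points of $\Gamma$ and hence lies strictly above $\Gamma$.

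\emph{Step 3 (contradiction).} Suppose both updated states stayed in $\mathcal{I}^{\rm G}\cap\{\phi>0\}$, hence on the arc; write them $(a,g(a))$ and $(b,g(b))$ with $a,b$ between $\phi_L$ and $\phi_R$. Conservation of the two components reads $a+b=(1+\nu)\phi_L+(1-\nu)\phi_R=:2M_1$ and $g(a)+g(b)=(1+\nu)g(\phi_L)+(1-\nu)g(\phi_R)=:2M_2$. Taking $\phi_L<\phi_R$ without loss of generality, the feasible values of $a$ form an interval symmetric about $M_1$, on which the convex map $a\mapsto g(a)+g(2M_1-a)$ is maximized at the endpoint $a=\phi_L$, where strict convexity of $g$ gives the value $g(\phi_L)+g\bigl(\nu\phi_L+(1-\nu)\phi_R\bigr)<2M_2$; this contradicts $g(a)+g(b)=2M_2$. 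Running the same estimate with an inequality (i.e.\ assuming only the $k$-bounds together with $v\le v^{\dag}$ for both cells) shows that at least one cell must have $v>v^{\dag}$, so the upper bound $v^{\rm G}_{\max}=v^{\dag}$ is destroyed. The main obstacle is precisely this final convexity estimate, which is what turns the ``chord lies above the arc'' picture into a rigorous impossibility; a minor issue is the borderline $\nu=1$, where exact upwinding merely translates the data and stays bound-preserving, which one disposes of by noting that a bound-preserving scheme must keep the solution in $\mathcal{I}^{\rm G}$ for every admissible $\Delta t_n$, hence also for some $\nu<1$.
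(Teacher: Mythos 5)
Your proposal is correct and follows essentially the same route as the paper: the invariant domain collapses onto the strictly convex arc $\Gamma$, conservation pins the sum of the two updated states to the chord point $(1+\nu)\mathbf{U}_L+(1-\nu)\mathbf{U}_R$, and strict convexity yields the contradiction. Your Step 3 is a more careful rendering of the paper's ``the only admissible pair is the two arc endpoints, forcing $\lambda=1/2$'' argument (maximizing $g(a)+g(2M_1-a)$ over the feasible interval), and you rightly flag the borderline $\nu=1$ case, which the paper's proof silently glosses over.
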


\begin{figure}[hbtp]
	\begin{center}
		\mbox{{\includegraphics[width = 0.5\textwidth,trim=32 1 50 40,clip]{./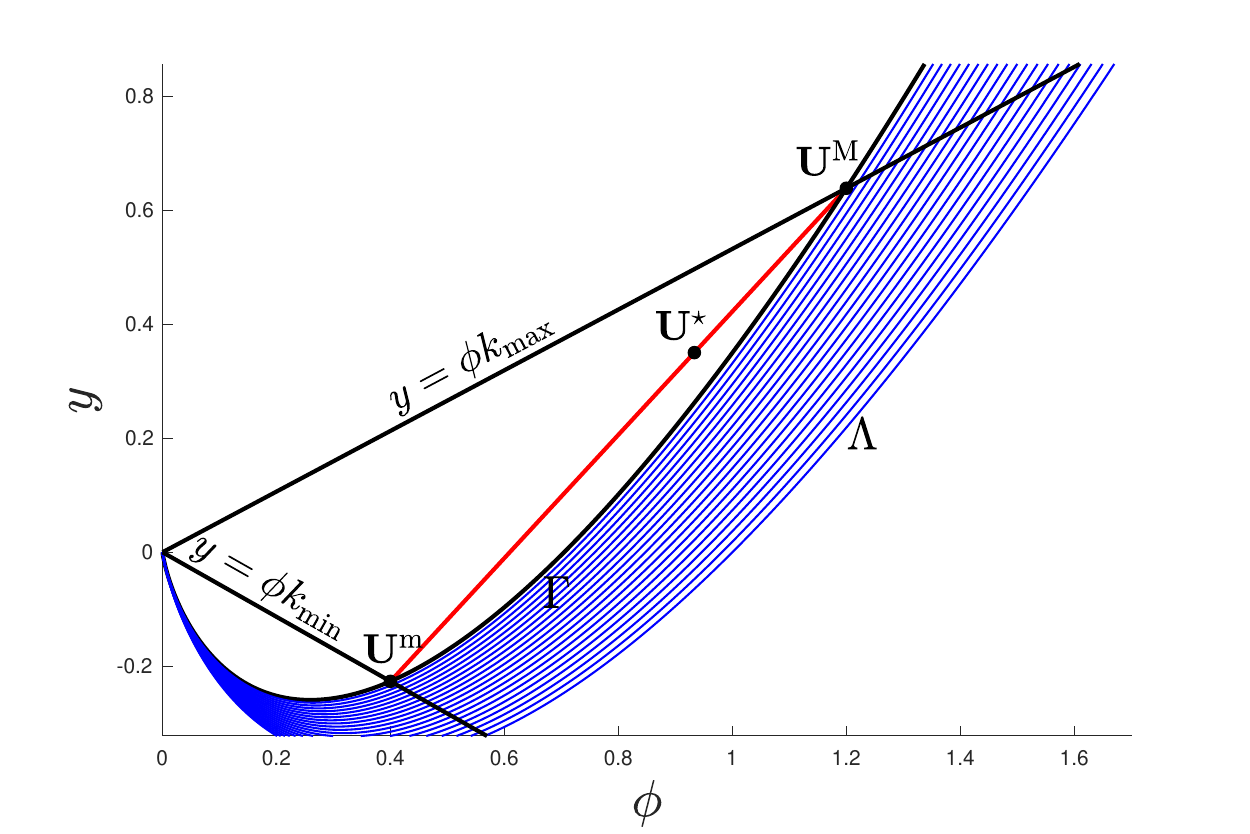}}}     
		\caption{\sf The $y (v, \phi)$ curves in Theorem \ref{contradiction}.}
		\label{z_curves}
	\end{center}
\end{figure}
\begin{proof}
After the first updates, a conservative and consistent scheme will yield
\begin{equation*}
    \mathbf{U}_S^1 = \mathbf{U}_L - \frac{\Delta t_0}{\Delta x} \left( \widehat{\mathbf{F}}^0_{S + \frac{1}{2}} - v^{\dag} \mathbf{U}_L \right), \quad \mathbf{U}_{S + 1}^1 = \mathbf{U}_R - \frac{\Delta t_0}{\Delta x} \left( v^{\dag} \mathbf{U}_R - \widehat{\mathbf{F}}^0_{S + \frac{1}{2}} \right).
\end{equation*}
With the definition $\lambda := \left( 1 + \Delta t_0 v^\dag / \Delta x \right) / 2$ , we note that
\begin{equation}
	\label{equal}
	\frac{1}{2} \left( \mathbf{U}_{S}^1 + \mathbf{U}_{S + 1}^1 \right) = \lambda \mathbf{U}_L + (1 - \lambda) \mathbf{U}_R =: \mathbf{U}^\star.
\end{equation}
In Figure \ref{z_curves}, we plot the curve $\Gamma: y \left( v^\dag, \phi\right)$, where $\mathbf{U}^{\rm M}$ indicates $\mathbf{U}_L$ or $\mathbf{U}_R$ which are more dense (larger $\phi$) and $\mathbf{U}^{\rm m}$ indicates which are less. For such Riemann problem, we note that the line passing through the origin and $\mathbf{U}^{\rm M}$ ($\mathbf{U}^{\rm m}$) stands for the bound $k_{\rm max}^{\rm G}$ ($k_{\rm min}^{\rm G}$). To satisfy both conditions $\mathbf{U}_{S}^1, \mathbf{U}_{S + 1}^1 \in \mathcal{I}^{\rm G}$ and $v(\mathbf{U}_{S}^1), v (\mathbf{U}_{S + 1}^1) \in [v_{\rm min}^{\rm G}, v_{\rm max}^{\rm G}] = \{ v^\dag \}$, points $\mathbf{U}_{S}^1$ and $\mathbf{U}_{S + 1}^1$ can only be in the $[\phi^{\rm m}, \phi^{\rm M}]$ section of curve $\Gamma$. Since the curve is convex, the only possibilities for the updates are $(\mathbf{U}_{S}^1, \mathbf{U}_{S + 1}^1) = (\mathbf{U}^{\rm m}, \mathbf{U}^{\rm M})$ or $(\mathbf{U}_{S}^1, \mathbf{U}_{S + 1}^1) = (\mathbf{U}^{\rm M}, \mathbf{U}^{\rm m})$, which contradicts \eqref{equal} since $\lambda > 1/2$.

Now, we are going to state the upper bound $v_{\rm max}^{\rm G}$ must be destroyed. As shown in Figure \ref{z_curves}, we plot the family of curves $y(v, \phi)$ where $v < v^\dag = v_{\rm max}^{\rm G}$. Suppose that both $v(\mathbf{U}_{S}^1)$ and $ v (\mathbf{U}_{S + 1}^1)$ are less than or equal to $v_{\rm max}^{\rm G}$ and define the right region bounded by the lines $k_{\rm min}^{\rm G}$, $k_{\rm max}^{\rm G}$ and the $[\phi^{\rm m}, \phi^{\rm M}]$ section of curve $\Gamma$ as $\Lambda$. Denote by $\Lambda^o$ the region $\Lambda$ with points $\mathbf{U}^{\rm m}$ and $\mathbf{U}^{\rm M}$ removed. Clearly, in region $\Lambda^o$, we cannot find points $\mathbf{U}_{S}^1$ and $\mathbf{U}_{S + 1}^1$ such that the line segment passing through these two points intersects the $\mathbf{U}^{\rm M} \mathbf{U}^{\rm m}$ segment. Then, the upper bound $v_{\rm max}^{\rm G}$ must be destroyed.
\end{proof}
Theorem \ref{contradiction} highlights the difficulty of designing a BP, conservative and consistent scheme for the Temple-class system on a fixed mesh.  Based on a fixed mesh, \cite{chalons2007transport,chalons2008godunov,betancourt2018random} utilize an averaging step to subtly modify Godunov’s approach and adopt remapping strategies to achieve a statistically conservative property. This scheme is not strictly conservative and maintains only first-order accuracy. To achieve these properties, we relax the ``fixed mesh'' condition and consider the more general case of a ``moving mesh''. Following \cite{duan2021entropy}, we derive the Temple-class system in curvilinear coordinates. Let $\mathcal{D}_{p}$ denote the physical domain of system \eqref{TC}, and $\mathcal{D}_c$ represent the computational domain with the artificially chosen coordinate $\xi$ , used to facilitate movement. The adaptive moving meshes for $\mathcal{D}_p$ are generated as the images of a reference mesh in $\mathcal{D}_c$ through a time-dependent, differentiable, one-to-one coordinate mapping $x = x (\xi, t)$, which can be expanded as
\begin{equation*}
	t = \tau, \quad x = x(\xi, \tau), \quad \xi \in \mathcal{D}_c,
\end{equation*}
with the determinant of the Jacobian matrix
\begin{equation*}
	J := \left\vert \frac{\partial (t, x)}{\partial (\tau, \xi)} \right\vert = \frac{\partial x}{\partial \xi}.
\end{equation*}
Then, under such a  transformation, the system \eqref{TC} can be written as
\begin{equation}\label{newsystem}
    \partial_\tau \boldsymbol{\mathcal{U}} + \partial_\xi \mathbf{G}(\boldsymbol{\mathcal{U}}, c) = 0,
\end{equation}
with
\begin{equation*}
    \begin{aligned}
        \boldsymbol{\mathcal{U}} = (J\phi, Jy, J)^\top, \quad \mathbf{G}(\boldsymbol{\mathcal{U}},c) = \left((v - c)\phi, (v - c)y, - c\right)^\top
    \end{aligned}
\end{equation*}
where $c := \partial x / \partial t$ represents the rate of change of the physical domain with respect to time. It should be noted that the last equation in \eqref{newsystem} represents the volume conservation law from the geometric conservation laws, and the surface conservation law can be omitted in this one-dimensional system.

\begin{remark}
	If $c \equiv 1$ and $J \equiv 1$ are assumed, then the system described by \eqref{newsystem} simplifies to the original system \eqref{TC}.
\end{remark}
In this paper, we consider the following invariant domains of the system \eqref{newsystem}:
\begin{align}
	\Omega &= \left\{\boldsymbol{\mathcal{U}} \in \mathbb{R}^3: J > \epsilon, \phi \in (0, 1), k \in [k_{\min}, k_{\max}], v\in [v_{\min}, v_{\max}] \right\}, \label{eq:270} \\
	\Omega_1 &= \left\{\boldsymbol{\mathcal{U}} \in \mathbb{R}^3: J > \epsilon, \phi \in (0, 1), k \in [k_{\min}, k_{\max}] \right\}, \nonumber \\
	\Omega_2 &= \left\{\boldsymbol{\mathcal{U}} \in \mathbb{R}^3: J > \epsilon, \phi > 0, v \geq v_{\min} \right\}. \nonumber
\end{align}
Here, we ensure that \(J\) remains larger than a small positive number \(\epsilon\), to prevent distortion of the physical domain \(\mathcal{D}_p\). It is important to note that the primary difference between systems \eqref{TC} and \eqref{newsystem} is the additional variable $c$ in the flux. Therefore, finding a suitable $c$ is crucial for obtaining a BP scheme. In the following theorem, we show the preperties of above invariant domains and identify the infeasible region for $c$.
\begin{lemma}
\label{778}
	The invariant domains $\Omega_1$ and $\Omega_2$ are convex sets, while $\Omega$ is nonconvex.
\end{lemma}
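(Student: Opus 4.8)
The plan is to rewrite every defining condition directly in terms of the conservative variables $\boldsymbol{\mathcal{U}} = (\mathcal{U}_1,\mathcal{U}_2,\mathcal{U}_3)^\top = (J\phi,\,Jy,\,J)^\top$, for which $\phi = \mathcal{U}_1/\mathcal{U}_3$, $y = \mathcal{U}_2/\mathcal{U}_3$, $k = \mathcal{U}_2/\mathcal{U}_1$, and $v$ is recovered from $y = y(v,\phi)$ --- well defined because, by Assumption~\ref{assum1}, $v \mapsto y(v,\phi)$ is strictly increasing for each fixed $\phi$. In particular $v \ge v_{\min} \Leftrightarrow y \ge g(\phi)$ and $v \le v_{\max} \Leftrightarrow y \le h(\phi)$, where $g(\phi) := y(v_{\min},\phi)$ and $h(\phi) := y(v_{\max},\phi)$ are strictly convex on $\phi > 0$. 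For $\Omega_1$ I would simply observe that on the region $\mathcal{U}_3 > \epsilon > 0$ each constraint becomes affine: $J > \epsilon$ is $\mathcal{U}_3 > \epsilon$; $\phi \in (0,1)$ is $\mathcal{U}_1 > 0$ together with $\mathcal{U}_1 - \mathcal{U}_3 < 0$; and $k \in [k_{\min},k_{\max}]$ is $\mathcal{U}_2 - k_{\min}\mathcal{U}_1 \ge 0$ together with $\mathcal{U}_2 - k_{\max}\mathcal{U}_1 \le 0$. Thus $\Omega_1$ is a finite intersection of half-spaces, hence convex.

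For $\Omega_2$ the only nonlinear condition is $v \ge v_{\min}$, equivalently $\mathcal{U}_2 \ge \mathcal{U}_3\, g(\mathcal{U}_1/\mathcal{U}_3)$. Here $(\mathcal{U}_1,\mathcal{U}_3) \mapsto \mathcal{U}_3\, g(\mathcal{U}_1/\mathcal{U}_3)$ is the perspective of the convex function $g$, hence convex on $\{\mathcal{U}_3 > 0\}$; so this condition defines the epigraph of a convex function and is convex, and intersecting with the half-spaces $\mathcal{U}_3 > \epsilon$ and $\mathcal{U}_1 > 0$ preserves convexity. (If one prefers to avoid citing the perspective-function fact, the same follows by a direct check: for two admissible points let $J$ be the third component of their convex combination, rewrite the first two components as a convex combination of $(\phi_i,y_i)$ with re-normalized weight $\mu = \theta J_1/J$, and apply Jensen's inequality to $g$.) Hence $\Omega_2$ is convex.

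For the nonconvexity of $\Omega$, note that $\Omega$ equals $\Omega_1$ intersected with $g(\phi) \le y \le h(\phi)$, and the obstruction is the upper bound $y \le h(\phi)$ with $h$ strictly convex --- a ``below-the-graph'' region. I would make this explicit by choosing, under the natural nondegeneracy of the bounds ($v_{\min} \le v_{\max}$ and a $k$-window wide enough that the curve $\phi \mapsto (\phi,h(\phi))$ meets $k \in [k_{\min},k_{\max}]$ on a nonempty $\phi$-interval), two distinct abscissas $\phi_1 \ne \phi_2$ in that interval and the points $\boldsymbol{\mathcal{U}}^{(i)} := (J_0\phi_i,\,J_0 h(\phi_i),\,J_0)^\top \in \Omega$ for a fixed $J_0 > \epsilon$. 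Their midpoint is $(J_0\bar\phi,\,J_0\bar y,\,J_0)^\top$ with $\bar\phi = \tfrac12(\phi_1+\phi_2)$ and $\bar y = \tfrac12\big(h(\phi_1)+h(\phi_2)\big) > h(\bar\phi)$ by strict convexity, so at that point $v > v_{\max}$ and the midpoint lies outside $\Omega$. Therefore $\Omega$ is nonconvex.

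I expect the $\Omega_2$ step to be the crux: the passage from the convex epigraph $\{(\phi,y): y \ge g(\phi)\}$ in the $(\phi,y)$-plane to its homogenization $\{(J\phi,Jy,J): J > \epsilon\}$ in $\boldsymbol{\mathcal{U}}$-space is precisely where convexity could plausibly fail, and where genuine convex analysis (the perspective-function lemma), rather than elementary half-space intersection, is needed; everything else is bookkeeping, the one remaining subtlety being to record the nondegeneracy hypothesis on the bounds under which $\Omega$ is full-dimensional, so that ``nonconvex'' is meaningful.
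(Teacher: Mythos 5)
Your proof is correct, and its mathematical core coincides with the paper's: the key facts in both are that (i) the ratios $\phi$, $k$, $y$ of a convex combination of states are themselves convex combinations of the endpoint ratios with re-normalized weights, (ii) $v \ge v_{\min}$ is equivalent to lying above the convex curve $\phi \mapsto y(v_{\min},\phi)$ so Jensen's inequality closes $\Omega_2$, and (iii) the strict convexity of $\phi \mapsto y(v_{\max},\phi)$ forces midpoints of two states on that curve to overshoot $v_{\max}$. The differences are in packaging. For $\Omega_1$ you observe that every constraint is affine in $\boldsymbol{\mathcal{U}}=(J\phi,Jy,J)^\top$ once $J>\epsilon>0$, so $\Omega_1$ is an intersection of half-spaces; the paper instead verifies convexity by hand with the weights $\theta_\phi$, $\theta_k$ --- your route is shorter and makes the linearity of the constraints transparent. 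For $\Omega_2$ you invoke the perspective-function lemma ($(\mathcal{U}_1,\mathcal{U}_3)\mapsto \mathcal{U}_3\,g(\mathcal{U}_1/\mathcal{U}_3)$ is convex for convex $g$), which is exactly the named version of the paper's explicit Jensen computation; you correctly identify this as the one genuinely non-affine step. For nonconvexity, the paper takes the degenerate case $v_{\min}\equiv v_{\max}$ so that any two states with $\phi_0\neq\phi_1$ serve as a counterexample, whereas you pick two points on the curve $v=v_{\max}$ for general bounds; your version is slightly more general but, as you note, requires the explicit nondegeneracy hypothesis that this curve meets the $k$-window at two distinct abscissas --- a hypothesis the paper's special-case construction also needs implicitly (it requires $k_{\min}<k_{\max}$ for $\phi_0 \neq \phi_1$ to be admissible) but does not state. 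Both arguments are valid; flagging the nondegeneracy condition is a small improvement in rigor over the paper.
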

\begin{proof}
	For any two states $\boldsymbol{\mathcal{U}}_0 = (J_0\phi_0, J_0y_0, J_0)^\top$ and $\boldsymbol{\mathcal{U}}_1 = (J_1\phi_1, J_1y_1, J_1)^\top \in \Omega_1$, we denote 
	\begin{equation*}
		\boldsymbol{\mathcal{U}}_\theta := (J_\theta\phi_\theta, J_\theta y_\theta, J_\theta)^\top = (1 - \theta)\boldsymbol{\mathcal{U}}_0 + \theta\boldsymbol{\mathcal{U}}_1
	\end{equation*}
	with any $\theta \in [0, 1]$. We will prove that $\boldsymbol{\mathcal{U}}_\theta \in \Omega_1$ in the following. Obviously, the Jacobian matrix satisfy $J_\theta = (1 - \theta)J_0 + \theta J_1 \geq \min \{J_0, J_1\} > \varepsilon$. In the given definitions 
	\begin{equation*}
		\theta_\phi := \frac{\theta J_1}{(1 - \theta)J_0+ \theta J_1}, \quad \theta_k := \frac{\theta J_1 \phi_1}{(1 - \theta)J_0\phi_0 + \theta J_1\phi_1},
	\end{equation*}
	where both variables lie within the interval $[0, 1]$, we derive the folloing equations:
	\begin{equation*}
		\begin{aligned}
			\phi_\theta &= \frac{J_\theta \phi_\theta}{J_\theta} = \frac{(1 - \theta)J_0\phi_0 + \theta J_1\phi_1}{(1 - \theta)J_0+ \theta J_1} \\
			& = (1 - \theta_\phi) \phi_0 + \theta_\phi \phi_1 \in \left[\min \{\phi_0, \phi_1\}, \max \{\phi_0, \phi_1\} \right] \subseteq (0, 1),\\
			k_\theta &= \frac{J_\theta y_\theta}{J_\theta \phi_\theta} = \frac{(1 - \theta)J_0 y_0 + \theta J_1 y_1}{(1 - \theta)J_0\phi_0 + \theta J_1\phi_1} \\
			&= (1 - \theta_k) k_0 + \theta_k k_1 \in \left[\min \{k_0, k_1\}, \max \{k_0, k_1\} \right] \subseteq [k_{\min}, k_{\max}].
		\end{aligned}
	\end{equation*}
	Thus, $\boldsymbol{\mathcal{U}}_\theta \in \Omega_1$ and $\Omega_1$ is convex.

	Now, let us assume $\boldsymbol{\mathcal{U}}_0$ and $\boldsymbol{\mathcal{U}}_1$ are elements of $\Omega_2$. Our objective is to demonstrate $v_\theta \geq v_{\min}$. Consider a function of $\phi$ defined as $\eta(\phi) := y(v_{\min}, \phi)$. Given that $\partial y / \partial v \geq 0$ as stated in Assumption \ref{assum1}, the condition $v(\boldsymbol{\mathcal{U}}) \geq v_{\min}$ is equivalent to $y \geq \eta(\phi)$. Therefore,
	\begin{equation*}
		\begin{aligned}
			y_\theta &= \frac{J_\theta y_\theta}{J_\theta} = \frac{(1 - \theta)J_0y_0 + \theta J_1y_1}{(1 - \theta)J_0+ \theta J_1}\\ 
			&= (1 - \theta_\phi) y_0 + \theta_\phi y_1 \\
			&\geq (1 - \theta_\phi) \eta(\phi_0) + \theta_\phi \eta(\phi_1) \\
			&\geq \eta((1 - \theta_\phi) \phi_0 + \theta_\phi \phi_1)\\
			& = \eta(\phi_\theta),
		\end{aligned}
	\end{equation*}
	where the convexity of $\eta$ as specified in Assumption \ref{assum1} has been utilized. Thus, we conclude that $\boldsymbol{\mathcal{U}}_\theta \in \Omega_2$ and $\Omega_2$ is convex.

	To show that the set $\Omega$ is nonconvex, we assume $\boldsymbol{\mathcal{U}}_0$ and $\boldsymbol{\mathcal{U}}_1 \in \Omega$, and we consider a special case where $v_{\min} \equiv v_{\max}$ and $\phi_0 \neq \phi_1$. Given the strict convexity of $\eta(\phi)$, for all $\theta \in (0, 1)$, results $y_\theta > \eta(\phi_\theta) = y(v_{\max}, \phi_\theta)$, implying $v_{\theta} > v_{\max}$. Consequently, $\boldsymbol{\mathcal{U}}_\theta \notin \Omega$ and the set $\Omega$ is nonconvex.

	The proof is completed.
\end{proof}

\begin{theorem}
\label{infeasible}
	For the system \eqref{newsystem}, it is impossible to construct a stable BP numerical scheme when $c \neq v$.
\end{theorem}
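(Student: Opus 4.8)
The plan is to argue by contradiction, reusing for the transformed system \eqref{newsystem} the contact Riemann problem that drove Theorem \ref{contradiction}. Suppose a conservative, consistent and stable scheme for \eqref{newsystem} were bound-preserving for $\Omega$ while using, at some interface and for some data, a mesh velocity $c$ different from the local fluid velocity $v$. Near that interface I would impose the Riemann datum of Theorem \ref{contradiction} --- $\mathbf{U}_L\neq\mathbf{U}_R$ with $v_L=v_R=v^{\dag}$ and $\phi_L,\phi_R\in(0,1)$, $\phi_L\neq\phi_R$ --- together with a uniform reference mesh $J\equiv J_0$. For this datum the (local) Riemann-invariant bounds degenerate to $v_{\min}=v_{\max}=v^{\dag}$, so, using the strict convexity of $\eta(\phi):=y(v^{\dag},\phi)$ from Assumption \ref{assum1} exactly as in Lemma \ref{778}, the set of admissible primitive states collapses to the $[\phi^{\rm m},\phi^{\rm M}]$ section of the strictly convex curve $\Gamma:\ y=\eta(\phi)$, and bound preservation forces both updated cell states onto this arc.

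I would then perform one conservative step. The third component of $\mathbf{G}$ is $-c$, so the volume-conservation law controls the evolution of the cell Jacobians; when $c$ is (locally) constant it gives $J_S^1=J_{S+1}^1=J_0$, and the first two components reproduce identity \eqref{equal} with $v^{\dag}$ replaced by the relative speed $v^{\dag}-c$:
\[
	\frac{1}{2}(\mathbf{U}_S^1+\mathbf{U}_{S+1}^1)=\lambda\,\mathbf{U}_L+(1-\lambda)\,\mathbf{U}_R,\qquad
	\lambda:=\frac{1}{2}\left(1+\frac{\Delta\tau\,(v^{\dag}-c)}{J_0\,\Delta\xi}\right),
\]
so $\lambda=\frac{1}{2}$ exactly when $c=v^{\dag}=v$. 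The strict-convexity argument of Theorem \ref{contradiction} then applies verbatim: two points of a strictly convex arc whose average lies on the chord through its endpoints must coincide with those endpoints, forcing $\lambda\in\{0,1/2,1\}$; the stability condition (a strict CFL bound) rules out $\lambda=0$ and $\lambda=1$, leaving $\lambda=\frac{1}{2}$, i.e.\ $c=v$. For a non-constant mesh velocity one tracks $J_S^1,J_{S+1}^1$ through the volume-conservation law, divides the updated conservative averages by them, and obtains a $J$-weighted analogue of the identity above; bound preservation --- which requires $J_S^1,J_{S+1}^1>\epsilon>0$ --- again confines the weights to $(0,1)$, and the same convexity dichotomy forces the interface mesh velocity to equal $v$, once the spurious ``state-swapped'' update $(\mathbf{U}_S^1,\mathbf{U}_{S+1}^1)=(\mathbf{U}_R,\mathbf{U}_L)$, which reverses the physical ordering of the contact and is incompatible with consistency, has been excluded.

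Collecting these steps shows that a stable, conservative, consistent and bound-preserving scheme for \eqref{newsystem} must take $c=v$ at every interface; conversely, when $c=v$ the first two components of $\mathbf{G}$ vanish identically, so the scheme leaves the contact datum untouched and is trivially bound-preserving, which is why the statement cannot be sharpened. I expect the genuine obstacle to be not the geometry --- a direct reuse of Theorem \ref{contradiction} and Lemma \ref{778} --- but making the reduction to the (weighted) averaging identity rigorous for an adaptive, non-uniform mesh: one must follow the cell Jacobians through the geometric conservation law, verify that under the CFL condition only the two cells adjacent to the jump are altered in one step, keep the averaging weights strictly inside $(0,1)$, and formalize the notion of ``stable/consistent scheme'' sharply enough to discard the state-swapped alternative.
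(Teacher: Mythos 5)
Your argument is a legitimate route to the conclusion in the setting you actually control (consistent compact-stencil flux, locally constant $c$, uniform initial $J$), but it is not the paper's route. You transplant the two-cell conservation/averaging identity of Theorem \ref{contradiction} across the interface and let the strict convexity of the arc $y=\eta(\phi)$ pin both updated states to the endpoints, forcing $\lambda=\tfrac12$ and hence $c=v^{\dag}$. The paper instead runs a \emph{single-cell} argument in the conserved variables: it fixes the numerical fluxes, observes that the update $\boldsymbol{\mathcal{U}}_S^{1}(\Delta\tau)$ is affine in $\Delta\tau$ (identity \eqref{76}), so that shrinking the time step traces the straight segment from $\boldsymbol{\mathcal{U}}_S^{0}$ to $\boldsymbol{\mathcal{U}}_S^{1}(\Delta\tau_0)$, and then chooses data with $v\equiv v_{\min}\equiv v_{\max}$ for which the flux differences change $\phi$ in cell $S$ (condition \eqref{90}, which is achievable precisely because $c\neq v$ makes the first flux component nonzero). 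Both endpoints of that segment then sit on the non-convex ridge of $\Omega$ with distinct $\phi$, so by the last part of Lemma \ref{778} every interior point violates $v\le v_{\max}$ --- contradicting the requirement that a stable scheme stay BP for all smaller $\Delta\tau$. Note the two proofs also use ``stability'' differently: you use it as a strict CFL bound to exclude $\lambda\in\{0,1\}$, the paper uses it as ``BP persists under time-step reduction,'' which is what powers its convexity argument.

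The substantive gap is exactly the one you flag at the end, and it is not a technicality you can wave away: your reduction to the ($J$-weighted) averaging identity needs consistency, a two-point (or at least compact-stencil) flux so that only the cells adjacent to the jump move, $c$ locally constant so the geometric conservation law keeps $J_S^1=J_{S+1}^1$, and a separate treatment of non-uniform $J$ --- none of which is established for a general ``stable BP scheme,'' and the weighted version of the convexity dichotomy with $J_S^1\neq J_{S+1}^1$ is asserted rather than proved. The paper's one-cell construction makes all of this moot: it never sums across the interface, never needs the outer fluxes to be exact, and handles arbitrary $J$ because it works with the ratios $\phi=(J\phi)/J$ directly. (Minor point: your worry about excluding the ``state-swapped'' update is unnecessary --- the swapped pair has the same average and still forces $\lambda=\tfrac12$, i.e.\ $c=v^{\dag}$, so it does not threaten the contradiction.) If you want your route to stand as a complete proof of the theorem as stated, you must either close the non-uniform/non-constant case or restrict the theorem's hypotheses; alternatively, switching to the time-step--perturbation argument removes the obstruction entirely.
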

\begin{proof}
	The numerical scheme for system \eqref{newsystem} can be described by:
	\begin{equation*}
		\boldsymbol{\mathcal{U}}_j^{n + 1} = \boldsymbol{\mathcal{U}}_j^n - \frac{\Delta \tau_n}{\Delta \xi} \left( \widehat{\mathbf{G}}_{j + \frac{1}{2}}^n - \widehat{\mathbf{G}}_{j - \frac{1}{2}}^n \right),
	\end{equation*}
	where $\widehat{\mathbf{G}}_{j \pm \frac{1}{2}}^n$ represents the numerical flux. Note that $\boldsymbol{\mathcal{U}}_j^{n + 1}$ can be regarded as a function of $\Delta \tau_n$, namely, $\boldsymbol{\mathcal{U}}_j^{n + 1} (\Delta \tau_n)$, if the computation of numerical flux has been fixed. Given that $c \neq v$, we can construct a Riemann initial data
	\begin{equation*}
		\boldsymbol{\mathcal{U}}^0 (\xi) = \left\{
		\begin{aligned}
			& \boldsymbol{\mathcal{U}}_L \quad {\rm for} \; \xi \leq \xi_{S + \frac{1}{2}}, \\
			& \boldsymbol{\mathcal{U}}_R \quad {\rm otherwise},
		\end{aligned}
		\right.
	\end{equation*}
	satisfies
	\begin{equation}
	\label{90}
		v \left(\boldsymbol{\mathcal{U}}^0 \right) \equiv v_{\min} \equiv v_{\max}, \quad \frac{\widehat{\mathbf{G}}_{S - \frac{1}{2}}^{0;(1)} - \widehat{\mathbf{G}}_{S + \frac{1}{2}}^{0;(1)}}{\widehat{\mathbf{G}}_{S - \frac{1}{2}}^{0;(3)} - \widehat{\mathbf{G}}_{S + \frac{1}{2}}^{0;(3)}} \neq \frac{0}{0} \; {\rm and} \; \neq \phi \left( \boldsymbol{\mathcal{U}}_S^{0} \right). 
	\end{equation}
	Assuming both $\boldsymbol{\mathcal{U}}_S^{1}(\Delta \tau_0)$ and $\boldsymbol{\mathcal{U}}_S^{0}$ are contained in $\Omega$, then $v \left(\boldsymbol{\mathcal{U}}_S^{1}(\Delta \tau_0) \right) \equiv v_{\min} \equiv v_{\max}$ . Noting that a stable BP numerical scheme should ensure $\boldsymbol{\mathcal{U}}_S^{1}(\Delta \tau) \in \Omega$ for any $\Delta \tau \in (0, \Delta \tau_0)$. Since
	\begin{equation*}
		\phi \left( \boldsymbol{\mathcal{U}}_S^{1}(\Delta \tau) \right) = \frac{\boldsymbol{\mathcal{U}}_S^{1;(1)}(\Delta \tau)}{\boldsymbol{\mathcal{U}}_S^{1;(3)}(\Delta \tau)} = \frac{\boldsymbol{\mathcal{U}}_S^{0;(1)} - \frac{\Delta \tau}{\Delta \xi} \left( \widehat{\mathbf{G}}_{S + \frac{1}{2}}^{0;(1)} - \widehat{\mathbf{G}}_{S - \frac{1}{2}}^{0;(1)} \right)}{\boldsymbol{\mathcal{U}}_S^{0;(3)} - \frac{\Delta \tau}{\Delta \xi} \left( \widehat{\mathbf{G}}_{S + \frac{1}{2}}^{0;(3)} - \widehat{\mathbf{G}}_{S - \frac{1}{2}}^{0;(3)} \right)},
	\end{equation*}
	we emphasize that within the constraint $\Delta \tau > 0$, the sole condition under which 
	\begin{equation*}
		\phi \left( \boldsymbol{\mathcal{U}}_S^{1}(\Delta \tau) \right) = \boldsymbol{\mathcal{U}}_S^{0;(1)} / \boldsymbol{\mathcal{U}}_S^{0;(3)} = \phi \left( \boldsymbol{\mathcal{U}}_S^{0} \right),
	\end{equation*}
	presents a contradiction to \eqref{90}. Therefore, $\phi \left( \boldsymbol{\mathcal{U}}_S^{1}(\Delta \tau_0) \right) \neq \phi \left( \boldsymbol{\mathcal{U}}_S^{0} \right)$. Clearly, $\boldsymbol{\mathcal{U}}_S^{1}(\Delta \tau) $can be expressed as 
	\begin{equation}
	\label{76}
		\begin{aligned}
			\boldsymbol{\mathcal{U}}_S^{1}(\Delta \tau) 
			&= \boldsymbol{\mathcal{U}}_S^0 - \frac{\Delta \tau}{\Delta \xi} \left( \widehat{\mathbf{G}}_{j + \frac{1}{2}}^0 - \widehat{\mathbf{G}}_{j - \frac{1}{2}}^0 \right) \\
			&= \frac{\Delta \tau}{\Delta \tau_0} \left( \boldsymbol{\mathcal{U}}_S^0 - \frac{\Delta \tau_0}{\Delta \xi} \left( \widehat{\mathbf{G}}_{j + \frac{1}{2}}^0 - \widehat{\mathbf{G}}_{j - \frac{1}{2}}^0 \right)\right) 
			+ \left( 1 - \frac{\Delta \tau}{\Delta \tau_0}\right)\boldsymbol{\mathcal{U}}_S^0\\
			&=\frac{\Delta \tau}{\Delta \tau_0}\boldsymbol{\mathcal{U}}_S^{1}(\Delta \tau_0) + \left( 1 - \frac{\Delta \tau}{\Delta \tau_0}\right) \boldsymbol{\mathcal{U}}_S^{0}
		\end{aligned} 
	\end{equation}
	From $\phi \left( \boldsymbol{\mathcal{U}}_S^{1}(\Delta \tau_0) \right) \neq \phi \left( \boldsymbol{\mathcal{U}}_S^{0} \right)$, $v \left( \boldsymbol{\mathcal{U}}_S^{1}(\Delta \tau_0) \right) = v \left( \boldsymbol{\mathcal{U}}_S^{0} \right) = v_{\min} = v_{\max}$, and \eqref{76}, we deduce $v \left(\boldsymbol{\mathcal{U}}^1_S (\Delta \tau) \right) > v_{\max}$ as the last part of the proof of Lemma \ref{778}. Therefore, it is impossible to construct a stable BP numerical scheme when $c \neq v$ for the system \eqref{newsystem}.
\end{proof}

\section{First-order BP scheme}\label{0sec3}

We implement a spatial discretization for the domain $[\xi_L, \xi_R]$ as follows:
\begin{equation*}
	\xi_L = \xi_{\frac{1}{2}} < \xi_{\frac{3}{2}} \cdots < \xi_{N_\xi + \frac{1}{2}} = \xi_R,
\end{equation*}
where cell $I_j = [\xi_{j - \frac{1}{2}}, \xi_{j + \frac{1}{2}}]$ has mesh size $\Delta \xi = \frac{\xi_R - \xi_L}{N_\xi}$ and center $\xi_{j} = \frac{1}{2} \left(\xi_{j - \frac{1}{2}} + \xi_{j + \frac{1}{2}} \right)$. The time interval is defined as $\tau_{n + 1} = \tau_n + \Delta \tau_n$, where $\Delta \tau_n$ is the time step at time level $\tau_n$. Let $\boldsymbol{\mathcal{U}}_j^{n}$ denote the solution at grid point $\xi_j$ and time $\tau_n$. From Theorem \ref{infeasible}, it is a straightforward approach to design a first-order BP scheme for system \ref{newsystem} by setting $c \equiv v$. Define $\lambda = \Delta \tau_n / \Delta \xi$ and $c_j^n = v_j^n$. We design the following first-order scheme:
\begin{equation}\label{1stscheme}
    \boldsymbol{\mathcal{U}}_j^{n + 1} = \boldsymbol{\mathcal{U}}_j^{n} - \lambda \left(\widehat{\mathbf{g}}_{j + \frac{1}{2}}^* - \widehat{\mathbf{g}}_{j - \frac{1}{2}}^*\right) \Leftrightarrow \left\{
    \begin{aligned}
        (J \phi)_j^{n + 1} &= (J \phi)_j^{n}, \\
        (J y)_j^{n + 1} &= (J y)_j^{n}, \\
        J_j^{n + 1} &= J_j^n + \lambda ( v_{j + 1}^n - v_j^n ),
    \end{aligned}
    \right.
\end{equation}
where 
\begin{equation}
\label{g}
	\widehat{\mathbf{g}}_{j + \frac{1}{2}}^* = (0, 0, - v^n_{j + 1})^\top, \quad \widehat{\mathbf{g}}_{j - \frac{1}{2}}^* = (0, 0, - v^n_{j})^\top.
\end{equation}
We will now explore the feasibility of ensuring BP for this scheme \eqref{1stscheme}.
\begin{theorem}
\label{TH8}
	Assume that $\boldsymbol{\mathcal{U}}_j^n \in \Omega_1$, then $0 < \Delta \tau_n < \Delta \tau_n^{*}$ provided that $\boldsymbol{\mathcal{U}}_j^{n + 1} \in \Omega_1$, with 
	\begin{equation*}
	    \Delta \tau_n^{*} = \left\{ 
	    \begin{aligned}
	        &\frac{\Delta \xi }{v_j^n - v_{j + 1}^n} \min \{J_j^n - \epsilon, J_j^n (1 - \phi_j^n)\}, \quad &&{\rm if} \; v_j^n > v_{j + 1}^n,\\
	        &+ \infty \quad &&{\rm otherwise}.
	    \end{aligned}
	    \right.
	\end{equation*}
\end{theorem}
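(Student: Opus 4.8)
The plan is to exploit the very special structure of the update \eqref{1stscheme}, in which the first two components of $\boldsymbol{\mathcal{U}}_j$ are frozen and only the geometric variable $J$ evolves, via $J_j^{n+1}=J_j^n+\lambda(v_{j+1}^n-v_j^n)$. The first observation I would make is that the scheme leaves the Riemann invariant $k$ unchanged: since $(Jy)_j^{n+1}=(Jy)_j^n$ and $(J\phi)_j^{n+1}=(J\phi)_j^n$, one has $k_j^{n+1}=(Jy)_j^{n+1}/(J\phi)_j^{n+1}=k_j^n\in[k_{\min},k_{\max}]$, so the $k$-constraint in the definition of $\Omega_1$ is automatically inherited from the hypothesis $\boldsymbol{\mathcal{U}}_j^n\in\Omega_1$. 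Hence $\boldsymbol{\mathcal{U}}_j^{n+1}\in\Omega_1$ is equivalent to the two scalar requirements $J_j^{n+1}>\epsilon$ and $\phi_j^{n+1}\in(0,1)$, where $\phi_j^{n+1}=(J\phi)_j^n/J_j^{n+1}=J_j^n\phi_j^n/J_j^{n+1}$.

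Next I would branch on the sign of $v_{j+1}^n-v_j^n$. If $v_j^n\le v_{j+1}^n$, then $J_j^{n+1}\ge J_j^n>\epsilon$ for every $\Delta\tau_n>0$, and $0<\phi_j^{n+1}=J_j^n\phi_j^n/J_j^{n+1}\le\phi_j^n<1$; thus $\boldsymbol{\mathcal{U}}_j^{n+1}\in\Omega_1$ with no restriction on $\Delta\tau_n$, which matches $\Delta\tau_n^*=+\infty$. If instead $v_j^n>v_{j+1}^n$, set $\delta:=v_j^n-v_{j+1}^n>0$, so that $J_j^{n+1}=J_j^n-\lambda\delta$ decreases with $\Delta\tau_n$. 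Here $J_j^{n+1}>\epsilon$ is equivalent to $\lambda\delta<J_j^n-\epsilon$, i.e. $\Delta\tau_n<\Delta\xi(J_j^n-\epsilon)/\delta$; once this holds, $J_j^{n+1}>\epsilon>0$, so $\phi_j^{n+1}>0$ is automatic, and $\phi_j^{n+1}<1$ is equivalent to $J_j^n\phi_j^n<J_j^{n+1}=J_j^n-\lambda\delta$, i.e. $\Delta\tau_n<\Delta\xi J_j^n(1-\phi_j^n)/\delta$. Intersecting these two constraints gives exactly $0<\Delta\tau_n<\Delta\tau_n^*$, and since every step is an equivalence the argument yields the full characterization: $\boldsymbol{\mathcal{U}}_j^{n+1}\in\Omega_1$ holds if and only if $0<\Delta\tau_n<\Delta\tau_n^*$, in particular the necessity stated in the theorem.

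I do not expect a genuine obstacle here, since the whole proof amounts to a couple of linear inequalities in $\Delta\tau_n$. The only points deserving care are: (i) recording that $k$ is conserved by \eqref{1stscheme}, so that the awkward non-convex part of the invariant domain plays no role in this step; (ii) treating the two bounds on $\phi_j^{n+1}$ separately and noticing that, once $J_j^{n+1}>\epsilon$ has been imposed, only the upper bound $\phi_j^{n+1}<1$ contributes a further restriction; and (iii) tracking the direction of the inequalities when dividing through by the positive quantities $\delta$ and $J_j^{n+1}$. It is also worth remarking explicitly that the $\min$ in $\Delta\tau_n^*$ is nothing but the combination of these two competing restrictions — the $J$-bound binds when $\phi_j^n$ is small and the $\phi$-bound binds when $\phi_j^n$ is close to $1$.
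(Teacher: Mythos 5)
Your proposal is correct and follows essentially the same route as the paper's proof: both arguments observe that the scheme \eqref{1stscheme} freezes $(J\phi)_j$ and $(Jy)_j$ (hence $k_j^{n+1}=k_j^n$), reduce the claim to the two scalar conditions $J_j^{n+1}>\epsilon$ and $\phi_j^{n+1}\in(0,1)$, and extract the two linear restrictions on $\Delta\tau_n$ whose intersection is $\Delta\tau_n^{*}$. Your write-up is slightly more explicit in recording that each step is an equivalence (so the bound is sharp), but the substance is identical.
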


\begin{proof}
	If  $v_j^n > v_{j + 1}^n$, then 
	\begin{equation*}
	    J_j^{n + 1} = J_j^n + \frac{\Delta \tau_n}{\Delta \xi} \left( v_{j + 1}^n - v_j^n \right) > J_j^n + \frac{\Delta \tau_n^{*}}{\Delta \xi} \left( v_{j + 1}^n - v_j^n \right) \geq \epsilon.
	\end{equation*}
	It is straightforward to verify that for all $\Delta \tau_n > 0$, $J_j^{n + 1} > \epsilon$ when $v_j^n \leq v_{j + 1}^n$.
	According to \eqref{1stscheme}, given that $J_j^{n + 1} > \epsilon$ and $J_j^{n} > \epsilon$, we derive
	\begin{equation*}
	    \begin{aligned}
	        \phi_j^{n + 1} &= \frac{(J\phi)_j^{n + 1}}{J_j^{n + 1}} = \frac{J_j^{n}}{J_j^{n + 1}} \phi_j^{n} > 0 \\
	        k_j^{n + 1} &= \frac{(Jy)_j^{n + 1}}{(J\phi)_j^{n + 1}} = \frac{(Jy)_j^{n}}{(J\phi)_j^{n}} = \frac{y_j^n}{\phi_j^n} = k_j^n \in [k_{\min}, k_{\max}].
	    \end{aligned}
	\end{equation*}
	It should be noted that $\phi_j^{n + 1} < 1$ corresponds to the condition:  
	\begin{equation}
	\label{99}
		J_j^n \phi_j^n < J_j^{n + 1} = J_j^n + \frac{\Delta \tau_n}{\Delta \xi} \left( v_{j + 1}^n - v_j^n \right).
	\end{equation}
	Given that $\phi_j^{n} < 1$, \eqref{99} holds for any $\Delta \tau_n > 0$ when $v_{j + 1}^n \geq v_j^n$, and is correct for any $\Delta \tau_n \in (0, \Delta \tau_n^{*})$ when $v_{j + 1}^n < v_j^n$.
	In summary, $\boldsymbol{\mathcal{U}}_j^{n + 1} \in \Omega_1^*$. The proof is completed.
\end{proof}

Since a Temple-class system with varying expressions of $v$ results in different feasible maximum time steps, we will focus solely on the ARZ and sedimentation models in the subsequent discussion.
\begin{theorem}
	{\rm (ARZ model with $\gamma > 0$).} Assume that $\boldsymbol{\mathcal{U}}_j^{n} \in \Omega$ , then $0 < \Delta \tau_n < \min \{\Delta \tau_n^{*}, \Delta \tau_n^{**} \}$ provided that $\boldsymbol{\mathcal{U}}_j^{n + 1} \in \Omega$, with 
	\begin{equation}
	\label{09}
	    \Delta \tau_n^{**} = \left\{ 
	    \begin{aligned}
	        &\frac{\Delta \xi J_j^n (\phi_j^n - \check{\phi}_j^n)}{\check{\phi}_j^n (v_{j + 1}^n - v_{j}^n)}, \quad &&{\rm if} \; v_j^n < v_{j + 1}^n \; {\rm and} \; k_j^n > v_{\max},\\
	        &\frac{\Delta \xi J_j^n (\hat{\phi}_j^n - \phi_j^n)}{\hat{\phi}_j^n (v_{j}^n - v_{j + 1}^n)}, \quad &&{\rm if} \; v_j^n > v_{j + 1}^n,\\
	        &+ \infty \quad &&{\rm otherwise},
	    \end{aligned}
	    \right.
	\end{equation}
	where $\check{\phi}_j^n := \left(\frac{\gamma}{v_{\rm ref}} (k_j^n - v_{\max})\right)^{1 / \gamma}$ and $\hat{\phi}_j^n := \left(\frac{\gamma}{v_{\rm ref}} (k_j^n - v_{\min})\right)^{1 / \gamma}$.
\end{theorem}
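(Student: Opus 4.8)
The plan is to reduce everything to the single remaining constraint $v_j^{n+1}\in[v_{\min},v_{\max}]$ by invoking Theorem~\ref{TH8}. Indeed, $\Omega\subseteq\Omega_1$, and the two domains differ only by the velocity bounds, so the condition $0<\Delta\tau_n<\Delta\tau_n^{*}$ from Theorem~\ref{TH8} already guarantees $J_j^{n+1}>\epsilon$, $\phi_j^{n+1}\in(0,1)$ and $k_j^{n+1}=k_j^n\in[k_{\min},k_{\max}]$; in particular the denominator $J_j^{n+1}=J_j^n+\lambda(v_{j+1}^n-v_j^n)$ (with $\lambda=\Delta\tau_n/\Delta\xi$) is positive. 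I would then read off from \eqref{1stscheme} the explicit update $\phi_j^{n+1}=J_j^n\phi_j^n/J_j^{n+1}$, and, using $k_j^{n+1}=k_j^n$ together with the ARZ pressure $p(\phi)=\tfrac{v_{\rm ref}}{\gamma}\phi^\gamma$, obtain $v_j^{n+1}=k_j^n-\tfrac{v_{\rm ref}}{\gamma}(\phi_j^{n+1})^\gamma$.

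Next I would use that, for $\gamma>0$ and $\phi>0$, the map $\phi\mapsto k-\tfrac{v_{\rm ref}}{\gamma}\phi^\gamma$ is strictly decreasing, which converts the velocity bounds into bounds on $\phi_j^{n+1}$: the inequality $v_j^{n+1}\le v_{\max}$ is equivalent to $\phi_j^{n+1}\ge\check{\phi}_j^n$ when $k_j^n>v_{\max}$ (and is automatic when $k_j^n\le v_{\max}$, the relevant bracket being nonpositive), while $v_j^{n+1}\ge v_{\min}$ is equivalent to $\phi_j^{n+1}\le\hat{\phi}_j^n$. The hypothesis $\boldsymbol{\mathcal{U}}_j^n\in\Omega$ supplies the ``initial'' inclusions $\check{\phi}_j^n\le\phi_j^n\le\hat{\phi}_j^n$. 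A sign analysis of $v_{j+1}^n-v_j^n$ then tells which bound can actually be violated: if $v_{j+1}^n\ge v_j^n$ then $J_j^{n+1}\ge J_j^n$ forces $\phi_j^{n+1}\le\phi_j^n\le\hat{\phi}_j^n$, so only $\phi_j^{n+1}\ge\check{\phi}_j^n$ may fail and only when $k_j^n>v_{\max}$; if $v_{j+1}^n<v_j^n$ then $\phi_j^{n+1}\ge\phi_j^n\ge\check{\phi}_j^n$, so only $\phi_j^{n+1}\le\hat{\phi}_j^n$ may fail; and if $v_{j+1}^n=v_j^n$ then $J_j^{n+1}=J_j^n$, hence $\phi_j^{n+1}=\phi_j^n$ and $v_j^{n+1}=v_j^n$, so nothing is lost. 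This is precisely the case split appearing in \eqref{09}.

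Finally I would substitute $\phi_j^{n+1}=J_j^n\phi_j^n/(J_j^n+\lambda(v_{j+1}^n-v_j^n))$ into the active inequality and solve for $\Delta\tau_n$. For $v_j^n<v_{j+1}^n$ and $k_j^n>v_{\max}$, the requirement $\phi_j^{n+1}\ge\check{\phi}_j^n$ rearranges (multiplying by the positive $J_j^{n+1}$) to $\lambda\,\check{\phi}_j^n(v_{j+1}^n-v_j^n)\le J_j^n(\phi_j^n-\check{\phi}_j^n)$, i.e. $\Delta\tau_n\le\Delta\xi J_j^n(\phi_j^n-\check{\phi}_j^n)/\bigl(\check{\phi}_j^n(v_{j+1}^n-v_j^n)\bigr)$; the case $v_j^n>v_{j+1}^n$ is the mirror image with $\hat{\phi}_j^n$ in place of $\check{\phi}_j^n$, where the positivity of $J_j^{n+1}$ needed before dividing is exactly what $\Delta\tau_n<\Delta\tau_n^{*}$ provides. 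Intersecting with the bound from Theorem~\ref{TH8} gives $0<\Delta\tau_n<\min\{\Delta\tau_n^{*},\Delta\tau_n^{**}\}$, as claimed. There is no genuinely hard step here; the only thing to watch is sign bookkeeping — keeping $J_j^{n+1}>0$ before dividing by it, and remembering that $p$ increasing in $\phi$ makes $v$ decreasing in $\phi$ — so the main (mild) obstacle is organizing the four-way case analysis cleanly.
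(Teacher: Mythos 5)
Your proposal is correct and follows essentially the same route as the paper's proof: invoke Theorem~\ref{TH8} to dispose of the $\Omega_1$ constraints, use $k_j^{n+1}=k_j^n$ and the monotonicity of $v$ in $\phi$ to turn the velocity bounds into $\check{\phi}_j^n\le\phi_j^{n+1}\le\hat{\phi}_j^n$ (the paper phrases this via $f_{\max},f_{\min}\ge 0$, which is the same inequality after multiplying through by $J_j^{n+1}>0$), then do the sign split on $v_{j+1}^n-v_j^n$ and solve for $\Delta\tau_n$. The only cosmetic difference is that the paper works directly with $f_{\max}=v_{\max}-v_j^{n+1}$ and $f_{\min}=v_j^{n+1}-v_{\min}$ rather than with the equivalent density bounds, so no substantive comparison is needed.
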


\begin{proof}
	Since $\boldsymbol{\mathcal{U}}_j^{n} \in \Omega \subset \Omega_1$ and $0 < \Delta \tau_n < \Delta \tau_n^{*}$, we can obtain $\boldsymbol{\mathcal{U}}_j^{n + 1} \in \Omega_1$ from Theorem \ref{TH8}. 
	Note that $k_j^{n + 1} = k_j^{n}$ and define 
	\begin{equation*}
	\begin{aligned}
	f_{\max} &:= v_{\max} - v_j^{n + 1} = v_{\max} - k_j^{n + 1} + \frac{v_{\rm ref}}{\gamma} (\phi_j^{n + 1})^\gamma = v_{\max} - k_j^n + \frac{v_{\rm ref}}{\gamma} \left( \frac{J_j^n}{J_j^{n + 1}} \phi_j^n \right)^\gamma, \\
	f_{\min} &:= v_j^{n + 1} - v_{\min} = k_j^{n + 1} - \frac{v_{\rm ref}}{\gamma} (\phi_j^{n + 1})^\gamma - v_{\min} = k_j^n - \frac{v_{\rm ref}}{\gamma} \left( \frac{J_j^n}{J_j^{n + 1}} \phi_j^n \right)^\gamma - v_{\min}.
	\end{aligned}
	\end{equation*}
	
	{\sf Part I.} Keep $v_j^{n + 1} \leq v_{\max}$, namely, $f_{\max} \geq 0$.

	For any $k_j^n \leq v_{\max}$, we have $f_{\max} \geq 0$ based on the conditions $\phi_j^n$, $J_j^n$, and $J_j^{n + 1} > 0$.

	Considering the case $k_j^n > v_{\max}$, we can get 
	\begin{equation*}
	    \check{\phi}_j^n = \left(\frac{\gamma}{v_{\rm ref}} (k_j^n - v_{\max}) \right)^{1 / \gamma} \leq \left(\frac{\gamma}{v_{\rm ref}} (k_j^n - v_j^n)\right)^{1 / \gamma} = \phi_j^n,
	\end{equation*}
	and 
	\begin{equation*}
	    f_{\max} \geq 0 \Leftrightarrow \frac{J_j^n}{J_j^{n + 1}} \phi_j^n \geq \left(\frac{\gamma}{v_{\rm ref}} (k_j^n - v_{\max})\right)^{1 / \gamma} \Leftrightarrow J_j^n \phi_j^n \geq \left( J_j^n + \frac{\Delta \tau_n}{\Delta \xi} (v_{j + 1}^n - v_j^n) \right) \check{\phi}_j^n.
	\end{equation*}
	Noting that $\check{\phi}_j^n \leq \phi_j^n$, then $f_{\max} \geq 0$ is always true when $v_j^n \geq v_{j + 1}^n$. If $v_j^n < v_{j + 1}^n$, the condition $\Delta \tau_n \leq \frac{\Delta \xi J_j^n (\phi_j^n - \check{\phi}_j^n)}{\check{\phi}_j^n (v_{j + 1}^n - v_{j}^n)}$ implies that $f_{\max} \geq 0$.
	
	{\sf Part II.} Keep $v_j^{n + 1} \geq v_{\min}$, namely, $f_{\min} \geq 0$.

	Since $k_j^n = v_j^n + p(\phi_j^n) \geq v_j^n \geq v_{\min}$, we deduce
	\begin{equation*}
	    \hat{\phi}_j^n = \left(\frac{\gamma}{v_{\rm ref}} (k_j^n - v_{\min})\right)^{1 / \gamma} \geq \left(\frac{\gamma}{v_{\rm ref}} (k_j^n - v_j^n)\right)^{1 / \gamma} = \phi_j^n. 
	\end{equation*}
	According the relation 
	\begin{equation*}
	    f_{\min} \geq 0 \Leftrightarrow \left(\frac{\gamma}{v_{\rm ref}} (k_j^n - v_{\min})\right)^{1 / \gamma} \geq \frac{J_j^n}{J_j^{n + 1}} \phi_j^n \Leftrightarrow \left( J_j^n + \frac{\Delta \tau_n}{\Delta \xi} (v_{j + 1}^n - v_j^n) \right) \hat{\phi}_j^n \geq J_j^n \phi_j^n,
	\end{equation*}
	we can obtain $f_{\min} \geq 0$ holds for any $v_j^n \leq v_{j + 1}^n$, and is true under the condition $\Delta \tau_n \leq \frac{\Delta \xi J_j^n (\hat{\phi}_j^n - \phi_j^n)}{\hat{\phi}_j^n (v_{j}^n - v_{j + 1}^n)}$ when $v_j^n > v_{j + 1}^n$. 

	Therefore, it holds that $\boldsymbol{\mathcal{U}}_j^{n + 1} \in \Omega$ under the condition $0 < \Delta \tau_n < \min \{\Delta \tau_n^{*}, \Delta \tau_n^{**} \}$, and the proof is completed.
\end{proof}

\begin{remark}
	{\rm (Sedimentation model).} Assume that $\boldsymbol{\mathcal{U}}_j^{n} \in \Omega$, then $0 < \Delta \tau_n < \min \{\Delta \tau_n^{*}, \Delta \tau_n^{**} \}$ provided that $\boldsymbol{\mathcal{U}}_j^{n + 1} \in \Omega$, where $\Delta \tau_n^{**}$ has the same expression as \eqref{09}
	with $\check{\phi}_j^n := 1 - \sqrt{v_{\max} / k_j^n}$ and $\hat{\phi}_j^n := 1 - \sqrt{v_{\min} / k_j^n}$.
\end{remark}

\begin{remark}
	{\rm (ARZ model with $\gamma = 0$).} Assume that $\boldsymbol{\mathcal{U}}_j^{n} \in \Omega$, then $0 < \Delta \tau_n < \min \{\Delta \tau_n^{*}, \Delta \tau_n^{**} \}$ provided that $\boldsymbol{\mathcal{U}}_j^{n + 1} \in \Omega$, with 
	\begin{equation*}
	    \Delta \tau_n^{**} = \left\{ 
	    \begin{aligned}
	        &\frac{\Delta \xi J_j^n (\phi_j^n - \check{\phi}_j^n)}{\check{\phi}_j^n (v_{j + 1}^n - v_{j}^n)}, \quad &&{\rm if} \; v_j^n < v_{j + 1}^n ,\\
	        &\frac{\Delta \xi J_j^n (\hat{\phi}_j^n - \phi_j^n)}{\hat{\phi}_j^n (v_{j}^n - v_{j + 1}^n)}, \quad &&{\rm if} \; v_j^n > v_{j + 1}^n,\\
	        &+ \infty \quad &&{\rm otherwise},
	    \end{aligned}
	    \right.
	\end{equation*}
	where $\check{\phi}_j^n := \exp{\frac{k_j^n - v_{\max}}{v_{\rm ref}}}$ and $\hat{\phi}_j^n := \exp{\frac{k_j^n - v_{\min}}{v_{\rm ref}}}$.
\end{remark}

\section{High-order BP scheme}\label{0sec4}

The finite difference scheme evolves the point values of the solution in a conservative form
\begin{equation}
\label{134}
	\frac{d}{dt} \boldsymbol{\mathcal{U}}_j (\tau) = - \frac{1}{\Delta \xi} \left( \widehat{\mathbf{G}}_{j + \frac{1}{2}} - \widehat{\mathbf{G}}_{j - \frac{1}{2}} \right) =: \mathcal{L} \left(\boldsymbol{\mathcal{U}}_j \right),
\end{equation}
where the numerical flux can be reconstructed from neighboring flux functions with high order accuracy using weighted essentially non-oscillatory (WENO) reconstructions \cite{shu1998essentially,jiang1996efficient}. By adaptively assigning nonlinear weights to neighboring stencils, WENO reconstruction preserves the high-order accuracy of the linear scheme in smooth regions of the solution while ensuring sharp, non-oscillatory capture of discontinuities. The ODE form \eqref{134} can be discretized in time with, for example, the classic explicit third-order total variation diminishing (TVD) Runge–Kutta (RK) method \cite{gottlieb2001strong}: 
\begin{equation}
	\label{RK3}
	\begin{aligned}
		&\boldsymbol{\mathcal{U}}_j^{n, 1} = \boldsymbol{\mathcal{U}}_j^{n} + \Delta \tau_n \mathcal{L} \left(\boldsymbol{\mathcal{U}}_j^{n} \right), \\
		&\boldsymbol{\mathcal{U}}_j^{n, 2} = \boldsymbol{\mathcal{U}}_j^{n} + \frac{1}{4} \Delta \tau_n \left[ \mathcal{L} \left(\boldsymbol{\mathcal{U}}_j^{n} \right) + \mathcal{L} \left(\boldsymbol{\mathcal{U}}_j^{n, 1} \right) \right], \\
		&\boldsymbol{\mathcal{U}}_j^{n + 1} = \boldsymbol{\mathcal{U}}_j^{n} + \frac{1}{6} \Delta \tau_n \left[ \mathcal{L} \left(\boldsymbol{\mathcal{U}}_j^{n} \right) + \mathcal{L} \left(\boldsymbol{\mathcal{U}}_j^{n, 1} \right) + 4 \mathcal{L} \left(\boldsymbol{\mathcal{U}}_j^{n, 2} \right) \right].
	\end{aligned}
\end{equation}
Given that $\lambda = \Delta \tau_n / \Delta \xi$, \eqref{RK3} can be rewritten as 
\begin{equation}
\label{RKWENO}
    \boldsymbol{\mathcal{U}}_j^{n + 1} = \boldsymbol{\mathcal{U}}_j^{n} - \lambda \left(\widehat{\mathbf{G}}_{j + \frac{1}{2}}^* - \widehat{\mathbf{G}}_{j - \frac{1}{2}}^*\right)
\end{equation}
with 
\begin{equation}
\label{hflux}
	\widehat{\mathbf{G}}_{j \pm \frac{1}{2}}^* =\frac{1}{6} \widehat{\mathbf{G}}_{j \pm \frac{1}{2}}^{n} + \frac{1}{6}\widehat{\mathbf{G}}_{j \pm \frac{1}{2}}^{n,1} + \frac{2}{3}\widehat{\mathbf{G}}_{j \pm \frac{1}{2}}^{n,2}.
\end{equation} 
In general, the numerical results of finite difference RK WENO scheme \eqref{RKWENO} may not satisfy the invariant domain $\Omega$. 
To ensure this BP property, we utlize a parametrized BP flux limiter to modify the high-order flux $\widehat{\mathbf{G}}_{j \pm \frac{1}{2}}^*$ in \eqref{hflux} towards to the first-order flux $\widehat{\mathbf{g}}_{j \pm \frac{1}{2}}^*$ in \eqref{1stscheme}, resulting in
\begin{equation}
\label{G}
    \widetilde{\mathbf{G}}_{j \pm \frac{1}{2}} := \theta_{j \pm \frac{1}{2}} \left(\widehat{\mathbf{G}}_{j \pm \frac{1}{2}}^* - \widehat{\mathbf{g}}_{j \pm \frac{1}{2}}^*\right) + \widehat{\mathbf{g}}_{j \pm \frac{1}{2}}^*, \quad \theta_{j \pm \frac{1}{2}} \in [0, 1].
\end{equation}
The original high-order flux $\widehat{\mathbf{G}}_{j \pm \frac{1}{2}}^*$ in \eqref{RKWENO} is then replaced by the modified flux $\widetilde{\mathbf{G}}_{j \pm \frac{1}{2}}$, namely
\begin{equation}
\label{fl}
    \boldsymbol{\mathcal{U}}_j^{n + 1} = \boldsymbol{\mathcal{U}}_j^{n} - \lambda \left(\widetilde{\mathbf{G}}_{j + \frac{1}{2}} - \widetilde{\mathbf{G}}_{j - \frac{1}{2}}\right)
\end{equation} above.
In the following, we will introduce the recipe of approximate invariant domain ${\Omega}_j^{n}$ for numerical solution at point $(\xi_j, \tau_n)$ in \Cref{sec:1596} and the parametrized BP limiter in \Cref{sec:1594}.

\subsection{Approximate invariant domains}\label{sec:1596}

In this section, we introduce the approximation of invariant domain $\Omega^{n}_j$ for $j = 1,\dots,N_\xi$.
In Subection~\ref{sec2}, we introduced two types of invariant domains:
\begin{enumerate}[leftmargin=8mm]
	\item[(I)] The invariant domain $\mathcal{I}^{\rm L}$, as defined in \eqref{eq:154}, where the lower and upper bounds of the Riemann invariants are determined by the minimum and maximum of the Riemann invariants within the local domain of dependence, $\mathcal{X}$, respectively.
	\item[(II)] The invariant domain $\mathcal{I}^{\rm G}$, as defined in \eqref{eq:328}, where the lower and upper bounds of the Riemann invariants are determined by the global minimum and maximum of the Riemann invariants at time $t = 0$, respectively.
\end{enumerate}
In a similar manner, the upper and lower bounds of the Riemann invariants in the definition of the invariant domain $\Omega$ (see \eqref{eq:270}) can be approximated using both local and global approaches, which are discussed in the following.

Define $E_j^n = [x_{j - 1}^n, x_{j + 1}^n]$, where $x_j^n := x_j^{n - 1} + c_j^{n - 1} \Delta \tau_{n - 1}$ represents the physical point corresponding to the computational grid point $\xi_j$ at time $\tau_n$. Let $\mathcal{C}: C(E_j^n) \mapsto \mathbb{P}^2(E_j^n)$ denote a quadratic interpolation operator over $E_j^n$, such that $[\mathcal{C}f](x) = f(x)$, $x \in \big\{x_{j-1}^n,x_{j}^n,x_{j+1}^n\big\}$.
At each time level $\tau_{n + 1}$, we estimate the local invariant domain
\begin{equation}\label{eq:1795}
	\Omega^{n + 1}_j :=
	\left\{
	\boldsymbol{\mathcal{U}} \in \mathbb{R}^3
	:
	\renewcommand\arraystretch{1.2}
	\begin{array}{r}
		J > \epsilon \quad, v \in \left[ (v_{\min})^{n + 1}_j, (v_{\max})^{n + 1}_j \right] \\
		\phi \in (0, 1),	k \in \left[ (k_{\min})^{n + 1}_j, (k_{\max})^{n + 1}_j  \right]
	\end{array}
	\right\}
\end{equation}
where
\begin{equation}
\label{eq:1745}
	\begin{aligned}
		(v_{\min})^{n + 1}_{j} &= \max\big\{\min\limits_{E_j} \mathcal{C} v\big(\boldsymbol{\mathcal{U}}_j^{n}\big) - \epsilon, 0 \big\}, \quad &&(v_{\max})^{n + 1}_{j} = \max\limits_{E_j} \mathcal{C} v\big(\boldsymbol{\mathcal{U}}_j^{n}\big) + \epsilon, \\
		(k_{\min})^{n + 1}_{j} &= \min\limits_{E_j} \mathcal{C} k\big(\boldsymbol{\mathcal{U}}_j^{n}\big) - \epsilon, \quad &&(k_{\max})^{n + 1}_{j} = \max\limits_{E_j} \mathcal{C} k\big(\boldsymbol{\mathcal{U}}_j^{n}\big)+ \epsilon. 
	\end{aligned}
\end{equation}
\begin{remark}
	The approximate local invariant domain $\Omega_j^{n+1}$ may not exactly coincide with the true global invariant domain $\Omega_j^{n+1,*}$. If the difference $\Omega_j^{n+1,*} \backslash \Omega_j^{n+1}$ is non-empty, enforcing the BP property with respect to $\Omega_j^{n+1}$ might be overly restrictive. In such cases, a reference solution with values overlapping $\Omega_j^{n+1,*} \backslash \Omega_j^{n+1,}$ could undergo unnecessary modifications by the BP limiter, potentially degrading the accuracy of the numerical results. To mitigate this issue, we introduce a small positive parameter $\epsilon$ in the formula \eqref{eq:1745} to slightly enlarge $\Omega_j^{n+1}$. In all the numerical experiments, we set $\epsilon = 10^{-12}$.
\end{remark}
When considering global invariant domains, the invariant domains are uniform across all cells. At the beginning of computation, we estimate the initial global invariant domain 
\begin{equation*}
	\Omega^{1} = \bigcup_{j = 1}^{N_\xi} \Omega^{1}_j,
\end{equation*}
with the definition \eqref{eq:1795}. 
The global invariant domain should be adjusted to accommodate boundary conditions. 
We update the approximate global invariant domain using the following formula:
\begin{equation}
\label{764}
	\Omega^{n + 1} = \Omega^{n} \cup \Omega^{n + 1}_{1} \cup \Omega^{n + 1}_{N_\xi}.
\end{equation}
As demonstrated in \eqref{764}, estimating the global invariant domain for each temporal layer (barring the initial one) requires only the additional determination of $\Omega^{n + 1}_{1}$ and $\Omega^{n + 1}_{N_\xi}$. This significantly reduces computational costs compared to estimating the local invariant domain. However, the global invariant domain estimates are overly broad, resulting in calculations that are markedly inferior to those of the local invariant domain, as illustrated in Example \ref{net1}.
\begin{remark}
	The previous BP analysis for first-order scheme \eqref{1stscheme} are based on global invariant domains. When considering local invariant domains, we should confirm the time step satisfies 
	\begin{equation*}
		0 < \Delta \tau_n < \min_{j = 1}^{N_\xi} \{\min \{\Delta \tau_{n,j}^*, \Delta \tau_{n,j}^{**}\}\},
	\end{equation*}
	where $\Delta \tau_{n,j}^*$ and $\Delta \tau_{n,j}^{**}$ are $\Delta \tau_{n}^*$ and $\Delta \tau_{n}^{**}$ at $\xi_j$, respectively. 
\end{remark}

\subsection{Parametrized BP flux limiter}\label{sec:1594}

In the following, we will demonstrate how to select $\boldsymbol{\theta} := (\theta_{j - \frac12}, \theta_{j + \frac12}) \in [0, 1]^2$ to ensure that $\boldsymbol{\mathcal{U}}_j^{n + 1} \in \Omega_j^{n + 1}$ given that $\boldsymbol{\mathcal{U}}_j^{n}$ resides within the local invariant domain $\Omega_j^{n + 1}$. 
The global invariant domain can be considered as a special case of the local invariant domain; for brevity, it is omitted here.
Define the function 
\begin{equation*}
	 h (\boldsymbol{\mathcal{U}}; s) := \left\{
	 \begin{aligned}
	 	&Jy - J\phi \left(s + p \left(J\phi / J\right) \right) \quad {\rm for} \; {\rm ARZ},\\
	 	&(Jy) p \left(J\phi / J\right) - (J\phi) s \quad {\rm for} \; {\rm Sedimentation},
 	\end{aligned}
 	\right.
\end{equation*}
where $s$ is a constant. It is important to note that $h (\boldsymbol{\mathcal{U}}; s) = 0$ is equal to $v(\boldsymbol{\mathcal{U}}) = s$. Since $\Omega_j^{n + 1}$ is convex, then the function $v(\boldsymbol{\mathcal{U}}) \equiv (v_{\min})_j^{n + 1}$ is concave when $J > \epsilon$ and $\phi > 0$. Consequently, by replacing $(v_{\min})_j^{n + 1}$ with any constant $s$, the function $h (\boldsymbol{\mathcal{U}}; s) = 0$ also exhibits concavity under the conditions $J > \epsilon$ and $\phi > 0$.
Notably, with the definition \eqref{g} and \eqref{G}, then \eqref{fl} can be rewritten as
\begin{equation}
\label{124}
	\left\{
    \begin{aligned}
        (J \phi)_j^{n + 1} &= (J \phi)_j^{n} - \lambda \left(\theta_{j + \frac12} \widehat{\mathbf{G}}_{j + \frac{1}{2}}^{*,(1)} - \theta_{j - \frac12} \widehat{\mathbf{G}}_{j - \frac{1}{2}}^{*,(1)} \right), \\
        (J y)_j^{n + 1} &= (J y)_j^{n} - \lambda \left(\theta_{j + \frac12} \widehat{\mathbf{G}}_{j + \frac{1}{2}}^{*,(2)} - \theta_{j - \frac12} \widehat{\mathbf{G}}_{j - \frac{1}{2}}^{*,(2)} \right), \\
        J_j^{n + 1} &= J_j^n - \lambda \left( \left(\theta_{j + \frac12} \left( \widehat{\mathbf{G}}_{j + \frac{1}{2}}^{*,(3)} + v_{j + 1}^n \right) -  v_{j + 1}^n \right) - \left(\theta_{j - \frac12} \left(\widehat{\mathbf{G}}_{j - \frac{1}{2}}^{*,(3)} +  v_{j}^n \right) -  v_{j}^n \right)\right).
    \end{aligned}
    \right.
\end{equation}

\begin{enumerate}[label=(\roman*),leftmargin=6mm]
	
	\item[\textbf{Step 1}:]	Enforce the constraints:
	\begin{equation}
	\label{C1}
		\left\{
		\begin{aligned}
			&J_j^{n + 1} \geq \epsilon, \quad (J\phi)_j^{n + 1} \in (0, J_j^{n + 1}), \\
			&(Jy)_j^{n + 1} \in [(k_{\min})_j^{n + 1}(J\phi)_j^{n + 1}, (k_{\max})_j^{n + 1}(J\phi)_j^{n + 1}],
		\end{aligned}
		\right.
	\end{equation}
	which are equivalent to $J_j^{n + 1} \geq \epsilon$, $\phi_j^{n + 1} \in (0, 1)$ , and $k_j^{n + 1} \in [(k_{\min})_j^{n + 1}, (k_{\max})_j^{n + 1}]$.
	
	Given that 
	\begin{equation*}
	    \begin{aligned}
	    \Gamma_j^{[1]} &:= J_j^n + \lambda ( v_{j + 1}^n - v_j^n ) - \epsilon, \quad \Gamma_j^{[2]} :=  J_j^n + \lambda ( v_{j + 1}^n - v_j^n ) - (J\phi)_j^n, \\
	    \Gamma_j^{[4]} &:= (J\phi)_j^n, \quad \Gamma_j^{[4]} := (Jy)_j^n - (k_{\min})_j^{n + 1} (J\phi)_j^n, \quad \Gamma_j^{[5]} := (k_{\max})_j^{n + 1} (J\phi)_j^n - (Jy)_j^n,
	    \end{aligned}
	\end{equation*}
	since the first-order scheme \eqref{1stscheme} satisfies the constraints in \eqref{C1}, we will have $\Gamma_j^{[i]} \geq 0$, for $i = 1,2,\cdots, 5.$
	For the high-order scheme \eqref{124}, the constraints \eqref{C1} imply that suitable values of $\Lambda_{\pm \frac{1}{2}, I_j}^1$ should be found such that for all $0 \leq \theta_{j \pm \frac12} \leq \Lambda_{\pm \frac{1}{2}, I_j}^1$, the following equations hold:

	\begin{equation}\label{step1}
	    \begin{aligned}
	    	\mathcal{T}_j^{[i]} := \Gamma_j^{[i]} - \lambda \left( \theta_{j + \frac{1}{2}} \widehat{\mathbf{G}}_{j + \frac{1}{2}}^{\dag,[i]} - \theta_{j - \frac{1}{2}} \widehat{\mathbf{G}}_{j - \frac{1}{2}}^{\dag,[i]} \right) \geq 0, \quad {\rm for} \; i = 1, 2, \cdots, 5,
	    \end{aligned}
	\end{equation}
	where 
	\begin{equation*}
		\begin{aligned}
			&\widehat{\mathbf{G}}_{j \pm \frac{1}{2}}^{\dag,[1]} = \widehat{\mathbf{G}}_{j \pm \frac{1}{2}}^{*,(3)} + v_{j \pm \frac12 + \frac12}^n, \quad 
			\widehat{\mathbf{G}}_{j \pm \frac{1}{2}}^{\dag,[2]} = \widehat{\mathbf{G}}_{j \pm \frac{1}{2}}^{*,(3)} + v_{j \pm \frac12 + \frac12}^n - \widehat{\mathbf{G}}_{j \pm \frac{1}{2}}^{*,(1)}, \\
			&\widehat{\mathbf{G}}_{j \pm \frac{1}{2}}^{\dag,[3]} = \widehat{\mathbf{G}}_{j \pm \frac{1}{2}}^{*,(1)}, \quad
			\widehat{\mathbf{G}}_{j \pm \frac{1}{2}}^{\dag,[4]} = \widehat{\mathbf{G}}_{j \pm \frac{1}{2}}^{*,(2)} - (k_{\min})_j^{n + 1}\widehat{\mathbf{G}}_{j \pm \frac{1}{2}}^{*,(1)}, \quad
			\widehat{\mathbf{G}}_{j \pm \frac{1}{2}}^{\dag,[5]} = (k_{\max})_j^{n + 1}\widehat{\mathbf{G}}_{j \pm \frac{1}{2}}^{*,(1)} - \widehat{\mathbf{G}}_{j \pm \frac{1}{2}}^{*,(2)}.
		\end{aligned}
	\end{equation*}
	The decoupling of each $\mathcal{T}_j^{[i]}$ on cell $I_j$ gives:
	\begin{enumerate}
		\item[(a)] If $\widehat{\mathbf{G}}_{j - \frac{1}{2}}^{\dag,[i]} \geq 0$ and $\widehat{\mathbf{G}}_{j + \frac{1}{2}}^{\dag,[i]} \leq 0$,
		let $(\Lambda_{- \frac{1}{2}, I_j}^{[i]}, \Lambda_{+ \frac{1}{2}, I_j}^{[i]}) = (1, 1);$
		\item[(b)] If $\widehat{\mathbf{G}}_{j - \frac{1}{2}}^{\dag,[i]} \geq 0$ and $\widehat{\mathbf{G}}_{j + \frac{1}{2}}^{\dag,[i]} > 0$,
		let $(\Lambda_{- \frac{1}{2}, I_j}^{[i]}, \Lambda_{+ \frac{1}{2}, I_j}^{[i]}) = (1, \min (1, \frac{-\Gamma_j^{[i]}}{-\lambda \widehat{\mathbf{G}}_{j + \frac{1}{2}}^{\dag,[i]} - \epsilon_1}));$
		\item[(c)] If $\widehat{\mathbf{G}}_{j - \frac{1}{2}}^{\dag,[i]} < 0$ and $\widehat{\mathbf{G}}_{j + \frac{1}{2}}^{\dag,[i]} \leq 0$,
		let $(\Lambda_{- \frac{1}{2}, I_j}^{[i]}, \Lambda_{+ \frac{1}{2}, I_j}^{[i]}) = (\min (1, \frac{-\Gamma_j^{[i]}}{\lambda \widehat{\mathbf{G}}_{j - \frac{1}{2}}^{\dag,[i]} - \epsilon_1}), 1);$
		\item[(d)] If $\widehat{\mathbf{G}}_{j - \frac{1}{2}}^{\dag,[i]} < 0$ and $\widehat{\mathbf{G}}_{j + \frac{1}{2}}^{\dag,[i]} > 0$, \\
		\begin{itemize}
			\item[--] \vspace{-1em} if $(\theta_{j - \frac12}, \theta_{j + \frac12}) = (1, 1)$ satisfies $\mathcal{T}_j^{[i]} \geq 0$, let $(\Lambda_{- \frac{1}{2}, I_j}^{[i]}, \Lambda_{+ \frac{1}{2}, I_j}^{[i]}) = (1, 1)$;
			\item[--] otherwise, let $(\Lambda_{- \frac{1}{2}, I_j}^{[i]}, \Lambda_{+ \frac{1}{2}, I_j}^{[i]}) = (\frac{-\Gamma_j^{[i]}}{\lambda \widehat{\mathbf{G}}_{j - \frac{1}{2}}^{\dag,[i]} - \lambda \widehat{\mathbf{G}}_{j + \frac{1}{2}}^{\dag,[i]} - \epsilon_1}, \frac{-\Gamma_j^{[i]}}{\lambda \widehat{\mathbf{G}}_{j - \frac{1}{2}}^{\dag,[i]} - \lambda \widehat{\mathbf{G}}_{j + \frac{1}{2}}^{\dag,[i]} - \epsilon_1})$.
		\end{itemize}
	\end{enumerate}

	Here, $\epsilon_1 = 10^{-13}$ is a small positive number to avoid the denominator to be zero. Setting $\Lambda_{\pm \frac{1}{2}, I_j}^{1} = \min_{i = 1}^{5} \Lambda_{\pm \frac{1}{2}, I_j}^{[i]}$, we can define a set for the \eqref{step1}
	\begin{equation*}
	    S_1 = \left\{(\boldsymbol{\theta}: 0 \leq \theta_{j - \frac{1}{2}} \leq \Lambda_{- \frac{1}{2}, I_j}^1, 0 \leq \theta_{j + \frac{1}{2}} \leq \Lambda_{+ \frac{1}{2}, I_j}^1  \right\},
	\end{equation*}
	which is plotted as the rectangle bounded by the dash line in Figure \ref{fig:sub1}.
	
	\item[\textbf{Step 2}:]Enforce the constraint $h_{j}^{n + 1} (\boldsymbol{\theta}; (v_{\min})_j^{n + 1}) \geq 0$, namely, $v_j^{n + 1} \geq (v_{\min})_j^{n + 1}$ where
	\begin{equation*}
		 h_{ j}^{n + 1} (\boldsymbol{\theta}; s) := h (\boldsymbol{\mathcal{U}}_j^{n + 1}; s).
	\end{equation*}
	Due to the linear dependence of $\boldsymbol{\mathcal{U}}_j^{n + 1}$ on $\boldsymbol{\theta}$ and the concavity of $h (\boldsymbol{\mathcal{U}}_j^{n + 1}; s)$, we observe that $h_j^{n + 1}(\boldsymbol{\theta}; (v_{\min})_j^{n + 1})$ is a concave function over the set $S_1$. Furthermore, if $h_{j}^{n + 1}(\boldsymbol{\theta}^l; (v_{\min})_j^{n + 1}) \geq 0$ for $l = 1, 2$, with $\boldsymbol{\theta}^l = (\theta^l_{j - \frac{1}{2}}, \theta^l_{j + \frac{1}{2}})$, then $h_{j}^{n + 1}(\alpha \boldsymbol{\theta}^1 + (1 - \alpha) \boldsymbol{\theta}^2; (v_{\min})_j^{n + 1}) \geq 0$ for $\alpha \in [0, 1]$. The vertices of rectangle $S_1$, excluding $(0, 0)$, are defined as:
	\begin{equation*}
	    a_1 = (\Lambda_{- \frac{1}{2}, I_j}^1, 0), \quad a_2 = (\Lambda_{- \frac{1}{2}, I_j}^1, \Lambda_{+ \frac{1}{2}, I_j}^1), \quad a_3 = (0, \Lambda_{+ \frac{1}{2}, I_j}^1).
	\end{equation*}
	\begin{enumerate}
		\item[(a)] Let $r_i = 1$, if $h_{j}^{n + 1}(a_i; (v_{\min})_j^{n + 1}) \geq 0$ for $i = 1, 2, 3$; otherwise find $r_i \in [0, 1]$, such that $h_{j}^{n + 1}(r_i a_i; (v_{\min})_j^{n + 1}) \geq 0$ and set $b_i = r_i a_i$. The convex polygonal region $Ob_1b_2b_3$ (denoted as $P$) must be inside $S_1$.
		\item[(b)] Then, we define 
		\begin{equation*}
		    S_2 = \left\{\boldsymbol{\theta}: 0 \leq \theta_{j - \frac{1}{2}} \leq \Lambda_{- \frac{1}{2}, I_j}^2, 0 \leq \theta_{j + \frac{1}{2}} \leq \Lambda_{+ \frac{1}{2}, I_j}^2  \right\},
		\end{equation*}
		with
		\begin{equation*}
		    \Lambda_{- \frac{1}{2}, I_j}^2 = \min \{ r_1, r_2 \}\Lambda_{- \frac{1}{2}, I_j}^1, \quad \Lambda_{+ \frac{1}{2}, I_j}^2 = \min \{ r_2, r_3 \}\Lambda_{+ \frac{1}{2}, I_j}^1.
		\end{equation*}
		As shown in Figure \ref{fig:sub1}, $S_2$ is the largest rectangle included in $P$ which satisfies $v_j^{n + 1} \geq (v_{\min})_j^{n + 1}$.
		
	\end{enumerate}

	\item[\textbf{Step 3}:]Enforce the constraint $h_{j}^{n + 1} (\boldsymbol{\theta}; (v_{\max})_j^{n + 1}) \leq 0$, which is equal to $\Omega_{{\max};j}^{n+1} := \{\boldsymbol{\theta}: v_j^{n + 1} \leq (v_{\max})_j^{n + 1}\}$. \\
	We note that $h_{j}^{n + 1}(\boldsymbol{\theta}^l; (v_{\max})_j^{n + 1}) \leq 0$ cannot obtain $h_{j}^{n + 1}(\alpha \boldsymbol{\theta}^1 + (1 - \alpha) \boldsymbol{\theta}^2; (v_{\max})_j^{n + 1}) \leq 0$ anymore. Let $m(l)$ be the slope of a straight line $l$, and $C$ as the curve $h_{j}^{n + 1}(\boldsymbol{\theta}; (v_{\max})_j^{n + 1}) = 0$. Define the index set $\mathbb{I}$ as consisting of indices $j$ for which $h_{j}^{n + 1} (\Lambda_{- \frac{1}{2}, I_j}^2, \Lambda_{+ \frac{1}{2}, I_j}^2; (v_{\max})_j^{n + 1}) > 0$, and set 
	\begin{equation*}
		\mathbb{K}_j = \{j - 1, j + 1\} \cap \{ 1, 2, \cdots, N_\xi \}. 
	\end{equation*}
	For each $j \in \mathbb{I}$, do the following loop until $\mathbb{I} = \emptyset$.
	\begin{enumerate}
		\item[(a)] Let $a = (\Lambda_{- \frac{1}{2}, I_j}^2, \Lambda_{+ \frac{1}{2}, I_j}^2)$, the straight line $Oa$ intersects the curve $C$ , which is convex, at only one point $b_1$. Make the tangent $l_1$ to the curve $C$ at point $b_1$.

		\item[(b)] If $m(l_1) \in (-\inf, 0)$. Let $(\Lambda_{- \frac{1}{2}, I_j}^3, \Lambda_{+ \frac{1}{2}, I_j}^3) = \boldsymbol{\theta}(b_1)$, as shown in Figure \ref{fig:sub2}.

		\item[(c)] If $m(l_1) \in (0, m(Oa))$. Define the point $b_2 := (\theta_{j - \frac12}(b_1), 0)$, and determine whether $b_2$ is in the set $\Omega_{{\max};j}^{n+1}$.
		\begin{enumerate}
			\item[(i)] If $b_2 \notin \Omega_{{\max};j}^{n+1}$. The curve $C$ will intersect the line $Ob_2$ at point $b_3$. Then the tangent $l_2$ to $C$ at point $b_3$ will intersect the line $l_1$ at point $b_4$, as shown in Figure \ref{fig:sub3}.
			\item[(ii)] If $b_2 \in \Omega_{{\max};j}^{n+1}$. The curve $C$ will intersect the line $b_1b_2$ at point $b_3$. 
			If the tangent $l_2$ to $C$ at point $b_3$ can intersect the line $l_1$ at a point $b_* \in \Omega_{{\max};j}^{n+1}$ with property ${\theta}_{j - \frac12}(b_*) \leq {\theta}_{j - \frac12}(b_1)$. As shown in Figure \ref{fig:sub4}, we set $b_4 = b_*$. Otherwise, let $b_4$ be the point as the intersection of lines $Ob_2$ and $l_1$, as shown in Figure \ref{fig:sub5}. 
		\end{enumerate}
		Then, we can set $(\Lambda_{- \frac{1}{2}, I_j}^3, \Lambda_{+ \frac{1}{2}, I_j}^3) = ({\theta}_{j - \frac12}(b_4), \Lambda_{+ \frac{1}{2}, I_j}^2)$. 

		\item[(d)] If $m(l_1) \in (m(Oa), + \inf)$. This case is a mirror image of case (c), here we omit the exact process, and set $(\Lambda_{- \frac{1}{2}, I_j}^3, \Lambda_{+ \frac{1}{2}, I_j}^3) = (\Lambda_{- \frac{1}{2}, I_j}^2, {\theta}_{j + \frac12}(b_4))$. 

		\item[(e)] If $m(l_1) = + \inf$. Let $(\Lambda_{- \frac{1}{2}, I_j}^3, \Lambda_{+ \frac{1}{2}, I_j}^3) = (({\theta}_{j - \frac12}(b_1), \Lambda_{+ \frac{1}{2}, I_j}^2)$.

		\item[(f)] If $m(l_1) = 0$. Let $(\Lambda_{- \frac{1}{2}, I_j}^3, \Lambda_{+ \frac{1}{2}, I_j}^3) = (\Lambda_{- \frac{1}{2}, I_j}^2, {\theta}_{j + \frac12}(b_1))$.

		\item[(g)] Let $\mathbb{K}_j^*$ be the subset of indices $k$ from $\mathbb{K}_j$ that do not satisfy $v_k^{n + 1} \leq (v_{\max})_k^{n + 1}$. Upadte the index set $\mathbb{I} = \mathbb{I} / \{j\} \cup \mathbb{K}_j^*$.
	\end{enumerate}
	Finally, we define 
	\begin{equation*}
	    S_3 = \left\{\boldsymbol{\theta}: 0 \leq \theta_{j - \frac{1}{2}} \leq \Lambda_{- \frac{1}{2}, I_j}^3, 0 \leq \theta_{j + \frac{1}{2}} \leq \Lambda_{+ \frac{1}{2}, I_j}^3  \right\}.
	\end{equation*}
\end{enumerate}
\begin{figure}[hbtp]
\centering

\begin{subfigure}[b]{0.49\textwidth}
    \centering
    \includegraphics[width = 0.653\textwidth,trim=70 0 95 30,clip]{./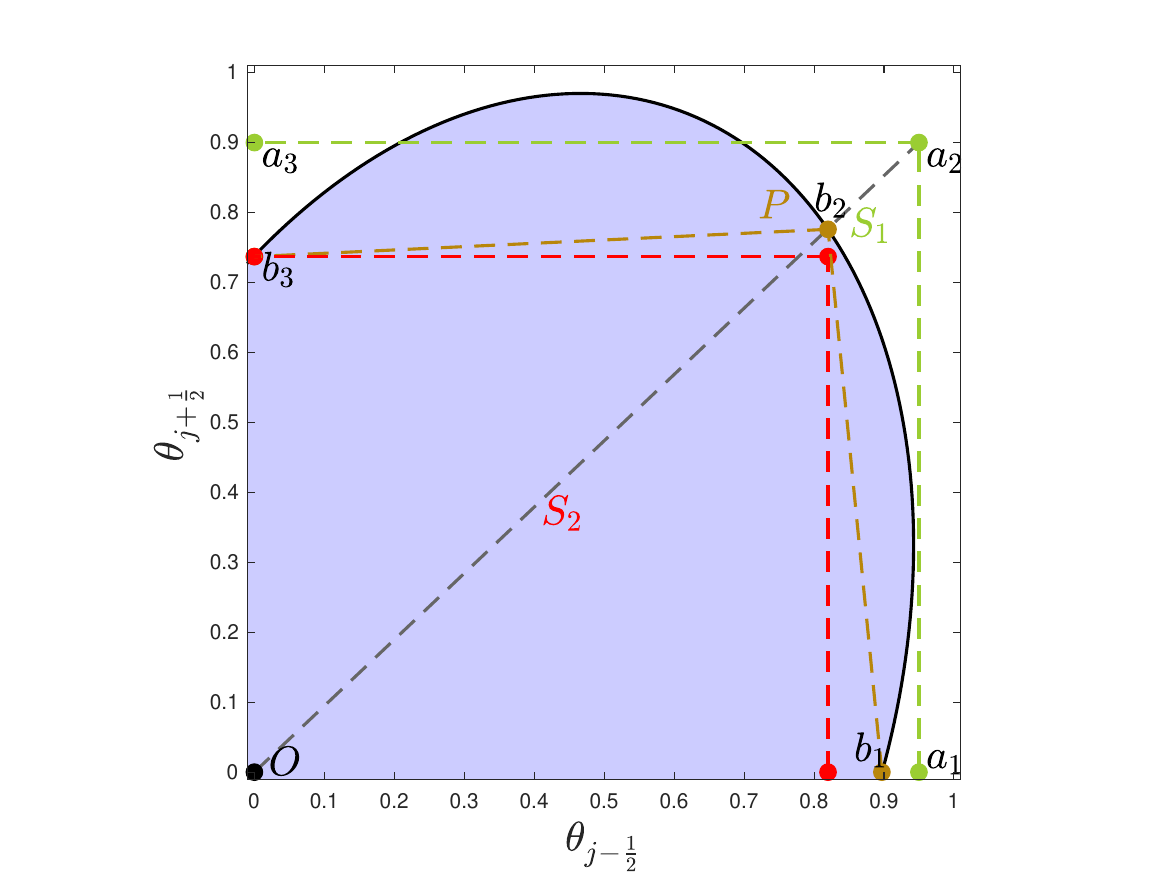}
    \caption{Limit $v_j^{n+1} \geq (v_{\min})_j^{n+1}$.}
    \label{fig:sub1}
\end{subfigure}
\hfill
\begin{subfigure}[b]{0.49\textwidth}
    \centering
    \includegraphics[width = 0.653\textwidth,trim=70 0 95 30,clip]{./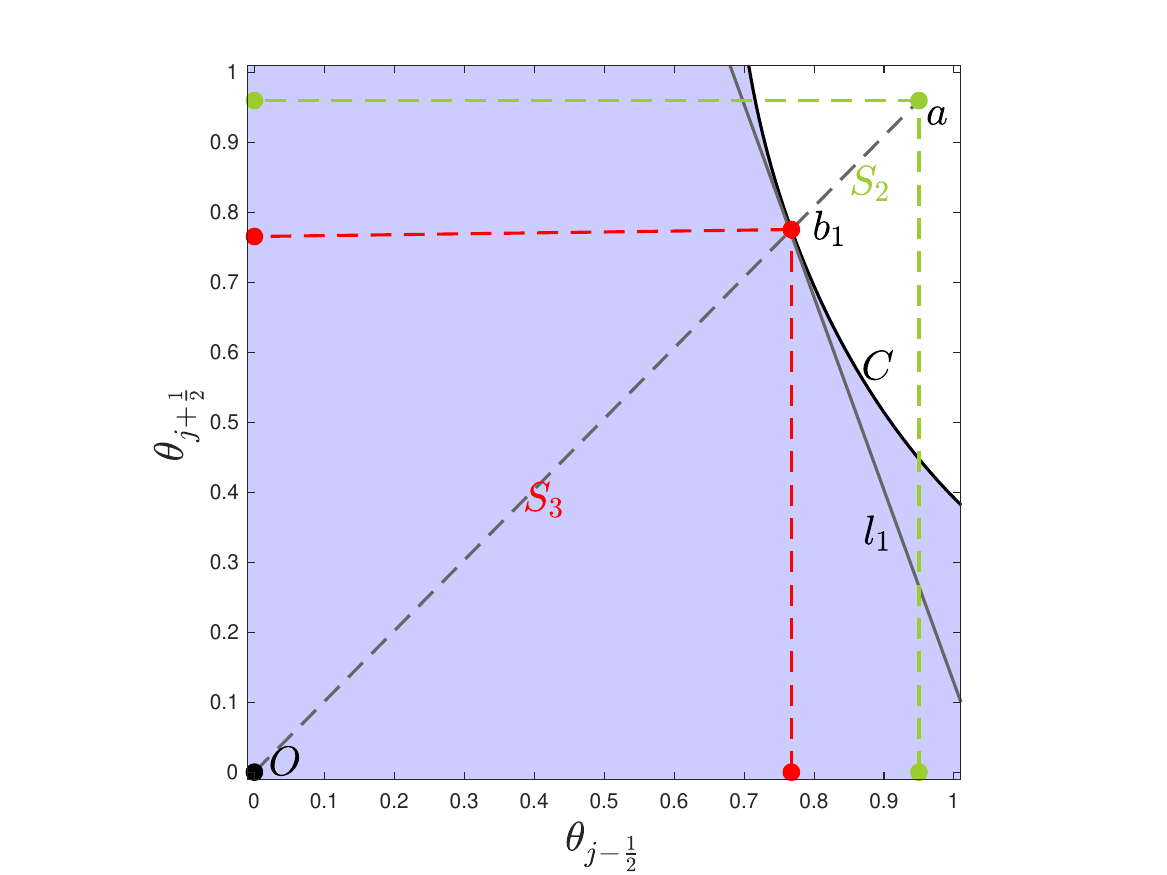}
    \caption{Limit $v_j^{n+1} \leq (v_{\max})_j^{n+1}$, case(b).}
    \label{fig:sub2}
\end{subfigure}

\vspace{1em}

\begin{subfigure}[b]{0.32\textwidth}
    \centering
    \includegraphics[width = \textwidth,trim=70 0 95 30,clip]{./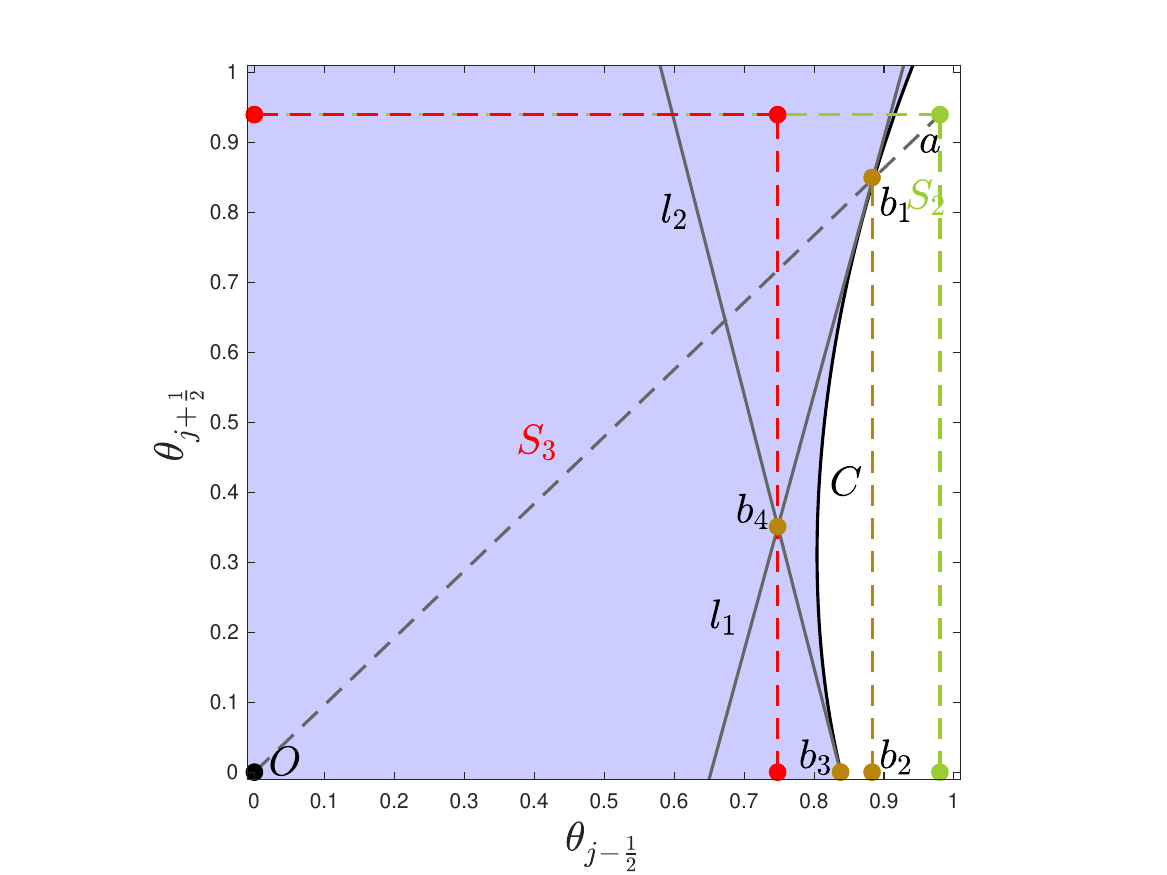}
    \caption{Limit $v_j^{n+1} \leq (v_{\max})_j^{n+1}$, case(c)(i).}
    \label{fig:sub3}
\end{subfigure}
\hfill
\begin{subfigure}[b]{0.32\textwidth}
    \centering
    \includegraphics[width = \textwidth,trim=70 0 95 30,clip]{./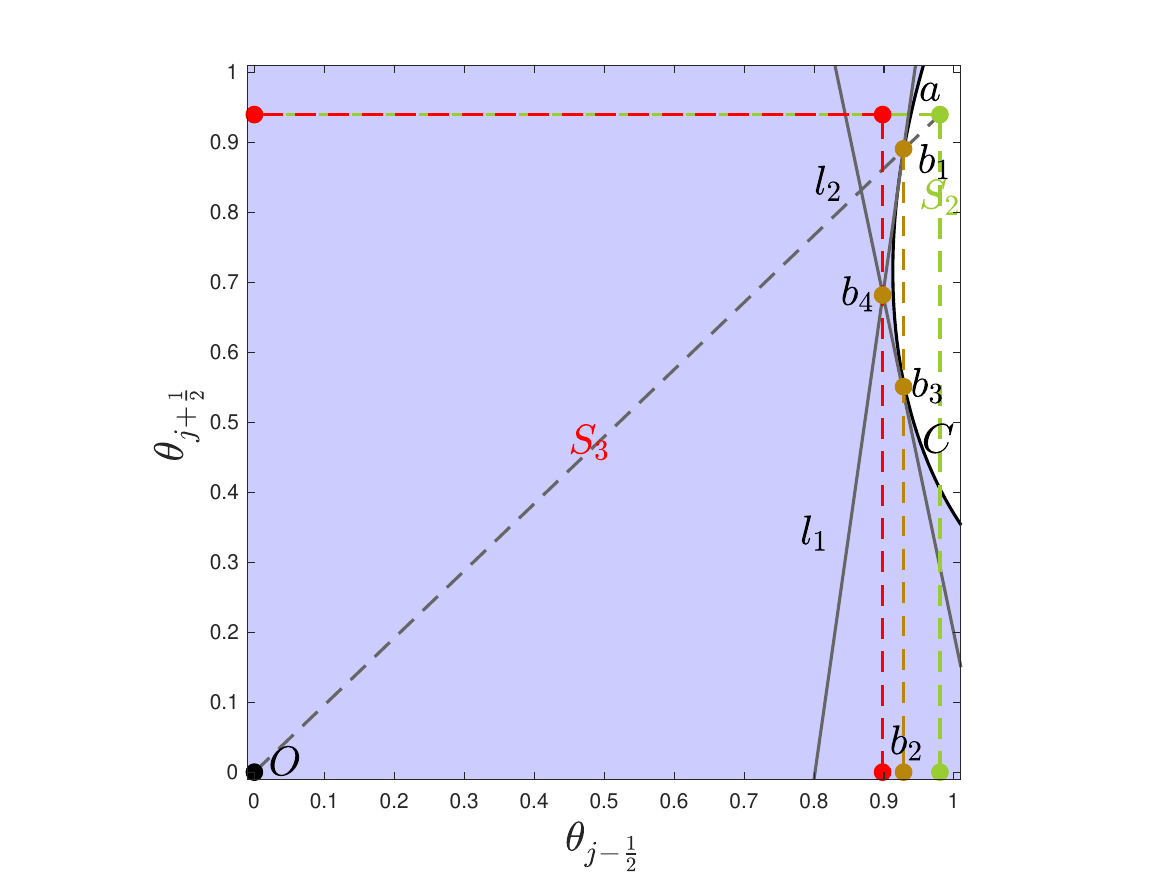}
    \caption{Limit $v_j^{n+1} \leq (v_{\max})_j^{n+1}$, case(c)(ii).}
    \label{fig:sub4}
\end{subfigure}
\hfill
\begin{subfigure}[b]{0.32\textwidth}
    \centering
    \includegraphics[width =  \textwidth,trim=70 0 95 30,clip]{./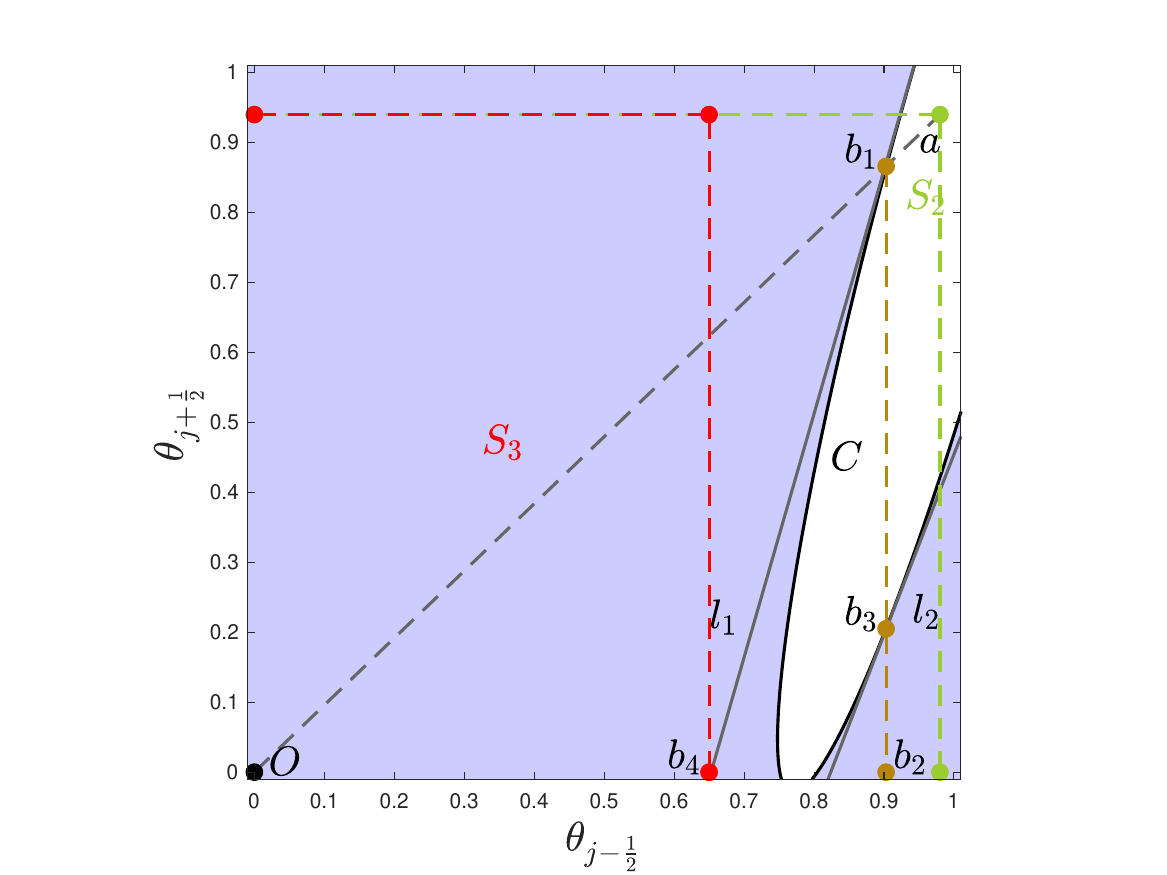}
    \caption{Limit $v_j^{n+1} \leq (v_{\max})_j^{n+1}$, case(c)(ii).}
    \label{fig:sub5}
\end{subfigure}

\caption{Limit $(v_{\min})_j^{n+1} \leq v_j^{n+1} \leq (v_{\max})_j^{n+1}$.}
\label{fig:total}
\end{figure}

\begin{remark}
	The case $m(l_1) = m(Oa)$ implies that the point $a$ belongs to $\Omega_{{\max};j}^{n + 1}$.
\end{remark}

\section{Numerical Examples}\label{0sec5}

To demonstrate the high-order accuracy, bound-preserving property, and overall effectiveness of the proposed schemes, we conduct several rigorous numerical tests on Temple-class systems and ARZ traffic models on networks. We employ a fifth-order finite difference WENO scheme for spatial discretization \cite{shu1998essentially,jiang1996efficient} and a third-order Runge-Kutta scheme for time integration \cite{gottlieb2001strong}, combined with the proposed BP flux limiters. Unless stated otherwise, the Courant-Friedrichs-Lewy (CFL) number is set to $0.6$, and the time step should also satisfy the CFL condition:
\begin{equation*}
	\Delta \tau_n \leq \frac{{\rm CFL} \Delta \xi}{\max_{j = 1}^{N_\xi} \left\{ \vert \lambda_{j, *}^n \vert + c_j^n\right\}},
\end{equation*}
where $\vert \lambda_{j, *}^n \vert$ denotes the maximum eigenvalue at the time level $\tau_n$ for the cell $I_j$ in the Temple-class system.
For all test, we set $J(\xi, 0) \equiv 1$, which means that the physical domain $\mathcal{D}_p$ is equal to the computational domain $\mathcal{D}_c$ at time level $\tau_0$. All numerical results for Temple-class system are presented based on the physical location at the final time.

\subsection{Numerical experiments for Temple-class systems}\label{sec:single}

\begin{exa} \label{exam1}
	Consider the following smooth initial condition for ARZ model:
	\begin{equation*}
		\phi(\xi, 0) \equiv 0.5, \quad v(\xi, 0) = 0.1 + 0.4 \cos{2 \pi \xi},
	\end{equation*}
	with parameters $v_{\rm ref} = 0.4$ and $\gamma = 0$. The computational domain is $\xi \in [0, 1]$ with periodic boundary conditions. We compute the numerical solution up to a final time of $t = 0.1$ using our BP scheme. Since obtaining the exact solution for this initial condition is challenging, we use the numerical solution computed by the BP scheme on a fine mesh ($N_\xi = 2560$) as a reference solution. Our scheme employs a moving mesh, which may result in slight variations in the physical locations for different mesh sizes. In this example, Lagrange interpolation is used to compute the numerical errors for different $N_\xi$: 
	\begin{equation*}
		L_1 \; {\rm error} = \sum_{i = 1}^{N_\xi} \left\vert \mathcal{P} \left( \left(x_{i;j}^{\rm Ref}, k_{i;j}^{\rm Ref}\right)_{j = 1}^6, x_i\right) - k_i \right\vert,
	\end{equation*}
	where $k_i$ represents the numerical result of $k$ at the final time physical location $x_i$, and $k_{i;j}^{\rm Ref}, j = 1, \cdots, 6$ are the six values closest to $x_i$ on the reference solution, corresponding to the physical locations $x_{i;j}^{\rm Ref}$. The function 
	\begin{equation*}
		\mathcal{P} \left( \left(x_{i;j}, k_{i;j}\right)_{j = 1}^s, x_i\right)
	\end{equation*}
	 is the Lagrange interpolation polynomial used to estimate the value at $x_i$ based on the $s$ given data $(x_{i;j}, k_{i;j})$. The results, which demonstrate clear convergence orders, are presented in Table \ref{ErrorTable}. Without the BP limiter, the numerical solution exceeds $v_{\max}$ at approximately $t \approx 0.050893$ for $N_\xi = 320$ and $t \approx 0.0020089$ for $N_\xi = 640$.

	\begin{table}[th!]
		\renewcommand\arraystretch{1.2}
		\caption{\sf Example \ref{exam1}, the errors and orders of $k$.}
		\begin{center}
			\begin{tabular}{clclclc}
				
				\toprule[1.5pt]
				
				$N_\xi$ &$L_1$ error & order & $L_2$ error & order & $L_\infty$ error & order \\ 
				
				\midrule[1.5pt]
				
				10  & 8.76e-04 & --   & 1.21e-03 & --   & 1.77e-03 & --  \\ 
				20  & 9.73e-05 & 3.17 & 1.12e-04 & 3.44 & 1.88e-04 & 3.24\\ 
				40  & 7.03e-06 & 3.79 & 9.73e-06 & 3.52 & 1.86e-05 & 3.34\\ 
				80  & 3.62e-07 & 4.28 & 5.95e-07 & 4.03 & 1.90e-06 & 3.29\\ 
				160 & 8.22e-09 & 5.46 & 1.60e-08 & 5.22 & 5.87e-08 & 5.02\\ 
				320 & 1.29e-10 & 5.99 & 2.11e-10 & 6.24 & 7.62e-10 & 6.27\\ 
				640 & 2.08e-12 & 5.96 & 2.81e-12 & 6.23 & 2.05e-11 & 5.21\\ 
				
				\bottomrule[1.5pt]
				
			\end{tabular}
		\end{center}
		\label{ErrorTable}
	\end{table}
	
\end{exa}

\begin{exa} \label{exam2}
	In this example, we investigate the conservativity of our schemes by considering the following Riemann problem based on the ARZ model:
	\begin{equation}
	\label{RiemannConservativity}
	    (\phi, v)(\xi, 0) = \left\{
	    \begin{aligned}
	        &(0.6, 0.6) \quad {\rm if} \; \xi \in [-0.2, 0.2], \\
	        &(0.5, 0.6) \quad {\rm otherwise}, 
	    \end{aligned}
	    \right.
	\end{equation}
	with $v_{\rm ref} = 1$, $\gamma = 2$, and $T = 1$. The computational domain is $\xi \in [-2, 2]$ with periodic boundary conditions. We define the error in conservativity as follows:
	\begin{equation*}
	\label{consereq}
	    {\rm err}_{J\phi} = \left\vert \sum_{i = 1}^{N_\xi} (J\phi)_i^{N_T} - \sum_{i = 1}^{N_\xi} (J\phi)_i^{0} \right\vert.
	\end{equation*}
	Table \ref{TRC} presents the values of ${\rm err}_{J\phi}$ for Example \ref{exam1} and the aforementioned Riemann problem, showing that our schemes approach conservative as $\Delta t \rightarrow 0$. Figure \ref{FRC} illustrates the results of \eqref{RiemannConservativity} with $N_\xi = 500$, demonstrating that our BP schemes outperform the NonBP scheme by preventing non-physical velocity overshoot.

	\begin{table}[h!]
		\renewcommand\arraystretch{1.2}
		\centering
		\caption{\sf The conservativity of $J\phi$.}
		\begin{tabular}{c cccc cc}
			
			\toprule[1.5pt]
			
			\multirow{2}{*}{$N_\xi$} & \multicolumn{4}{c}{Example \ref{exam1}} & \multicolumn{2}{c}{Example \ref{exam2}} \\
			
			\cmidrule(lr){2-5} \cmidrule(lr){6-7}
			
			& 20 & 80 & 320 & 1280 & 500 & 1000 \\
			
			\midrule[1.5pt]
			
			${\rm err}_{J\phi}$ & 2.22e-16 & 3.89e-16 & 1.11e-15 & 3.82e-14 & 1.85e-14 & 5.78e-14 \\ 
			
			\bottomrule[1.5pt]
			
		\end{tabular}
		\label{TRC}
	\end{table}
	 
	\begin{figure}[hbtp]
	    \begin{center}
	        \mbox{
	        {\includegraphics[width = 0.33\textwidth,trim=25 15 35 10,clip]{./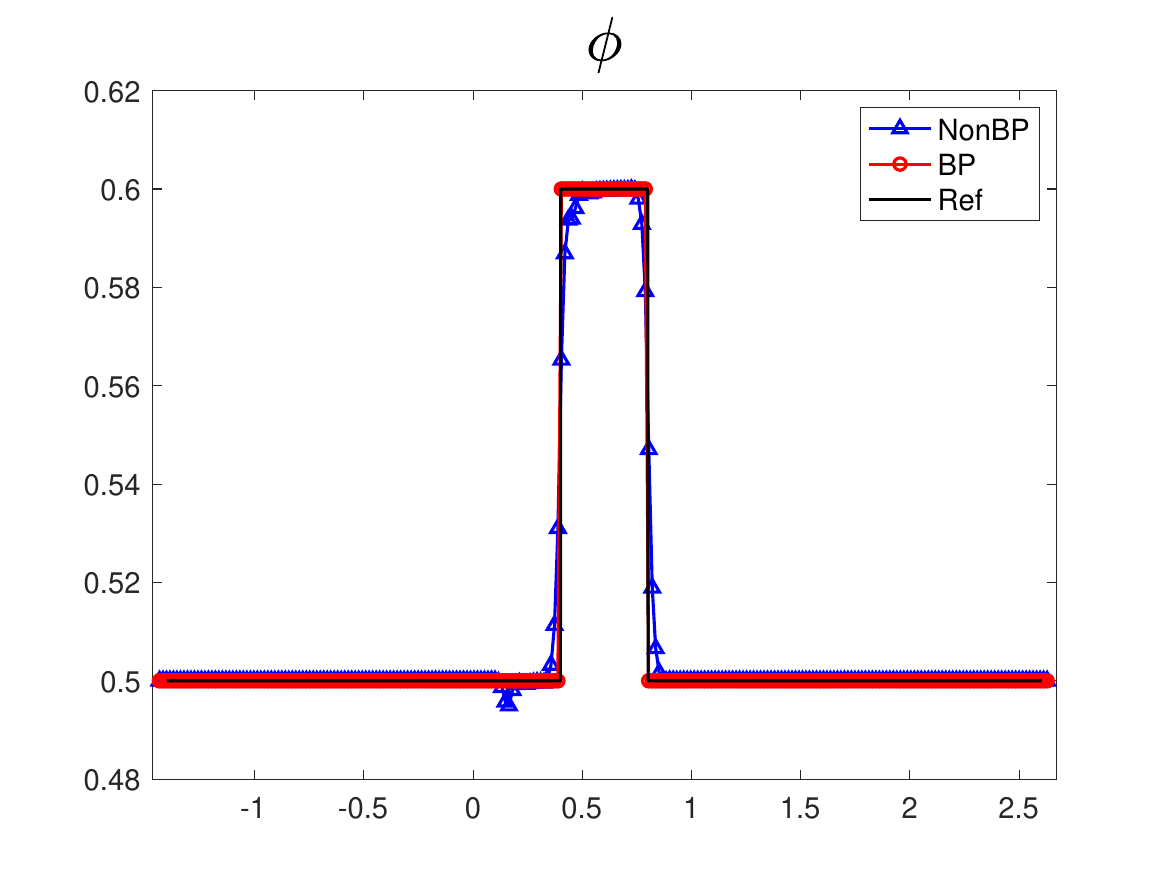}}
	        {\includegraphics[width = 0.33\textwidth,trim=25 15 35 10,clip]{./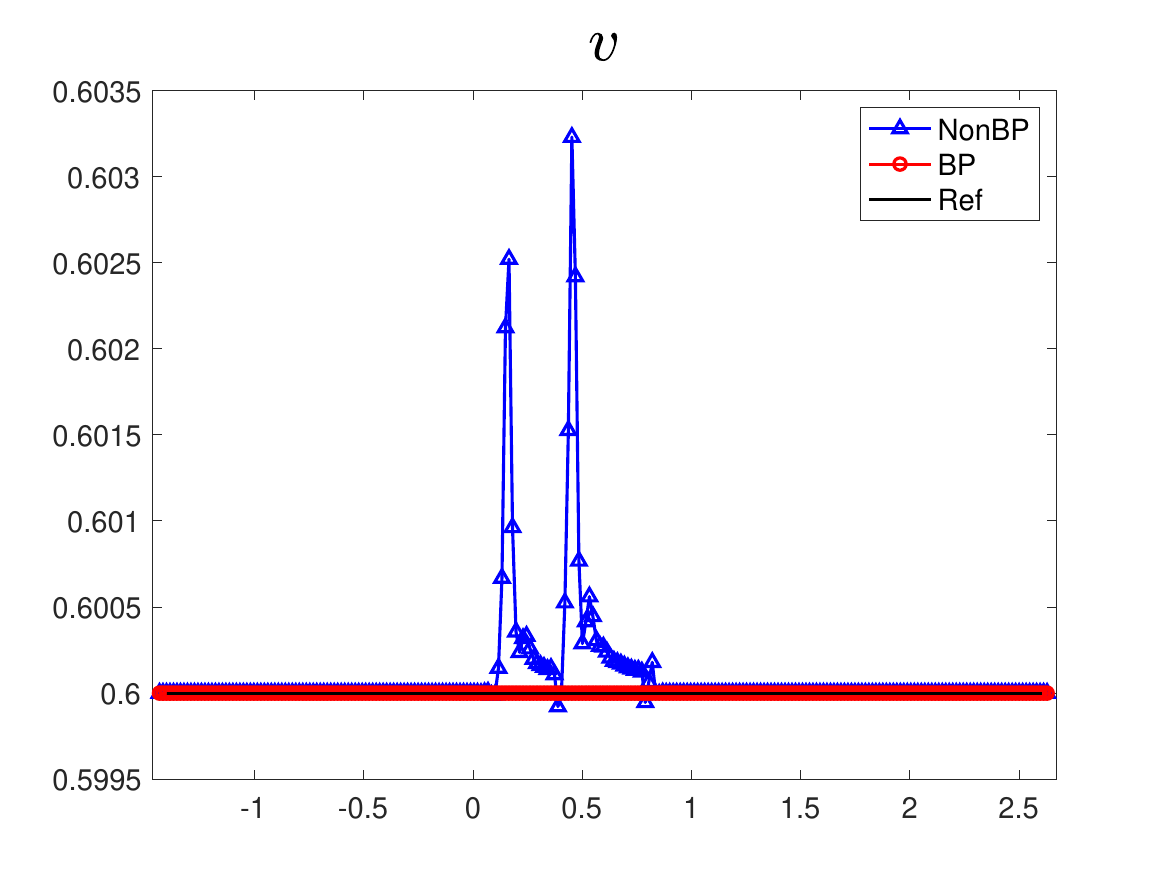}}
	        {\includegraphics[width = 0.33\textwidth,trim=25 15 35 10,clip]{./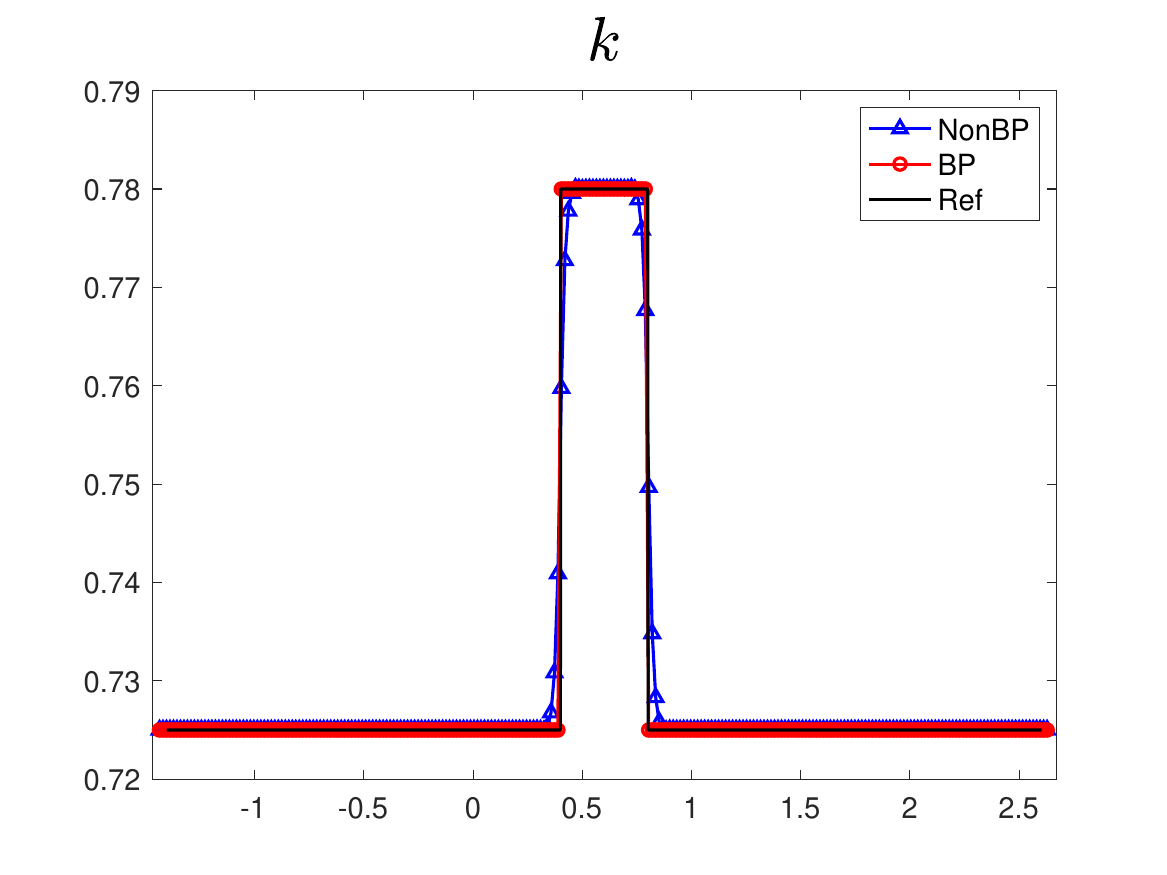}}}\\
	        \caption{\sf Example \ref{exam2}, the numerical resluts obtained by using the BP scheme and NonBP scheme, $t = 1$, and $\Delta \xi = 0.008$.}
	        \label{FRC}
	    \end{center}
	\end{figure}
	
\end{exa}

\begin{exa} \label{exam3}
	To demonstrate the high resolution of our BP scheme, we design the following initial values for the sedimentation model:
	\begin{equation*}
	\begin{aligned}
	    & \phi (\xi, 0) = 0.4, \\
	    & v (\xi, 0) = \left\{
	    \begin{aligned}
	        & 0.1, \quad {\rm if} \; \xi \in [0, 0.5] \cup [3.5, 4], \\
	        & 0.1 + 0.01 * (3.5 - \xi) (\xi - 0.5) \sin{(10 \pi (\xi - 0.5)(3.5 - \xi)}), \quad {\rm otherwise}.   
	    \end{aligned}
	    \right. 
	\end{aligned}
	\end{equation*}
	The computational domain is $\xi \in [0, 4]$ with $\gamma = 2$. Figure \ref{T8} presents the results of the BP scheme ($N_\xi = 500$) alongside the first-order BP scheme results for both $N_\xi = 500$ and $N_\xi = 40000$, with the latter serving as a reference solution.  It is evident that the low-order BP method significantly dampens the magnitude of the transmitted wave, while the high-order BP method resolves the waves with excellent accuracy.

	\begin{figure}[hbtp]
	    \begin{center}
	        \mbox{
	        {\includegraphics[width = 0.33\textwidth,trim=25 15 35 10,clip]{./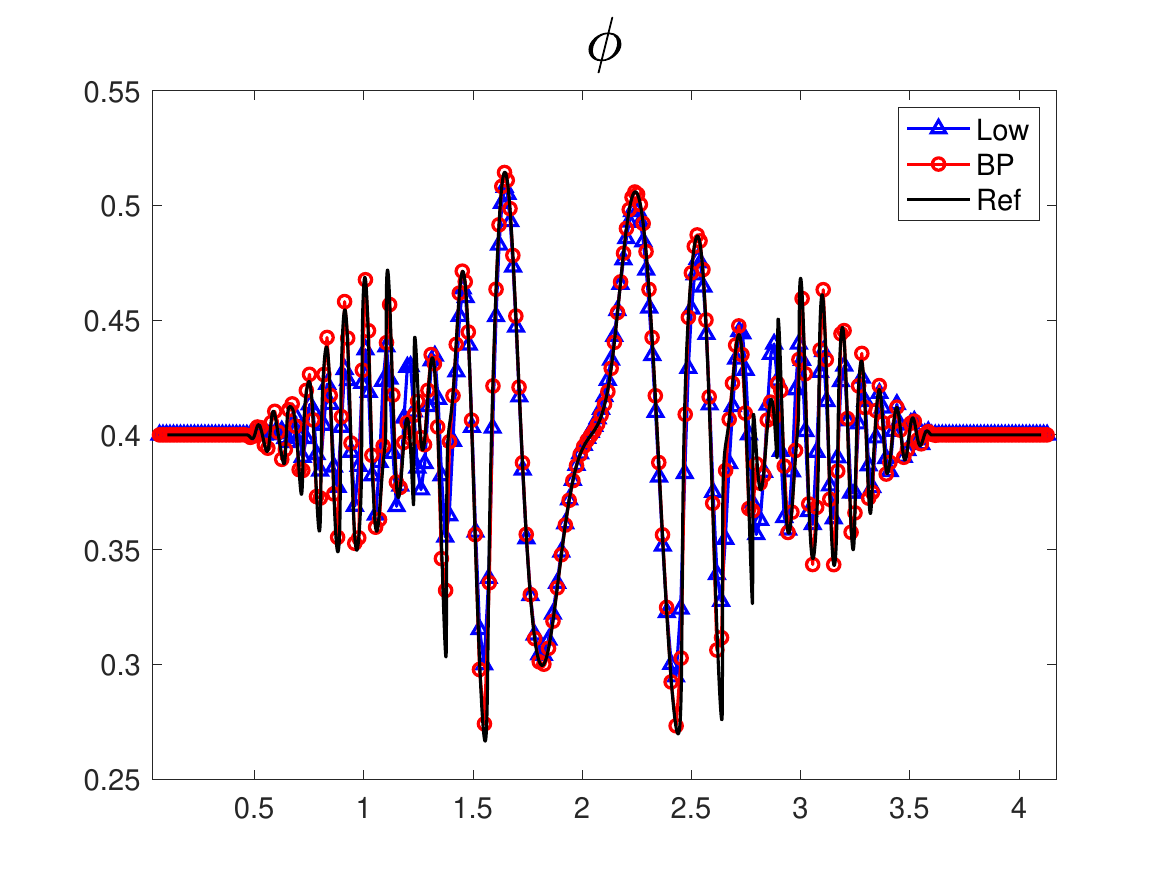}}
	        {\includegraphics[width = 0.33\textwidth,trim=25 15 35 10,clip]{./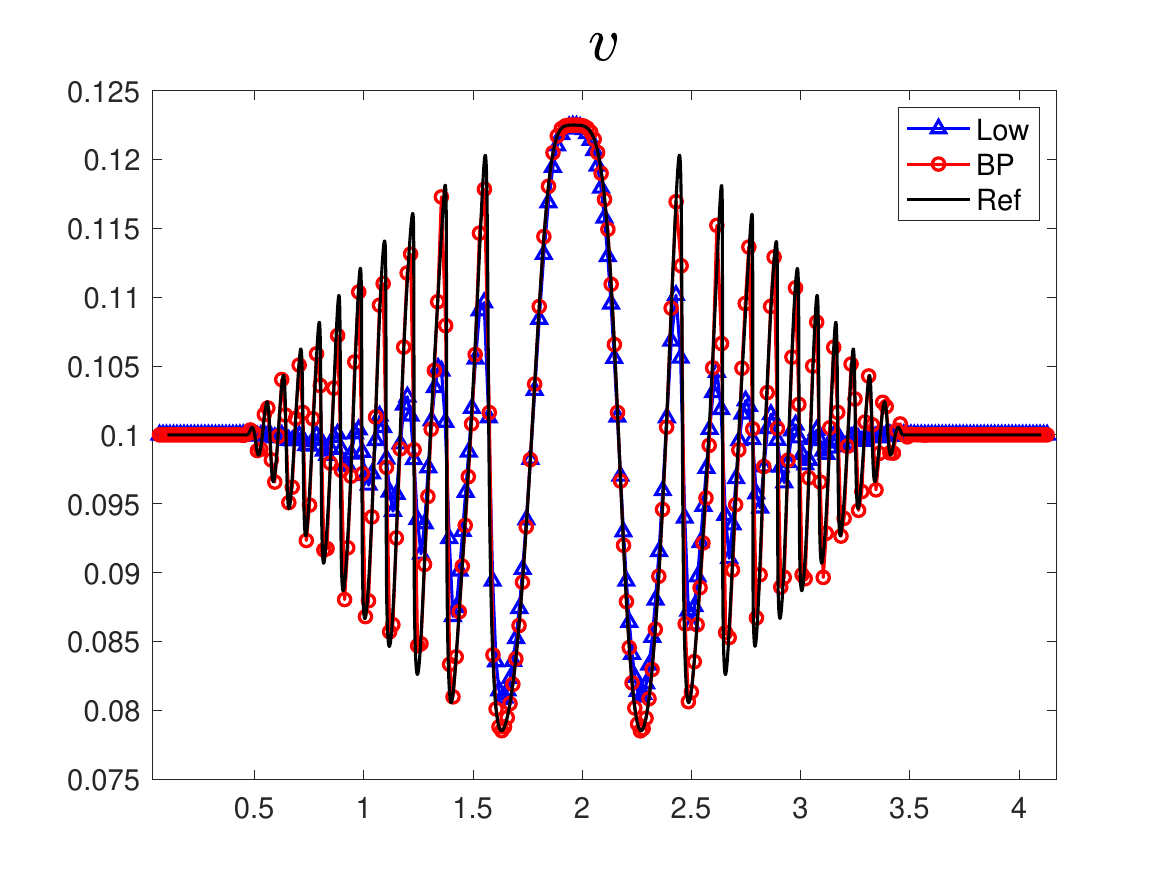}}
	        {\includegraphics[width = 0.33\textwidth,trim=25 15 35 10,clip]{./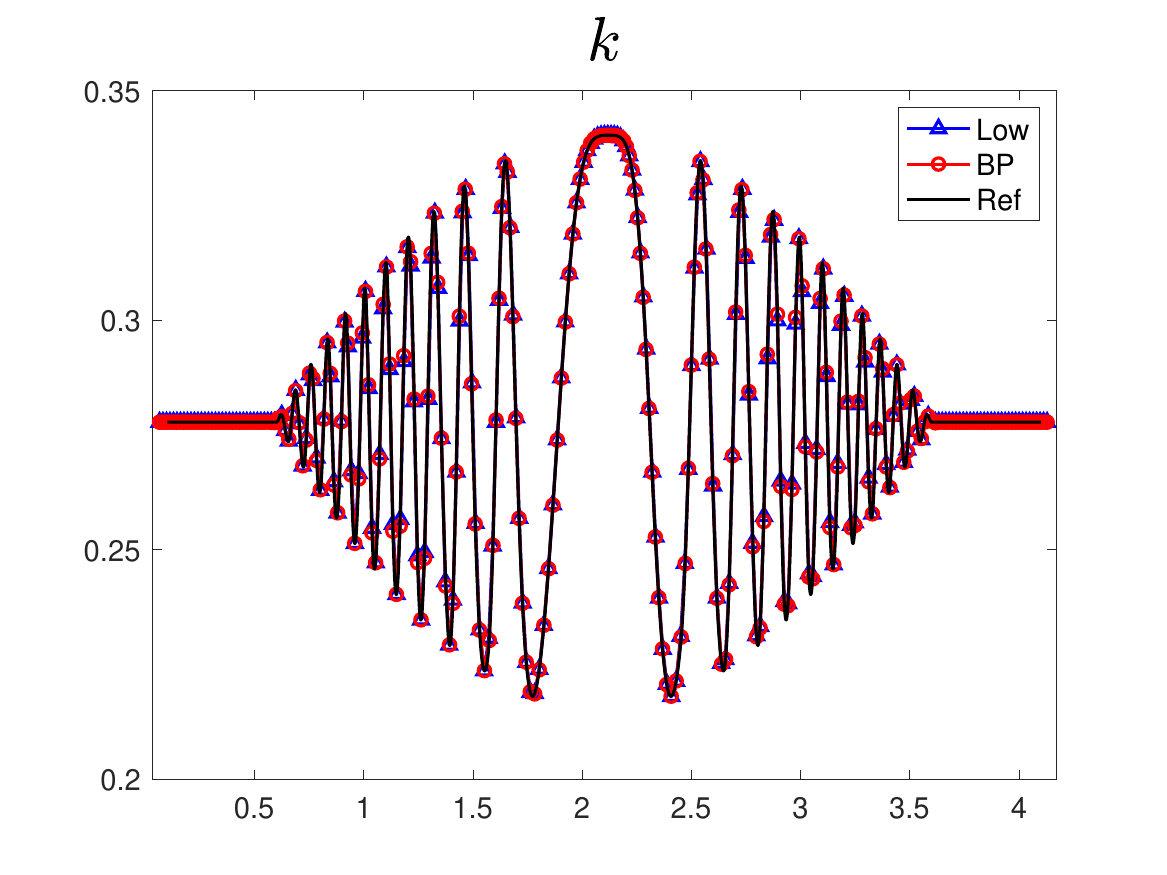}}}\\
	        \caption{\sf Example \ref{exam3}, Low: low-order BP scheme with $N_\xi = 500$; BP: high-order BP scheme with $N_\xi = 500$; Ref: low-order BP scheme with $N_\xi = 40000$.}
	        \label{T8}
	    \end{center}
	\end{figure}

\end{exa}

\begin{exa} \label{exam4}
	\begin{table}[h!]
		\renewcommand\arraystretch{1.2}
		\centering
		\caption{\sf Initial conditions of Riemann problem tests.}
		\begin{tabular}{c cc cc c}
			
			\toprule[1.5pt]
			
			\multirow{2}{*}{Test} & \multicolumn{2}{c}{Riemann problem left state} & \multicolumn{2}{c}{Riemann problem right state} &  \multirow{2}{*}{$\gamma$ values}\\
			
			\cmidrule(lr){2-3} \cmidrule(lr){4-5}
			
			& $\phi_L$ & $v_L$ & $\phi_R$ & $v_R$ & \\
			
			\midrule[1.5pt]
			
			T1 & 0.8  & 0.4    & 0.1  & 0.4    & 2 \\
			T2 & 0.5  & 0.1    & 1e-8 & 0.4    & 1 \\
			T3 & 0.8  & 0.4    & 1e-10 & 0.4    & 0 \\
			T4 & 0.55 & 0.0405 & 0.1  & 0.0405 & 2 \\ 
			T5 & 0.8  & 0.024  & 0.1  & 0.243  & 2 \\ 
			
			\bottomrule[1.5pt]
			
		\end{tabular}
		\label{Riemann}
	\end{table}
	Firstly, we consider Riemann problems for the ARZ model with the following initial conditions:

	\begin{equation*}
	    (\phi, v) (\xi, 0) \equiv \left\{
	    \begin{aligned}
	        &(\phi_L, v_L) \quad &&{\rm if} \; \xi < 0,\\
	        &(\phi_R, v_R) \quad &&{\rm otherwise}.
	    \end{aligned}
	    \right.
	\end{equation*}
	Two different initial conditions (Test T1 and T2) for the Riemann problem are listed in Table \ref{Riemann}, and the final solutions are plotted in Figure \ref{T4}. We observe that the BP scheme avoids velocity overshoot and preserves the wave structure more effectively than the NonBP scheme. Notably, when $\phi$ is near vacuum (Test T2), the NonBP scheme produces a negative $\phi$ and breaks down after a few time steps.
	\begin{figure}[hbtp]
	    \begin{center}
	        \mbox{
	        {\includegraphics[width = 0.33\textwidth,trim=25 15 35 10,clip]{./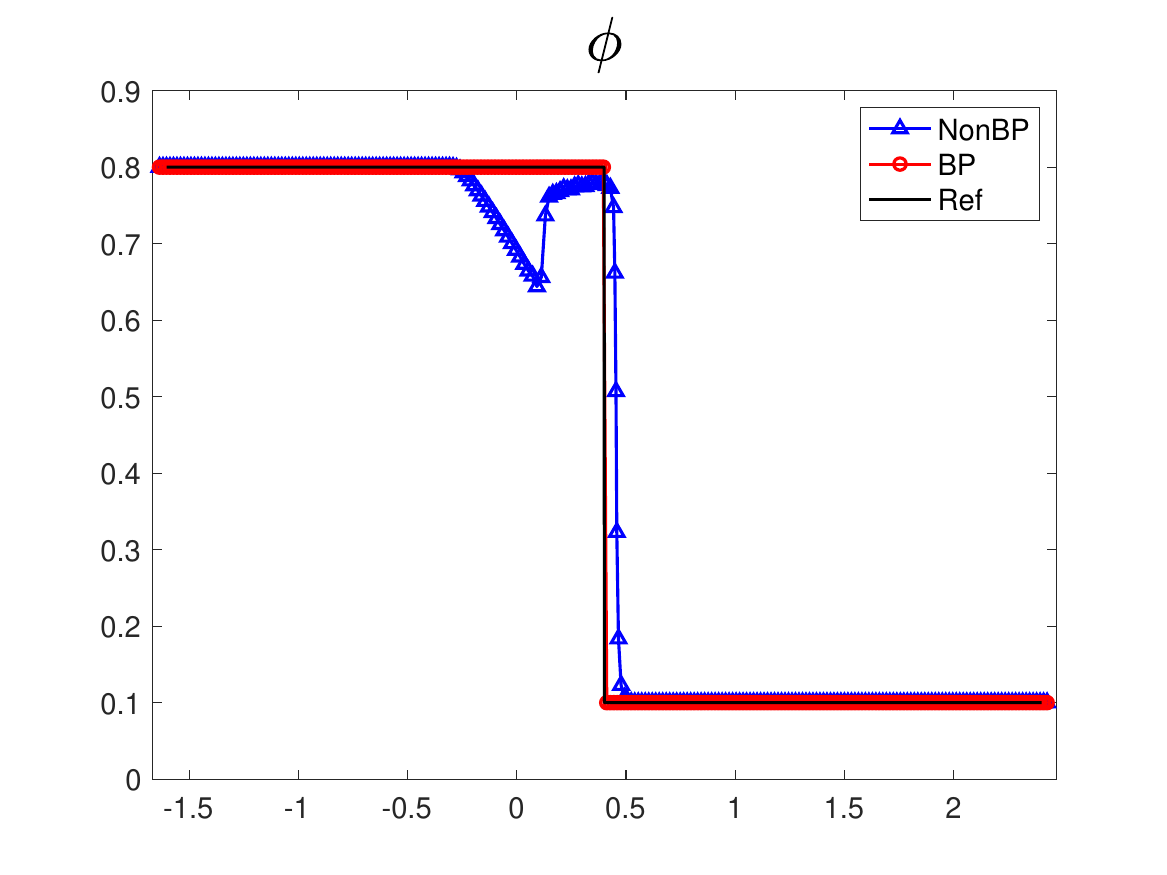}}
	        {\includegraphics[width = 0.33\textwidth,trim=25 15 35 10,clip]{./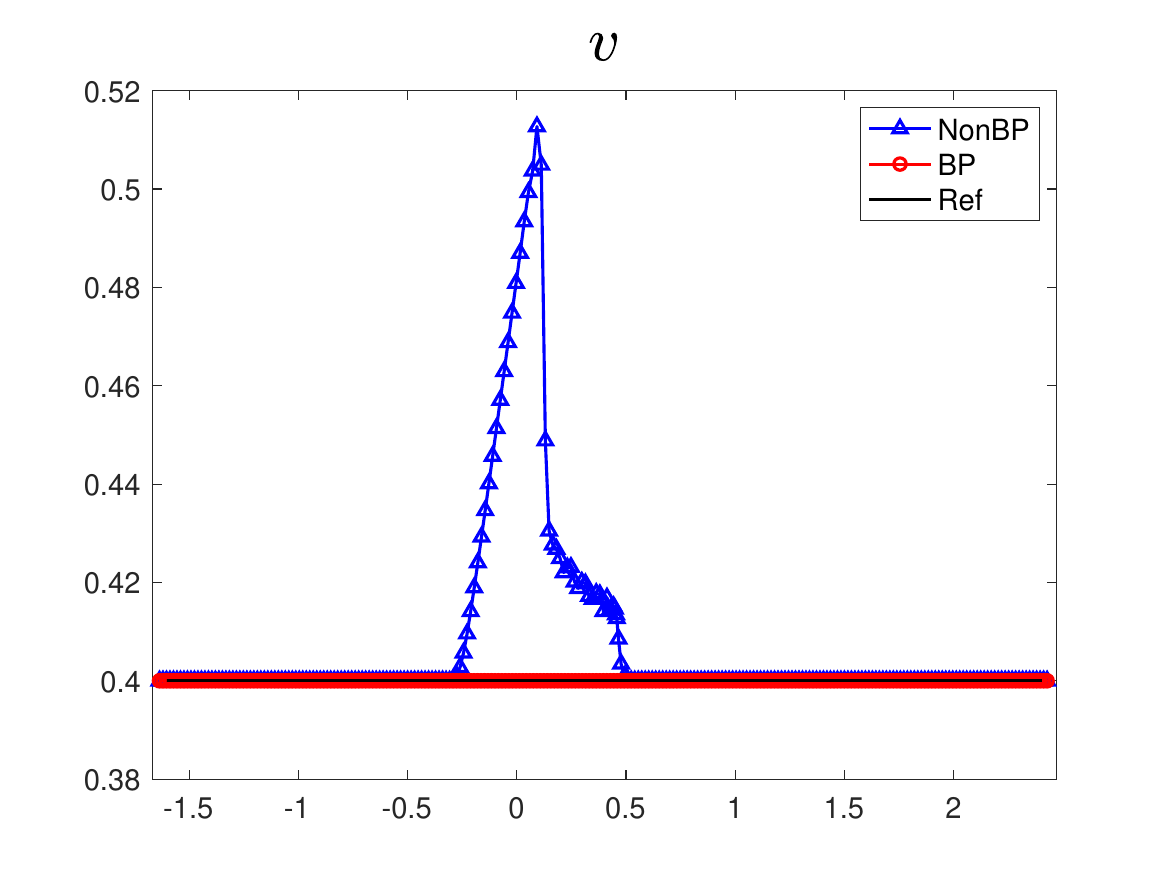}}
	        {\includegraphics[width = 0.33\textwidth,trim=25 15 35 10,clip]{./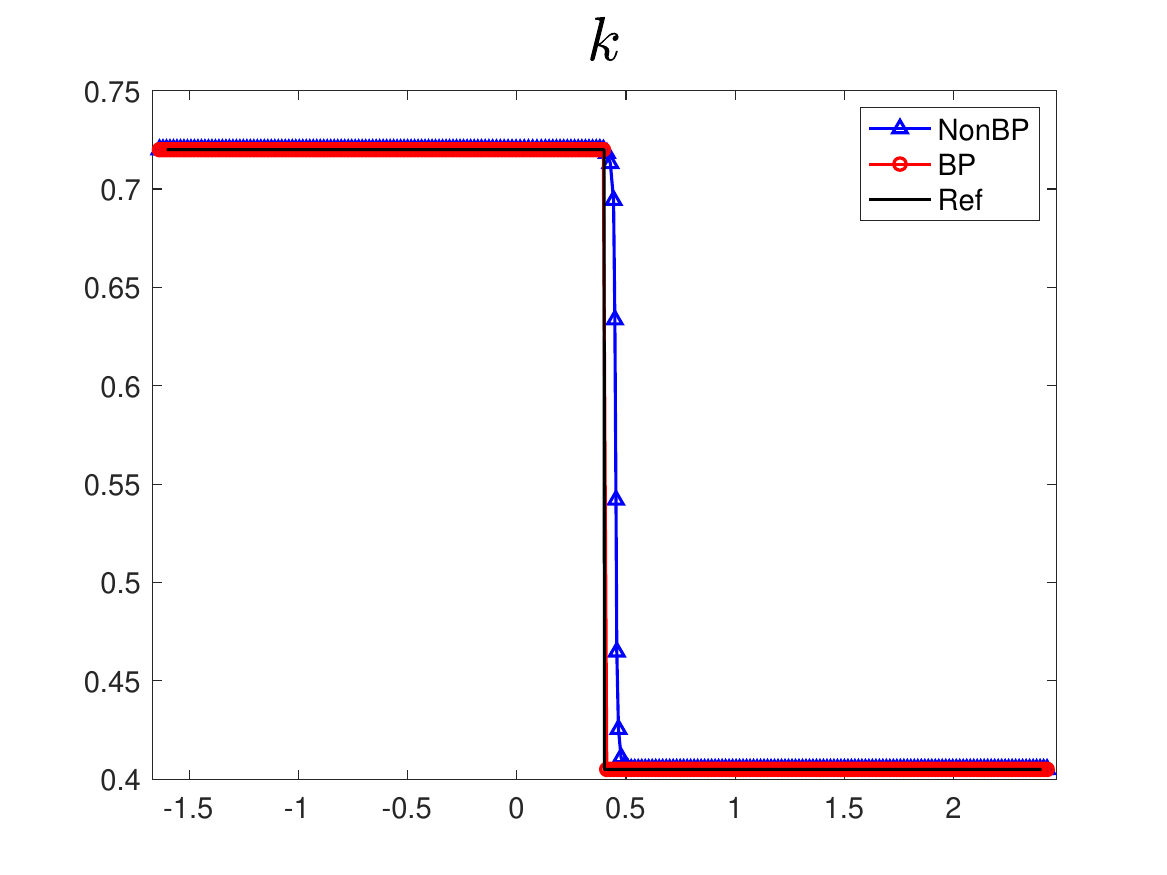}}}\\
	        \mbox{
	        {\includegraphics[width = 0.33\textwidth,trim=25 15 35 10,clip]{./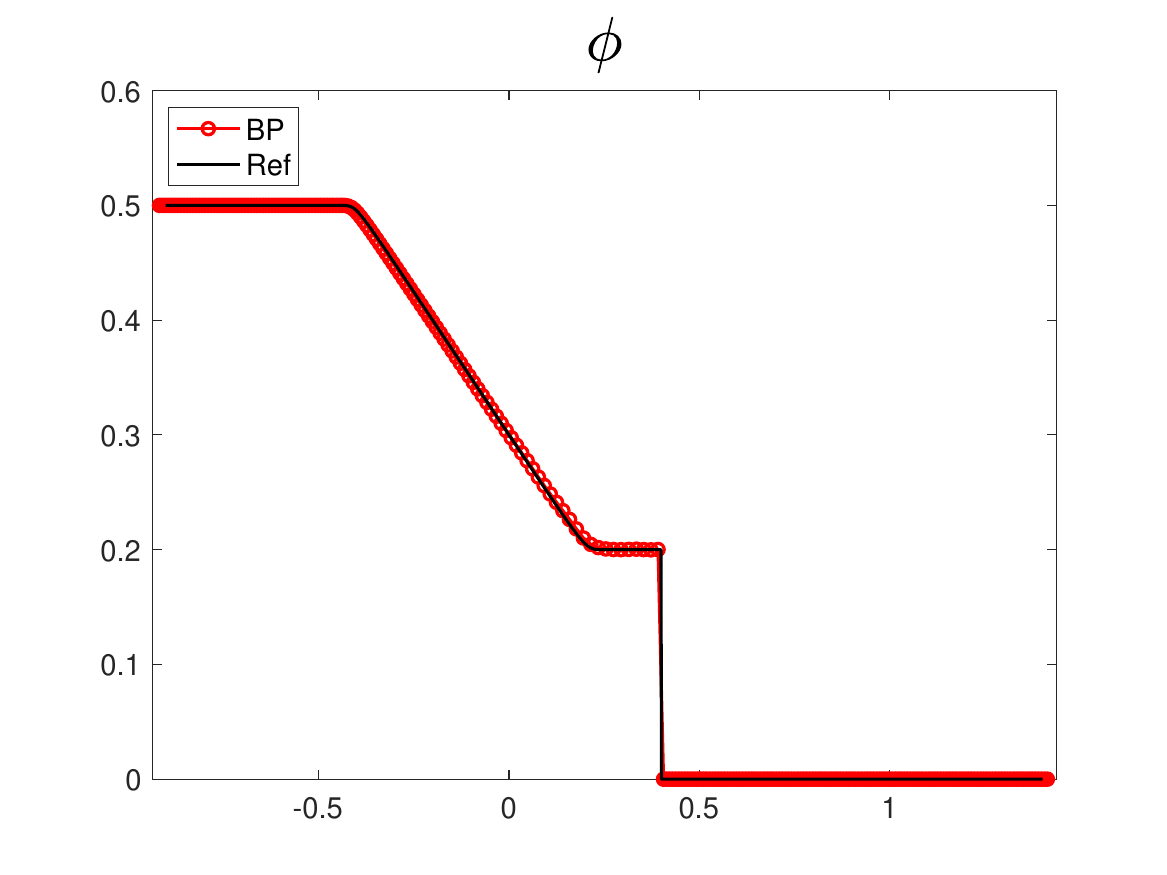}}
	        {\includegraphics[width = 0.33\textwidth,trim=25 15 35 10,clip]{./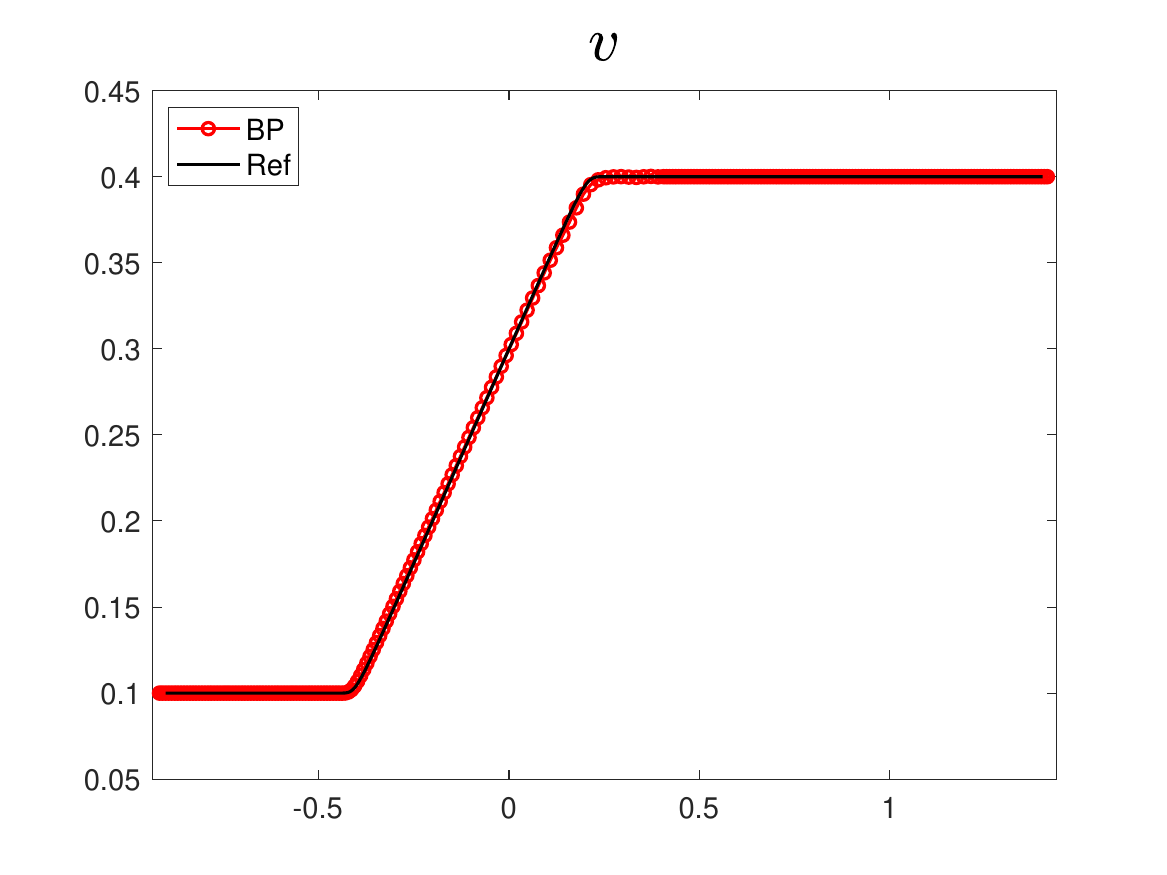}}
	        {\includegraphics[width = 0.33\textwidth,trim=25 15 35 10,clip]{./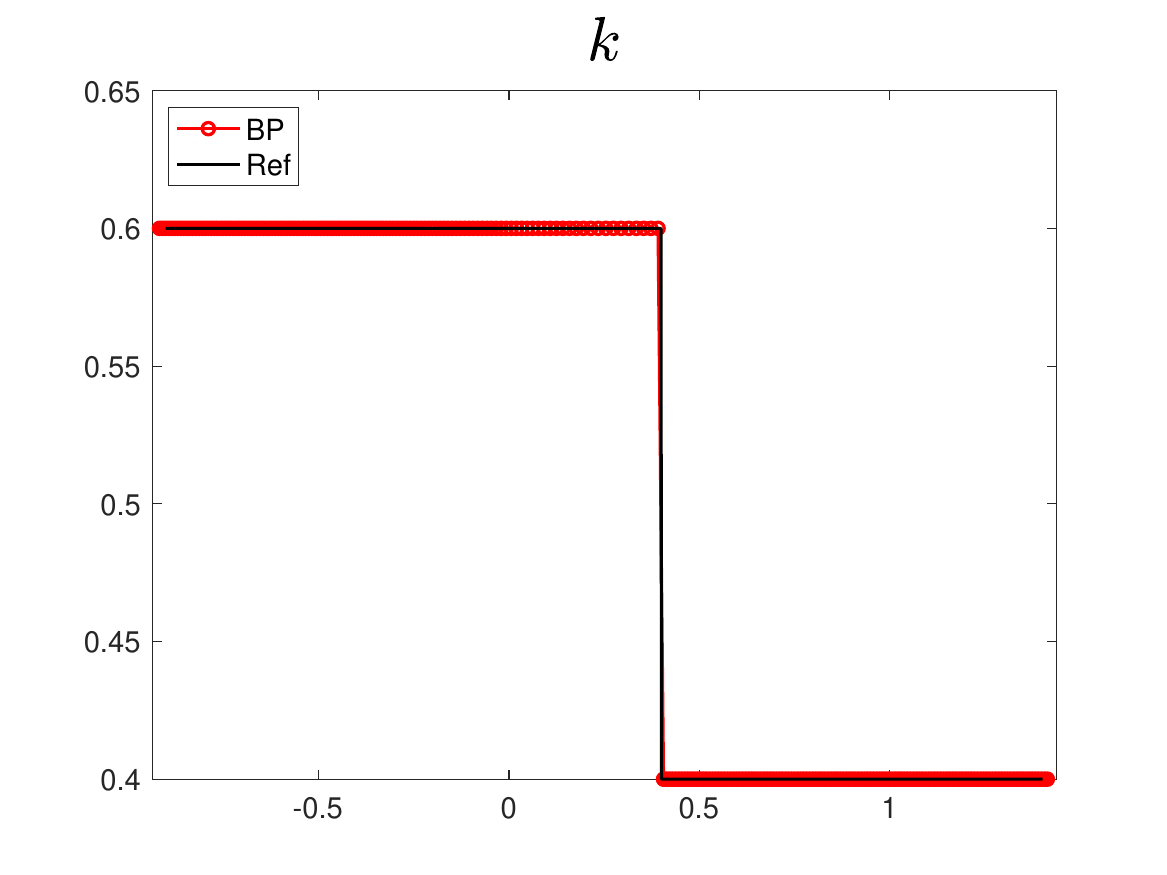}}}\\
	        \caption{\sf Example \ref{exam4}, $t = 1$, and $N_\xi = 500$. Top: Test T1; Bottom: Test T2.}
	        \label{T4}
	    \end{center}
	\end{figure}

	To demonstrate the superiority of this algorithm and the importance of maintaining the speed upper bound, we constructed a specific test, referred to as T3, to compare the BP scheme presented in this paper with the BP-OEDG scheme in \cite{CHEN20251135007}, which does not maintain the speed upper bound. The relevant results are presented in Figure \ref{T09}, which illustrates that the absence of a preserved speed upper bound results in significant instability under conditions of extremely low density.

	\begin{figure}[hbtp]
	    \begin{center}
	        \mbox{
	        {\includegraphics[width = 0.45\textwidth,trim=25 15 35 10,clip]{./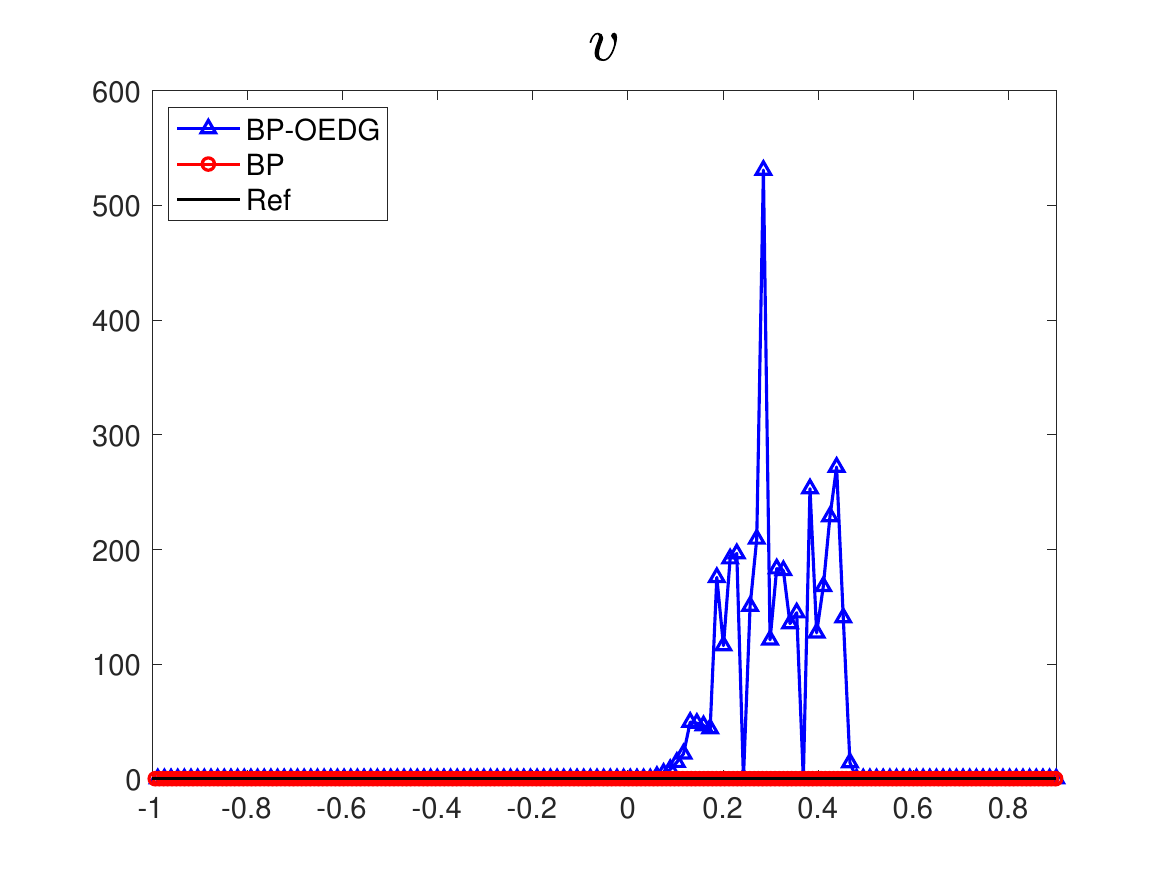}}
	        {\includegraphics[width = 0.45\textwidth,trim=25 15 35 10,clip]{./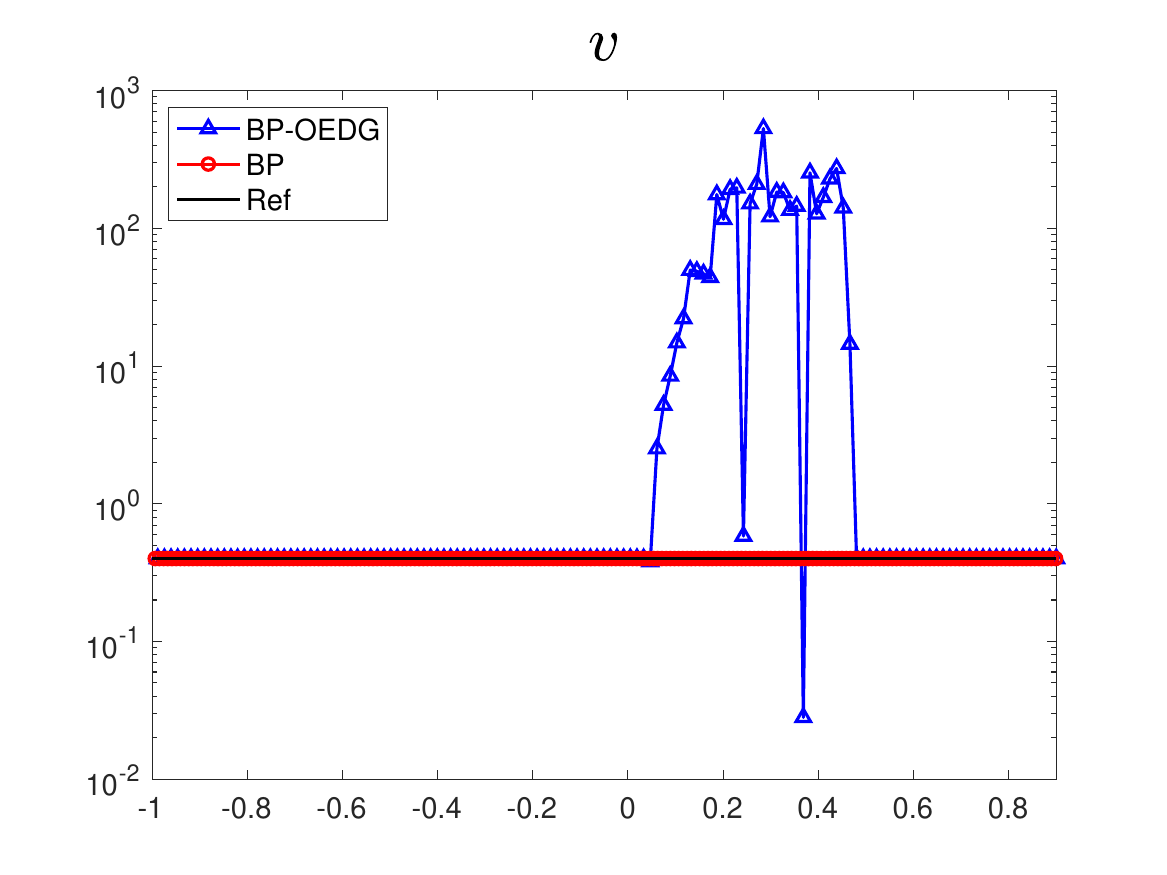}}}\\
	        \caption{\sf Example \ref{exam4}, $t = 0.03$, and $N_\xi = 500$. Left: linear scale; Right: logarithmic scale.}
	        \label{T09}
	    \end{center}
	\end{figure}

	Next, we test a benchmark problem from \cite{betancourt2018random}, which demonstrates the interaction between a contact discontinuity and a 1-shock. Here, both parameters $\gamma$ and $v_{\rm ref}$ are set to $3$. The initial data are given by
	\begin{equation}
	\label{RiemannARZ3}
	    (\phi, v)(\xi, 0) = \left\{
	    \begin{aligned}
	        &(0.4762, 0.092) \quad &&{\rm if} \; \xi < - 0.05, \\
	        &(0.2, 0.092) \quad &&{\rm if} \; \xi \in [- 0.05, 0.05], \\
	        &(0.4, 0.036) \quad &&{\rm otherwise}.
	    \end{aligned}
	    \right.
	\end{equation}
	From \eqref{RiemannARZ3}, a contact discontinuity occurs at $\xi = -0.05$, while a 1-wave connects the data at both ends of $\xi = 0.05$. As the two waves will collide at approximately $t \approx 0.9$, we test this example at final times $t = 0.5$ and $t = 1$ , with the results shown in Figure \ref{Tt5}. Obviously, the numerical solutions obtained from the NonBP scheme progressively deteriorate, while the BP scheme accurately captures the location of discontinuities and effectively suppresses unphysical oscillations.

	\begin{figure}[hbtp]
	    \begin{center}
	        \mbox{
	        {\includegraphics[width = 0.33\textwidth,trim=25 15 35 10,clip]{./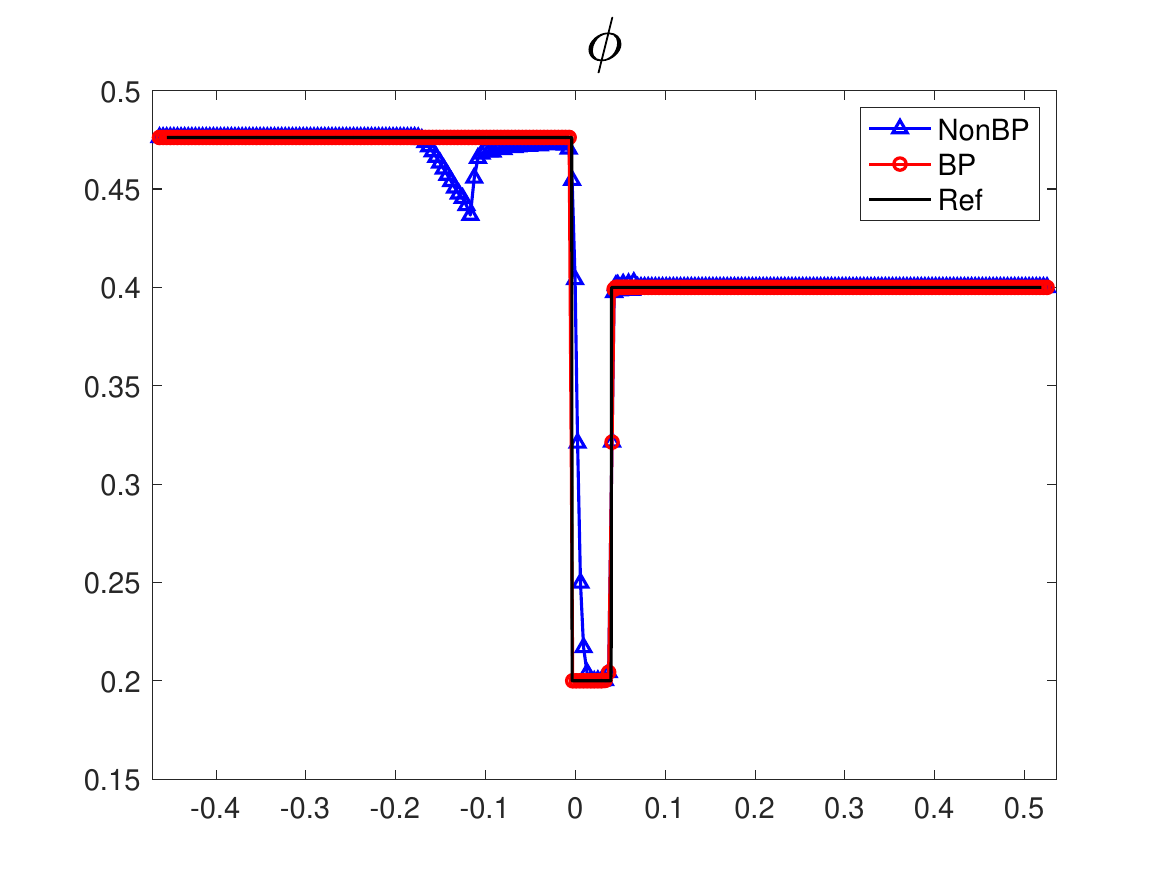}}
	        {\includegraphics[width = 0.33\textwidth,trim=25 15 35 10,clip]{./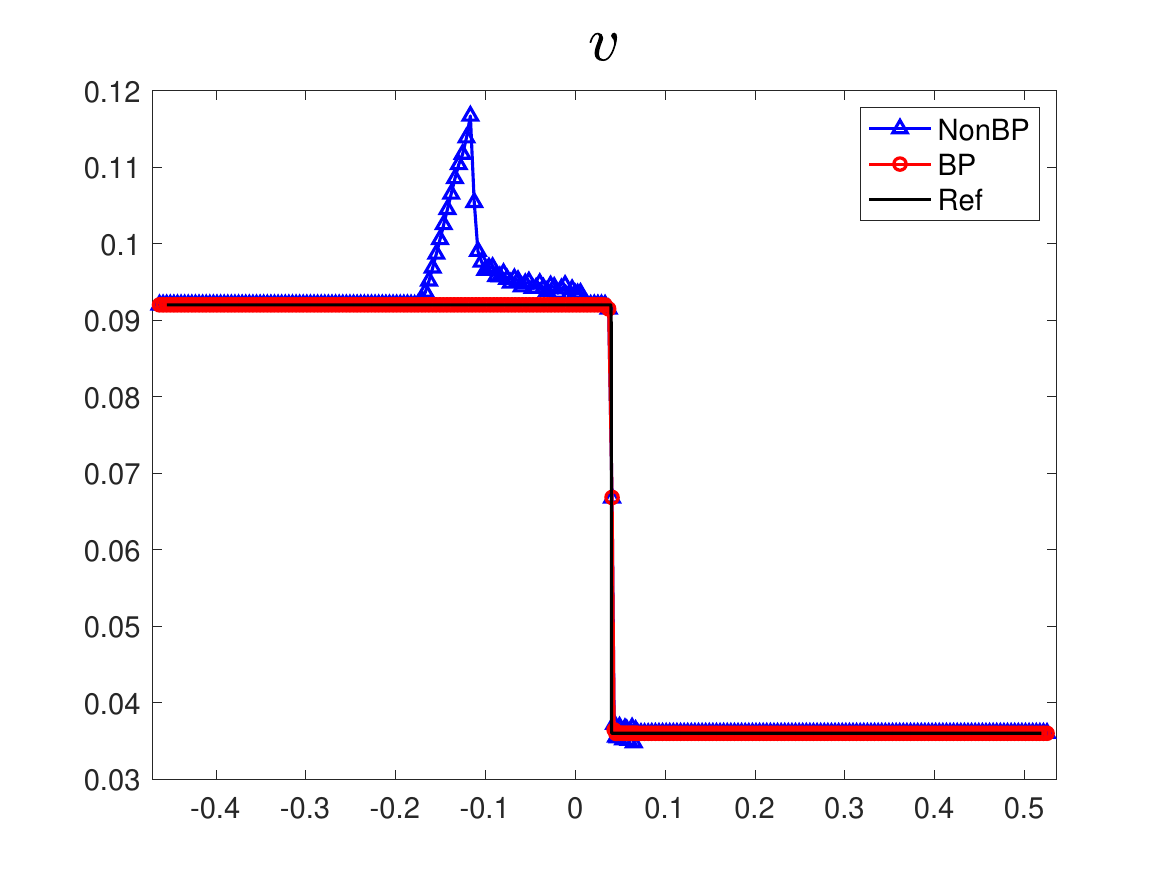}}
	        {\includegraphics[width = 0.33\textwidth,trim=25 15 35 10,clip]{./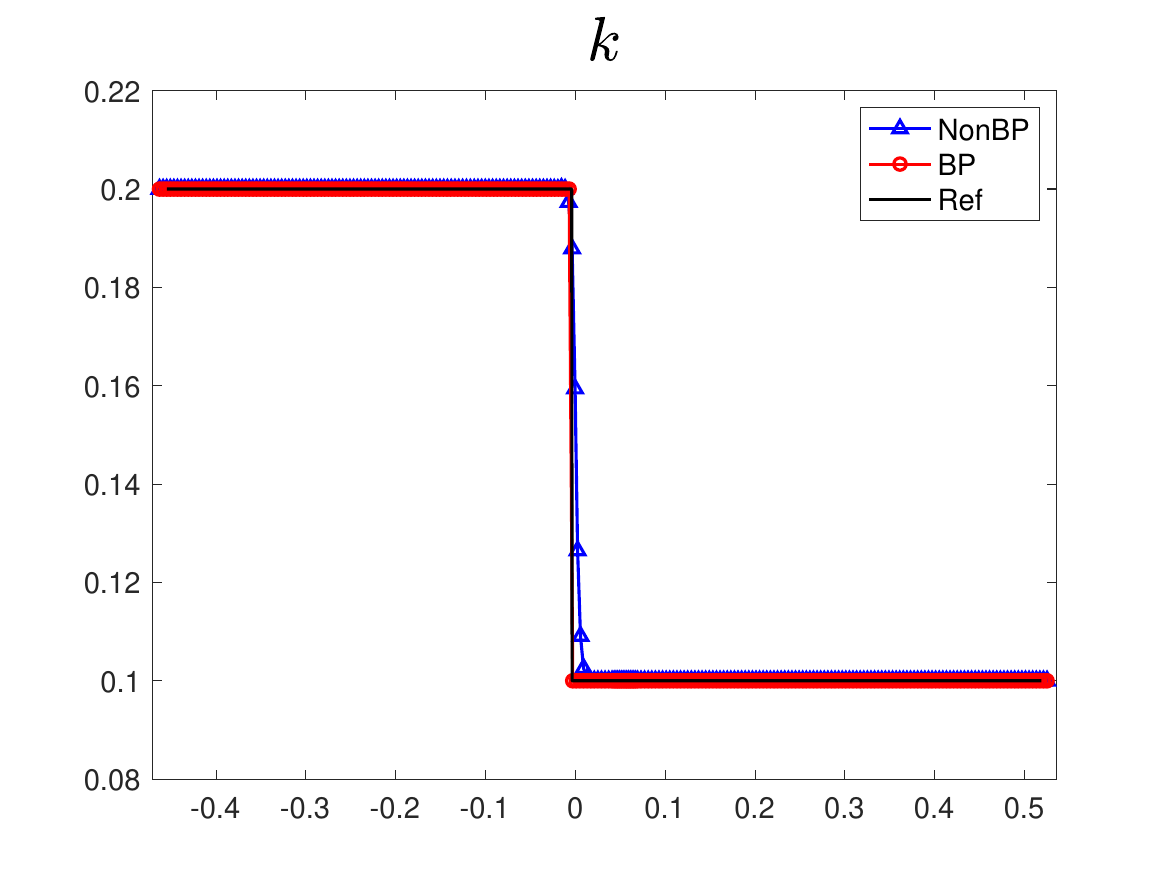}}}\\
	        \mbox{
	        {\includegraphics[width = 0.33\textwidth,trim=25 15 35 10,clip]{./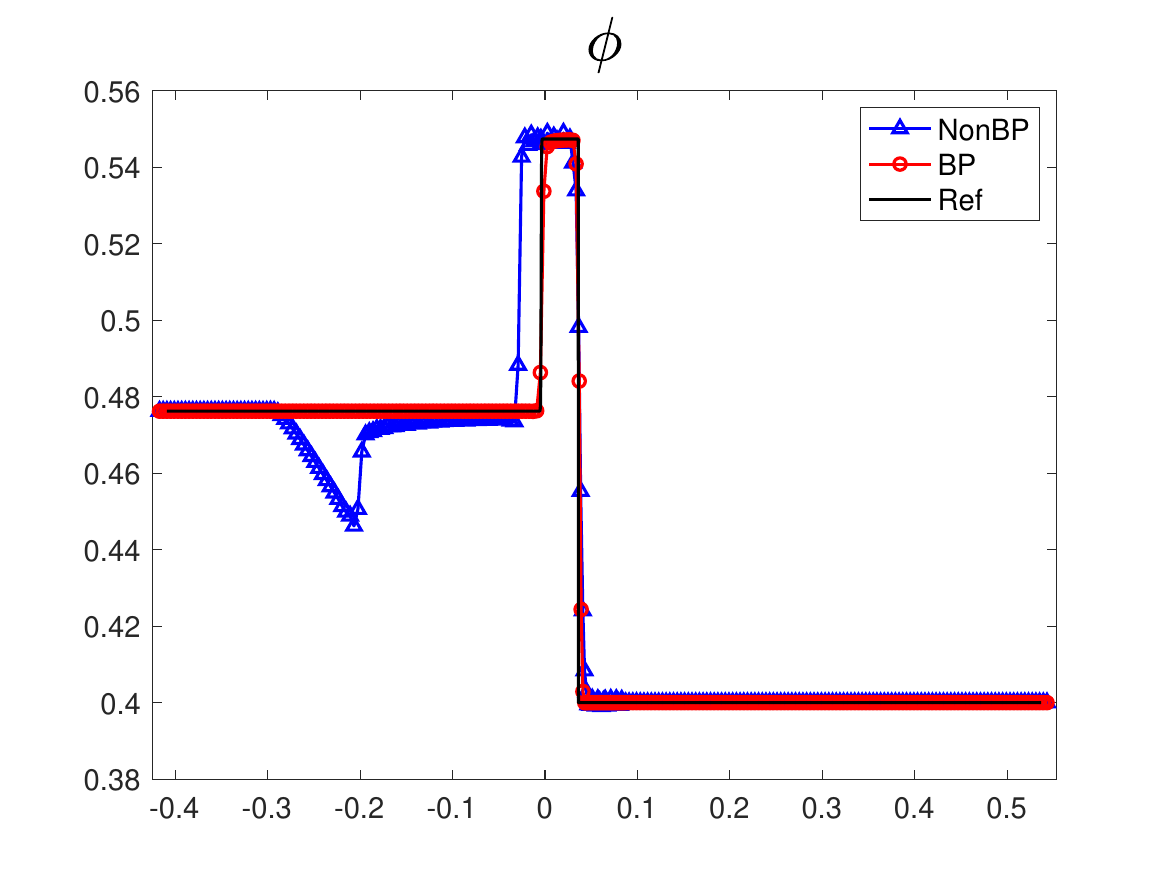}}
	        {\includegraphics[width = 0.33\textwidth,trim=25 15 35 10,clip]{./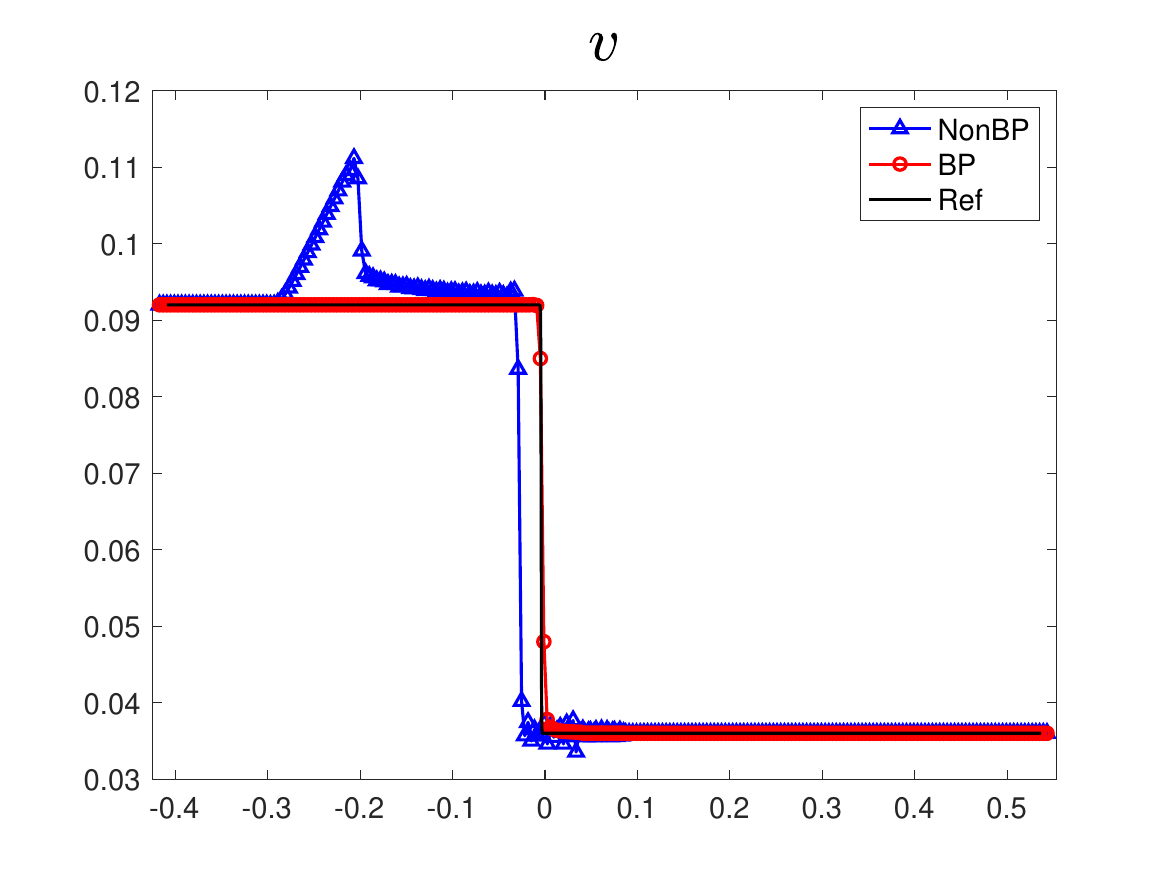}}
	        {\includegraphics[width = 0.33\textwidth,trim=25 15 35 10,clip]{./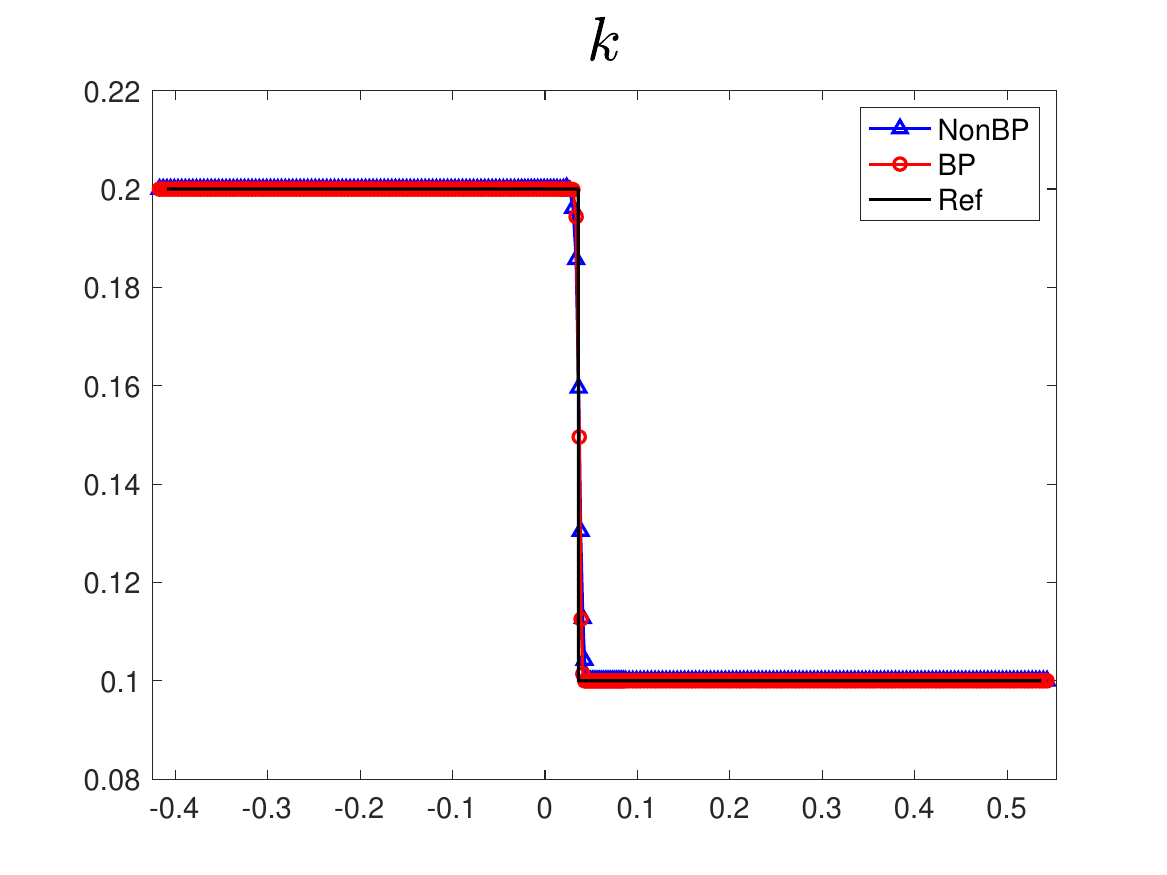}}}\\
	        \caption{\sf Example \ref{exam4}, initial data \eqref{RiemannARZ3} with $N_\xi = 500$. Top: $t = 0.5$; Bottom: $t = 1$.}
	        \label{Tt5}
	    \end{center}
	\end{figure}

\end{exa}

\begin{exa} \label{exam5}

	We now examine two Riemann problems associated with the sedimentation model \cite{betancourt2018random}, with initial conditions T4 and T5 listed in Table \ref{Riemann}. Figure \ref{789} and \ref{086} display the numerical results of T4 and T5 at $t = 1$, respectively. In Figure \ref{789}, the NonBP scheme produces an anomalous solution for the isolated contact discontinuity, while the solution generated by BP scheme is consistent with the reference solution. A mixed 1-wave, composed of both a shock and a rarefaction, as well as a contact discontinuity, is shown in Figure \ref{086}. These results further validate that the BP scheme significantly outperforms the NonBP scheme.

	\begin{figure}[hbtp]
	    \begin{center}
	        \mbox{
	        {\includegraphics[width = 0.33\textwidth,trim=25 15 35 10,clip]{./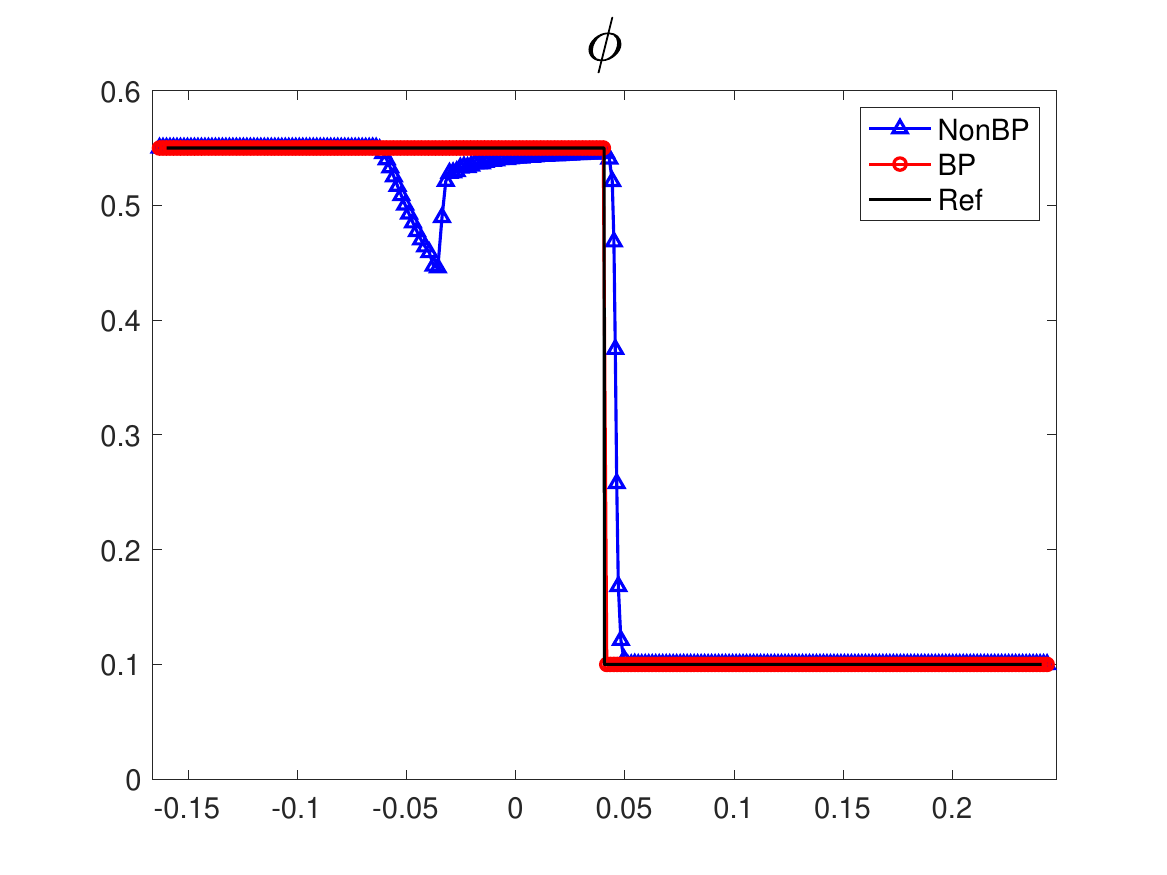}}
	        {\includegraphics[width = 0.33\textwidth,trim=25 15 35 10,clip]{./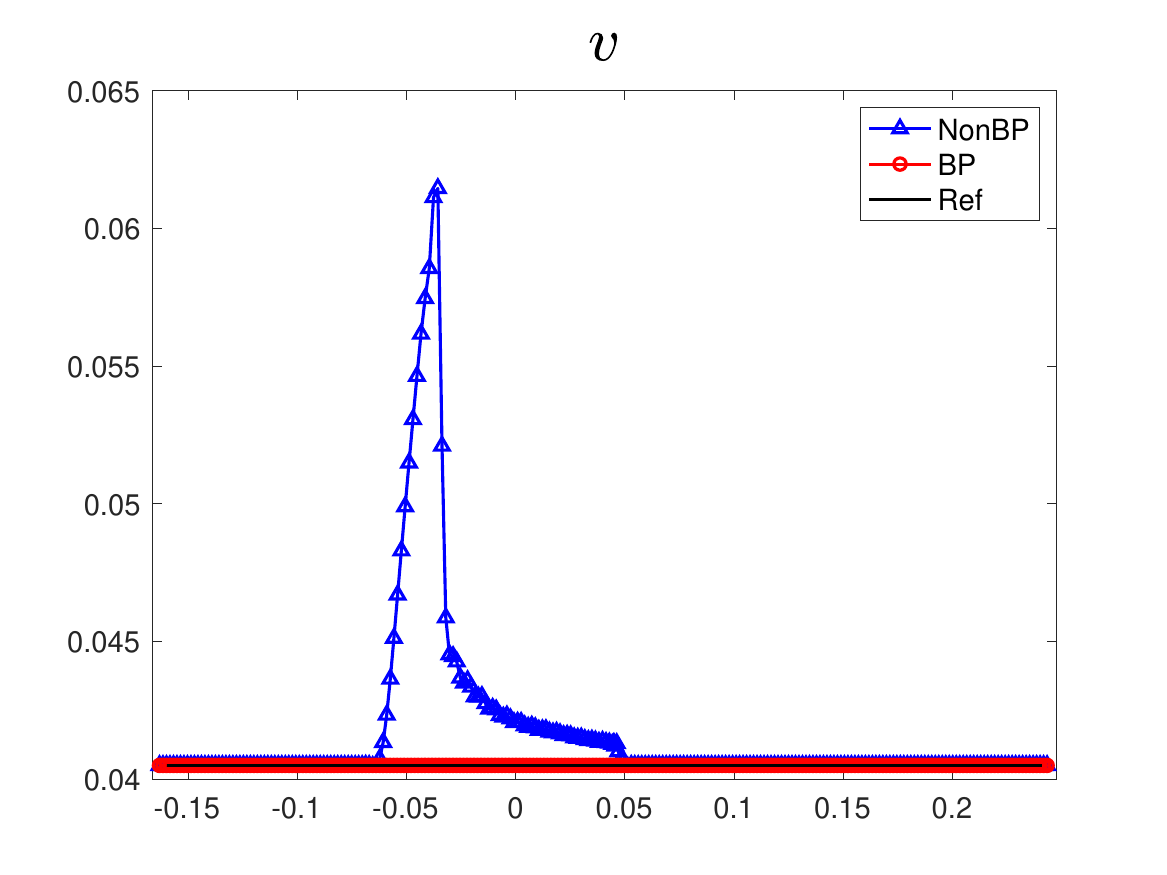}}
	        {\includegraphics[width = 0.33\textwidth,trim=25 15 35 10,clip]{./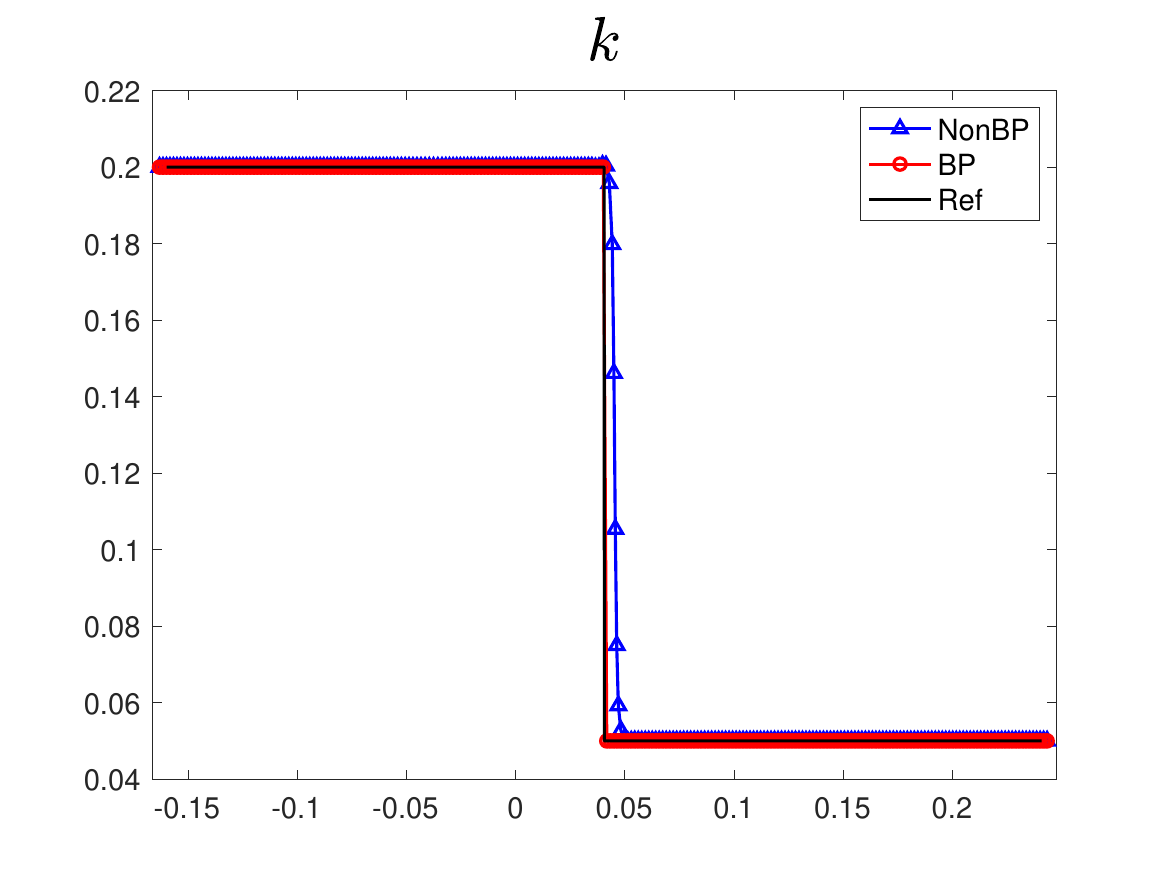}}}\\
	        \caption{\sf Example \ref{exam5}, Test T4, $t = 1$, and $N_\xi = 500$.}
	        \label{789}
	    \end{center}
	\end{figure}

	\begin{figure}[hbtp]
	    \begin{center}
	        \mbox{
	        {\includegraphics[width = 0.33\textwidth,trim=25 15 35 10,clip]{./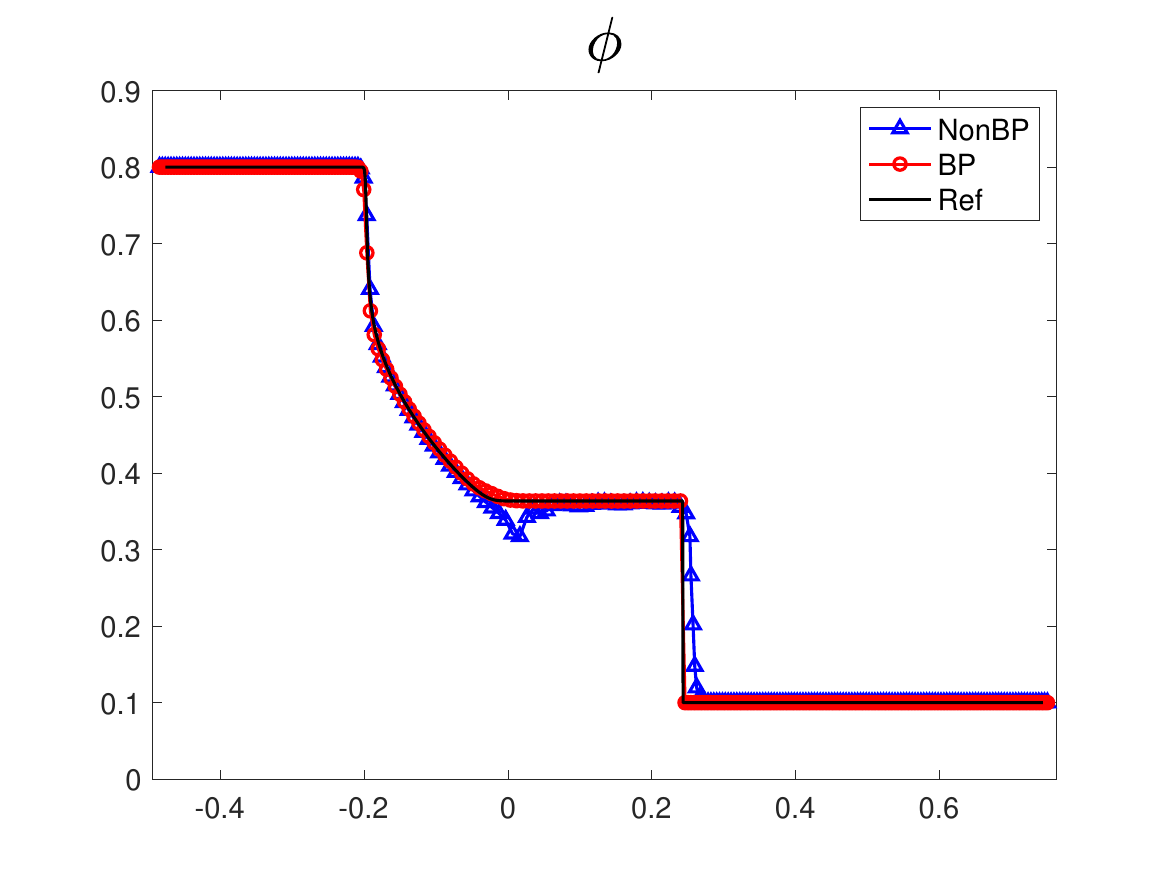}}
	        {\includegraphics[width = 0.33\textwidth,trim=25 15 35 10,clip]{./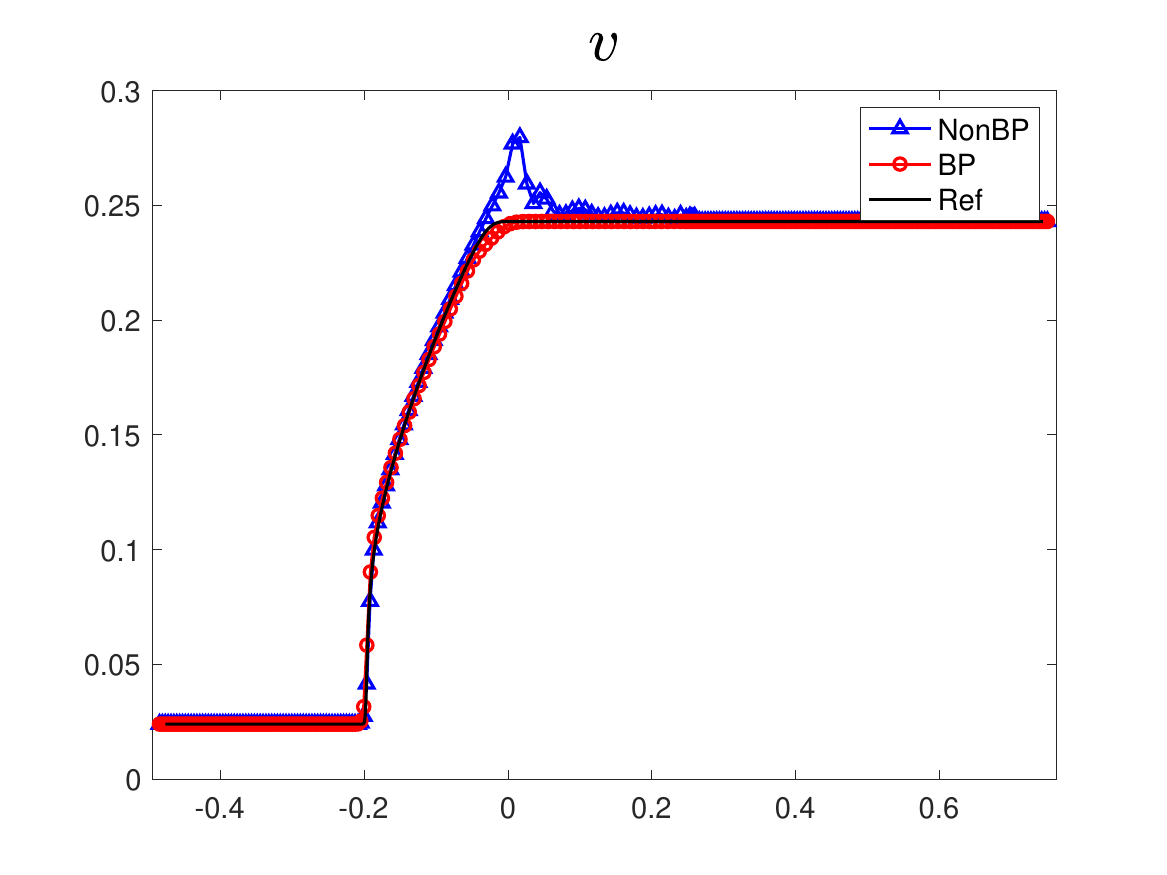}}
	        {\includegraphics[width = 0.33\textwidth,trim=25 15 35 10,clip]{./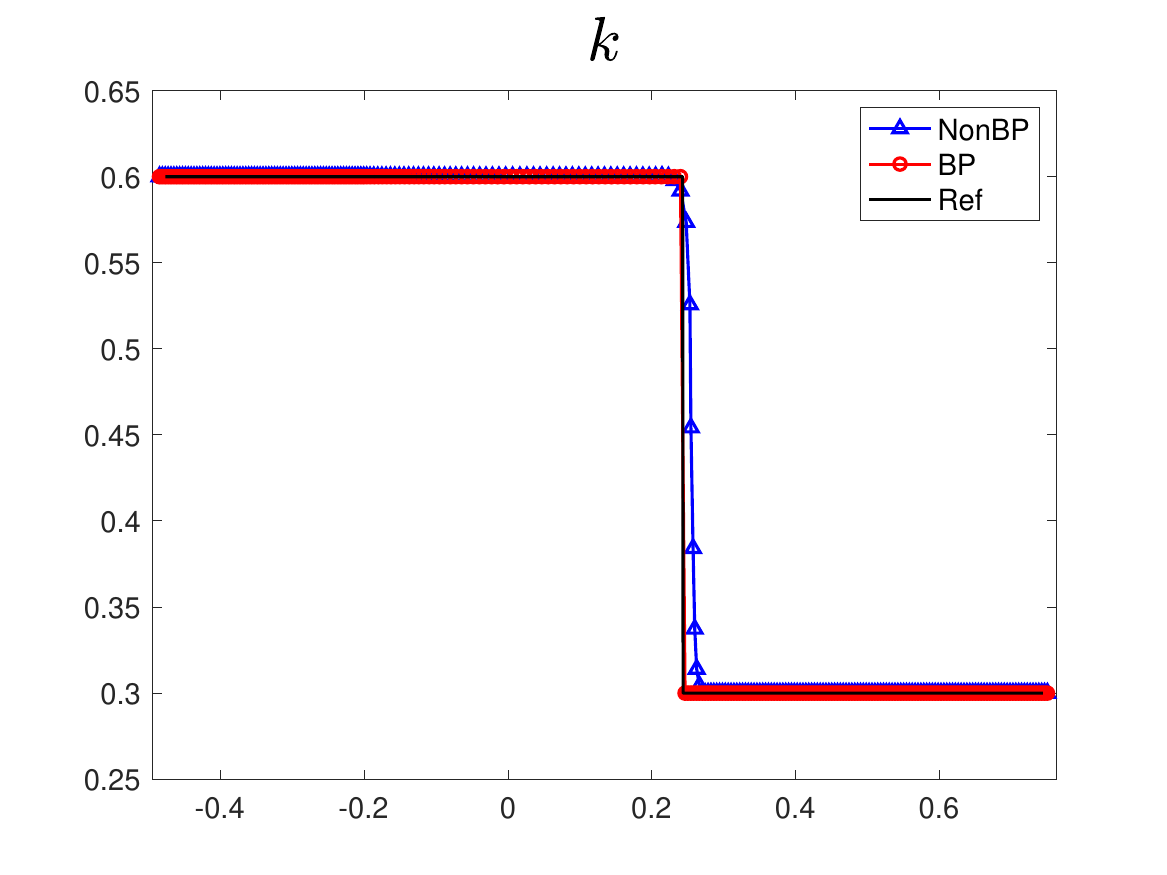}}}\\
	        \caption{\sf Example \ref{exam5}, Test T5, $t = 1$, and $N_\xi = 500$.}
	        \label{086}
	    \end{center}
	\end{figure}
\end{exa}

\subsection{Numerical experiments for ARZ model on road networks}\label{network}

In this section, we will extend the BP schemes to ARZ model on road networks. 
Consider a road network consists of $N_R$ road segments and $N_S$ junctions, we let $x^{(r)}_{L}$ and $x^{(r)}_{R}$ respectively label the physical locations of entry and exit of Road $r$.
The numerical solution on the $r$-th road segment is denoted by $\BU^{(r)}_h$. Let $\delta_{\ell}^-$ (resp. $\delta_{\ell}^+$) denote the set of indices representing incoming (resp. outgoing) roads connected to the Junction $\ell$. 
At road junctions, proper coupling conditions are needed to allocate the traffic fluxes from incoming roads ($i \in \delta^-_\ell$) to outgoing roads ($j \in \delta^+_\ell$).
Given the traffic states in the vicinity of the junction, namely,
\[
\textrm{the input of coupling condition:}
\quad
\left \{ \BU^{(i)}_h \left(x_L^{(i)} \right) \right \}_{i \in \delta_\ell^-} {\rm and} \quad
\left\{ \BU^{(j)}_h \left(x_R^{(j)} \right) \right\}_{j \in \delta_\ell^+},
\]
a coupling condition should be employed to determine the corresponding traffic states immediately outside the physical domain, that is,
\[
\textrm{the output of coupling condition:}
\quad
\left \{ \BU^{(i)}_h \left(x \leq x_L^{(i)} \right) \right \}_{i \in \delta_\ell^-} {\rm and} \quad
\left\{ \BU^{(j)}_h \left(x \geq x_R^{(j)} \right) \right\}_{j \in \delta_\ell^+},
\]
which are required to evolve the numerical solutions $\{\BU^{(r)}_h\}_{r=1}^{N_r}$ to the next time step, and are also used in updating global invariant domains (see \eqref{764}) and local invariant domains (see \eqref{eq:1795}). 
\begin{figure}[hbtp]
    \begin{center}
        \mbox{
        {\includegraphics[width = 0.6\textwidth,trim=80 365 80 320,clip]{./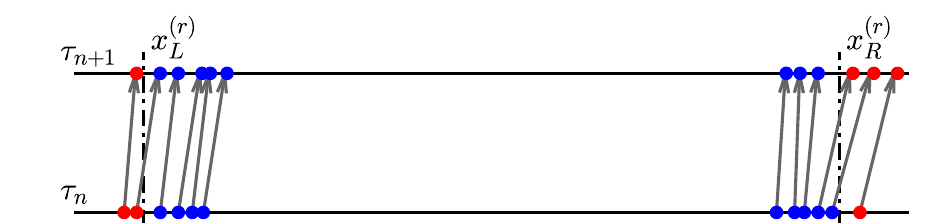}}}\\
        \caption{\sf Moving mesh.}
        \label{08}
    \end{center}
\end{figure}

The numerical scheme presented in this paper utilizes a moving mesh approach. Figure \ref{08} demonstrates the mesh movement within the physical domain between time levels $\tau_n$ and $\tau_{n+1}$. 
In this figure, blue points represent locations within the physical domain, whereas red points lie outside the physical domain (in the computational domain, red points correspond to ghost cells). 
As mesh movement does not ensure that computational points will exactly align with the physical boundaries 
at the subsequent time level, interpolation is required to estimate values at $x_L^{(r)}$ and $x_R^{(r)}$ using nearby interior points. 
In this work, we select the five closest points to $x_L^{(r)}$  (resp. $x_R^{(r)}$) within the physical domain and construct a fourth-degree polynomial to interpolate the values $\phi_L^{(r),*}$, $v_L^{(r),*}$, and $J_L^{(r),*}$ (resp. $\phi_R^{(r),*}$, $v_R^{(r),*}$, and $J_R^{(r),*}$) at the boundaries. Then setting
\begin{equation*}
	\phi_M^{(r)} = \max \{\phi_M^{(r),*}, \varepsilon \}, \quad v_M^{(r)} = \max \{v_M^{(r),*}, \varepsilon \}, \quad J_M^{(r)} = \max \{J_M^{(r),*}, \varepsilon \}, \quad M = L \; {\rm or} \; R,
\end{equation*}
makes it more physical, and we use the new values $\phi_M^{(r)}$, $v_M^{(r)}$, and $J_M^{(r)}$ to construct the value $\BU^{(r)}_h \left(x_M^{(r)} \right)$.

Various options for coupling condition are available in the literature, for example, \cite{garavello2006traffic,herty2006coupling,herty2006optimization,haut2007second,gottlich2021second}. 
For brevity, we consider HB network \cite{haut2007second} in the following numerical experiments.
For $j \in \delta_\ell^+$ and $i \in \delta_\ell^-$, $q_{ji} \geq 0$ denotes the traffic flux from Road $i$ to Road $j$, and we denote by $q_i^{-}$ (resp. $q_j^+$) the total incoming (resp. outgoing) flux of Road $i$ (resp. $j$), namely, $q_i^- = \sum_{j \in \delta_\ell^+} q_{ji}$, $q_j^+ = \sum_{i \in \delta_\ell^-} q_{ji}$.
For the HB network, the fluxes $\mathbf{q}_j = (q_{ji})_{i \in \delta_\ell^-}$ of an outgoing Road $j$, are proportional to a given vector $\boldsymbol{\beta} = (\beta_i)_{i \in \delta_\ell^-}$ , which is determined by demand from Road $i$.
For simplicity, in the following examples, we assume the lengths of all road segments to be 1.

\begin{exa} \label{net1}
	We examine an HB network with $\gamma = 1$, which includes one diverging junction, one incoming road (Road 1), and two outgoing roads (Roads 2 and 3). The initial conditions for these roads are specified as follows:
	\begin{equation*}
	\label{eqnet1}
	    \begin{aligned}
	        &\phi^{(1)}(\xi, 0) = 0.5, \quad \phi^{(2)}(\xi, 0) = \left\{
	        \begin{aligned}
	            &0.5 \quad &&{\rm if} \; \xi < 0.5, \\
	            &0.1 \quad &&{\rm otherwise},
	        \end{aligned}
	        \right. \\
	        &\phi^{(3)}(\xi, 0) = \left\{
	        \begin{aligned}
	            &0.6 \quad &&{\rm if} \; \xi \in [0.2, 0.4] \cup [0.6, 0.8], \\
	            &0.5 \quad &&{\rm otherwise},
	        \end{aligned}
	        \right. \\
	        &v^{(1)}(\xi, 0) = v^{(2)}(\xi, 0) = 0.5,\quad v^{(3)}(\xi, 0) = 1 - \phi^{(3)}(\xi, 0),
	    \end{aligned}
	\end{equation*}
	with $v_{\rm ref} = 1$ and a final time of $t = 0.2$. The traffic flow entering the junction from incoming Road 1 is equally distributed between outgoing Roads 2 and 3. To assess the local BP and global BP schemes, we present the results in Figure \ref{T9}. For the global BP scheme, a non-physical velocity overshoot is observed, as $v^{\rm G}_{\rm max} (\approx 0.85)$ significantly exceeds $v^{\rm L}_{\rm max}(\approx 0.50)$ near $x = 0.1$. In this scenario, enforcing a global constraint $v \leq v^{\rm G}_{\rm max}$ does not prevent the $v$-overshoot in the vicinity. The local BP scheme performs exceptionally well because $v^{\rm L}_{\rm max}$ is derived from the computational stencil.

	\begin{figure}[hbtp]
	    \begin{center}
	        \mbox{
	        {\includegraphics[width = 0.49\textwidth,trim= 25 55 41 75,clip]{./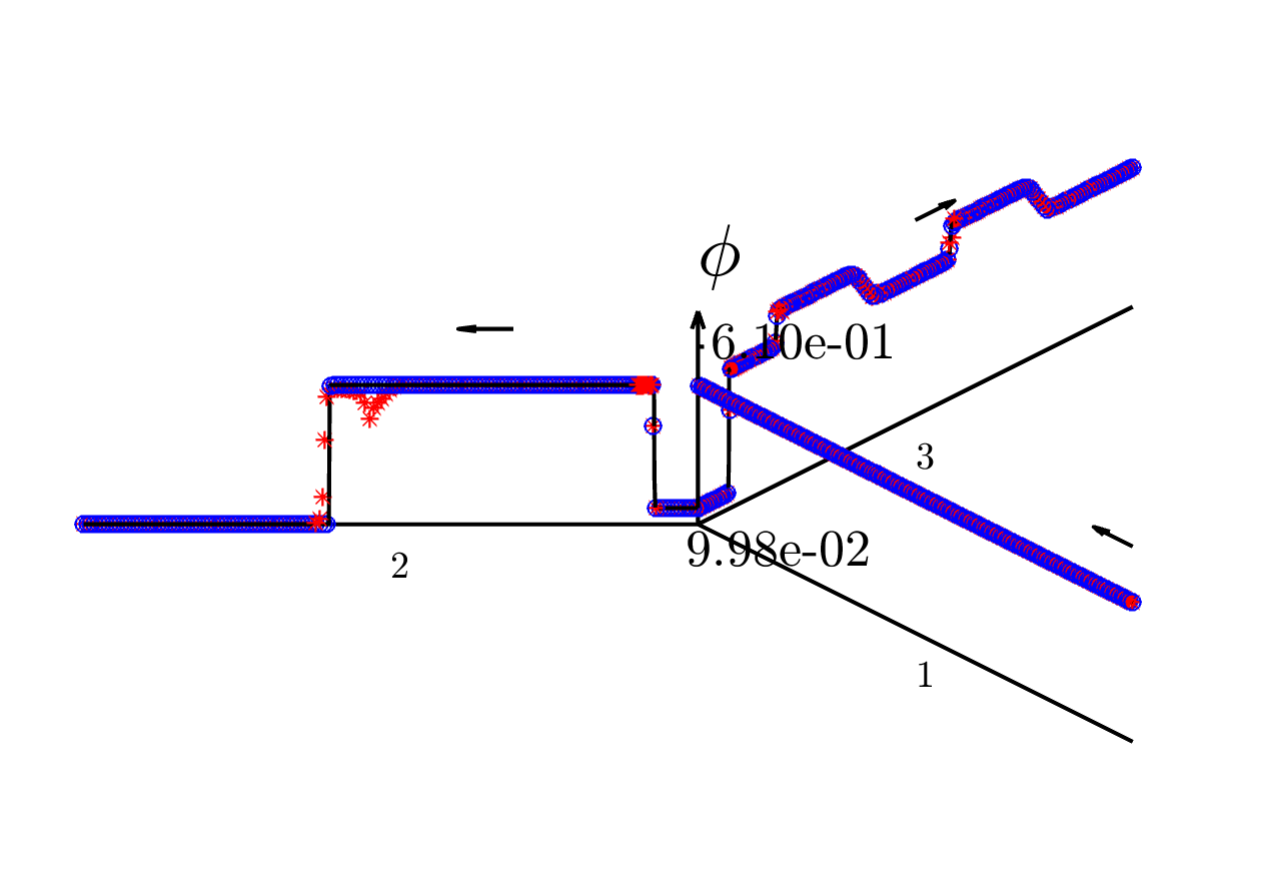}}
	        {\includegraphics[width = 0.49\textwidth,trim= 25 55 41 75,clip]{./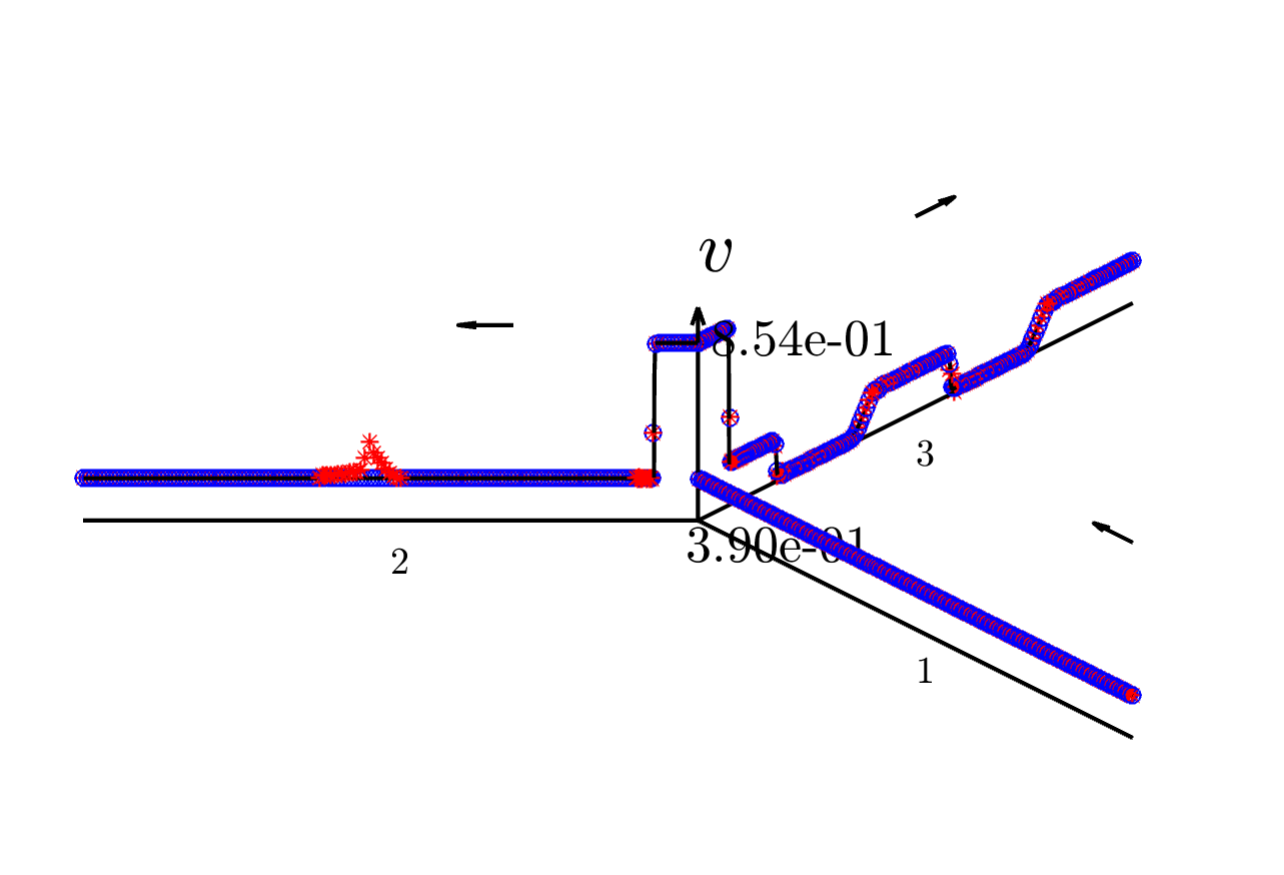}}}\\
	        \caption{\sf Example \ref{net1}, diverging junction, $t = 0.2$, and $N_\xi = 700$. Red: global BP scheme; Blue: local BP scheme; Black: reference solutions.}
	        \label{T9}
	    \end{center}
	\end{figure}

\end{exa}

\begin{exa} \label{net2}
	In this example, we consider an HB network comprising a merging junction connected to one outgoing road (Road 3) and two incoming roads (Road 1 and 2). The initial conditions for these roads are given by
	\begin{equation*}
	\label{eqnet2}
	    \begin{aligned}
	        &\phi^{(1)}(\xi, 0) =\phi^{(2)}(\xi, 0) = 0.4, \quad \phi^{(3)}(\xi, 0) = \left\{
	        \begin{aligned}
	            &0.4 \quad &&{\rm if} \; \xi < 0.5, \\
	            &10^{- 10} \quad &&{\rm otherwise},
	        \end{aligned}
	        \right. \\
	        &v^{(r)}(\xi, 0) = 0.4, \quad r = 1, 2, 3.
	    \end{aligned}
	\end{equation*}
	with $v_{\rm ref} = 1$, $ \gamma = 1$, and a final time of $t = 0.5$. The BP scheme is employed to address this scenario. The numerical results, presented in Figure \ref{T10}, clearly show that the discontinuities are well resolved without oscillations, even at very low densities. Notably, the NonBP scheme fails when simulating such low-density conditions.

	\begin{figure}[hbtp]
	    \begin{center}
	        \mbox{
	        {\includegraphics[width = 0.49\textwidth,trim= 25 55 41 60,clip]{./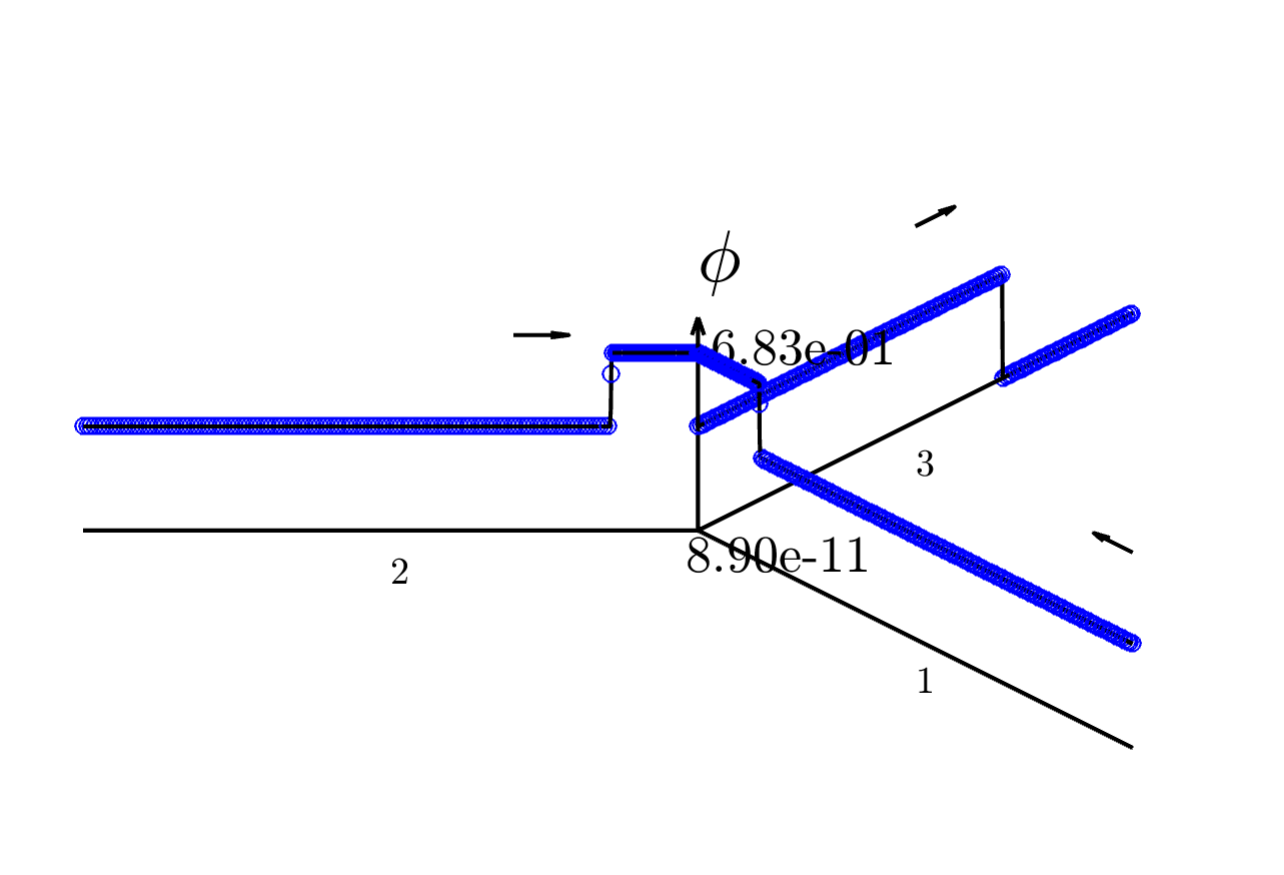}}
	        {\includegraphics[width = 0.49\textwidth,trim= 25 55 41 60,clip]{./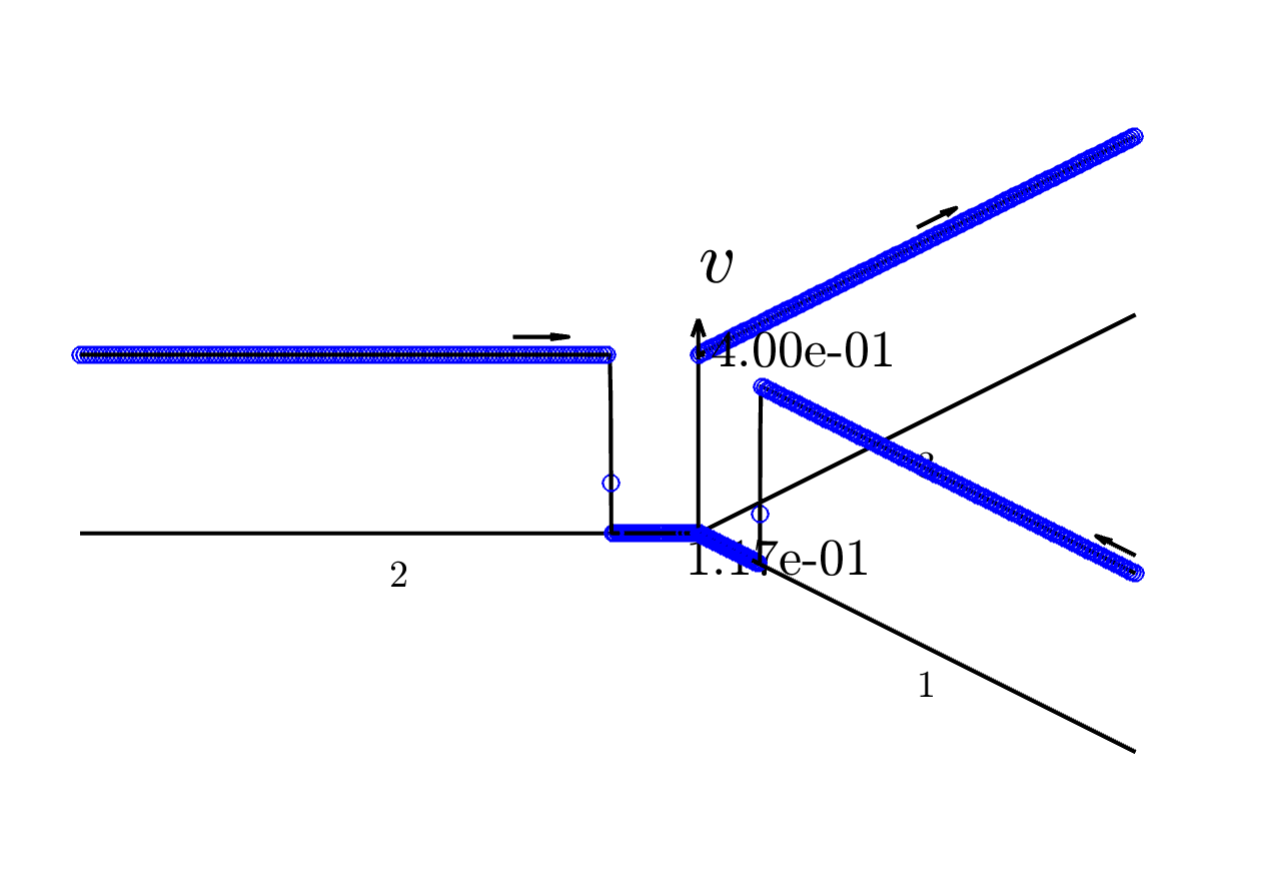}}}\\
	        \caption{\sf Example \ref{net2}, merging junction, $t = 0.5$, and $N_\xi = 700$. Blue: local BP scheme; Black: reference solutions.}
	        \label{T10}
	    \end{center}
	\end{figure}
\end{exa}

\begin{exa} \label{net3}
	A network comprising 30 roads and 24 junctions will be tested in the final example. As depicted in the left heatmap of Figure \ref{T11}, this network features five types of initial conditions for $\phi$ with a consistent $v^{(\cdot)}(\xi, 0) \equiv 0.5$ for all road segments:
	\begin{equation}
	\label{eqnet3}
	    \begin{aligned}
	        &\phi^{(1)}(\xi, 0) = \left\{
	        \begin{aligned}
	            &0.05 \quad &&{\rm if} \; \xi \in [0.4, 0.6], \\
	            &0.1 \quad &&{\rm otherwise},
	        \end{aligned}
	        \right. \quad 
	        &&\phi^{(2)}(\xi, 0) = \left\{
	        \begin{aligned}
	            &10^{- 10} \quad &&{\rm if} \; \xi \in [0.1, 0.5], \\
	            &0.1 \quad &&{\rm otherwise},
	        \end{aligned}
	        \right. \\
	        &
	        \begin{aligned}
	            &\phi^{(3)}(\xi, 0) = 0.1, \\
	            &\phi^{(4)}(\xi, 0) = 0.1 + 0.05 \sin{(2 \pi \xi)}, 
	        \end{aligned}
	        \quad &&\phi^{(5)}(\xi, 0) = \left\{
	        \begin{aligned}
	            &0.2 \quad &&{\rm if} \; \xi \in [0.4, 0.6], \\
	            &0.1 \quad &&{\rm otherwise}.
	        \end{aligned}
	        \right.
	    \end{aligned}
	\end{equation}
	We set $v_{\rm ref} = 1$, $\gamma = 2$, and a final time of $t = 0.5$. The heatmap displaying numerical results for $\phi$ is shown in the Figure \ref{T11}, and the resulting numerical solution on Roads 2, 4, and 5 are presented in Figure \ref{T12}.

	\begin{figure}[hbtp]
	    \begin{center}
	        \mbox{
	        {\includegraphics[width = 0.49\textwidth,trim= 25 55 41 38,clip]{./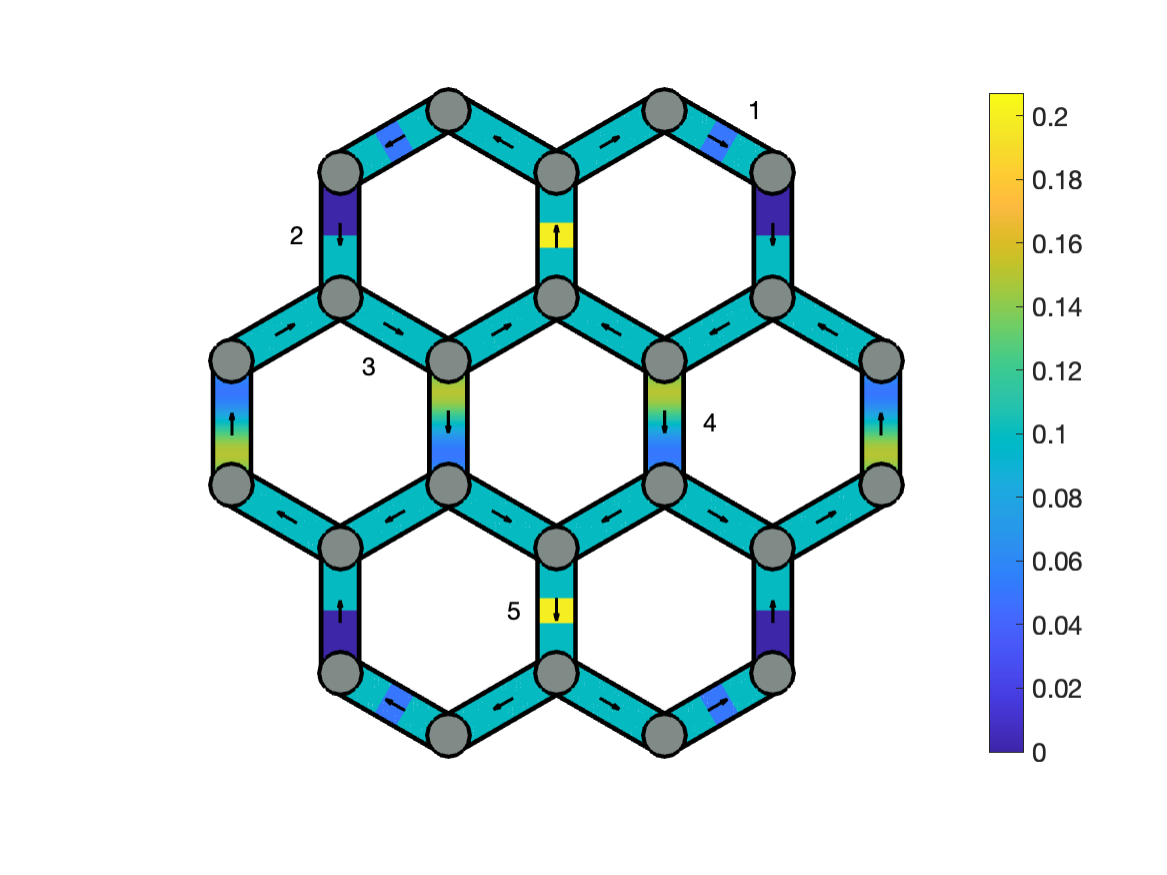}}
	        {\includegraphics[width = 0.49\textwidth,trim= 25 55 41 38,clip]{./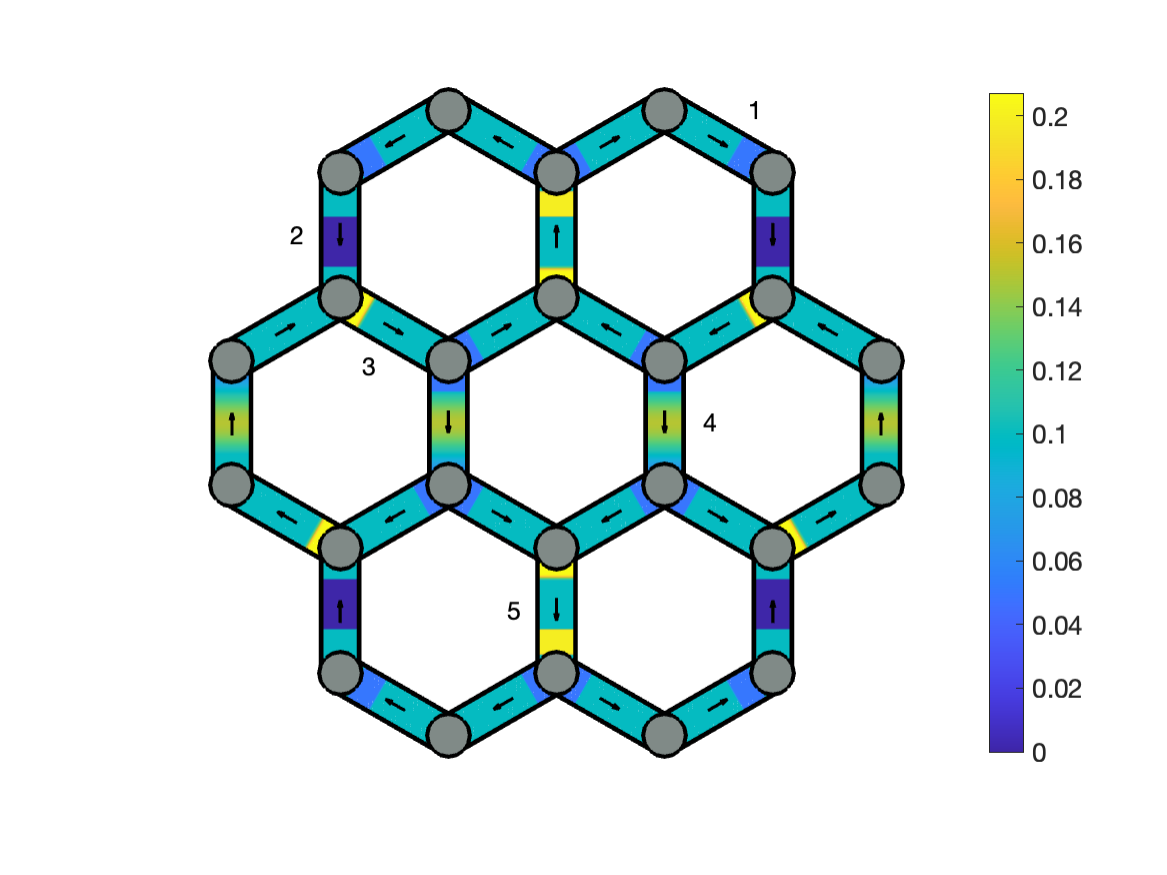}}}\\
	        \caption{\sf Example \ref{net3}, complex network, $N_\xi = 800$. Left: $\phi$ at $t = 0$; Right: $\phi$ at $t = 0.5$.}
	        \label{T11}
	    \end{center}
	\end{figure}

	\begin{figure}[hbtp]
	    \begin{center}
	        \mbox{
	        {\includegraphics[width = 0.33\textwidth,trim=25 15 35 10,clip]{./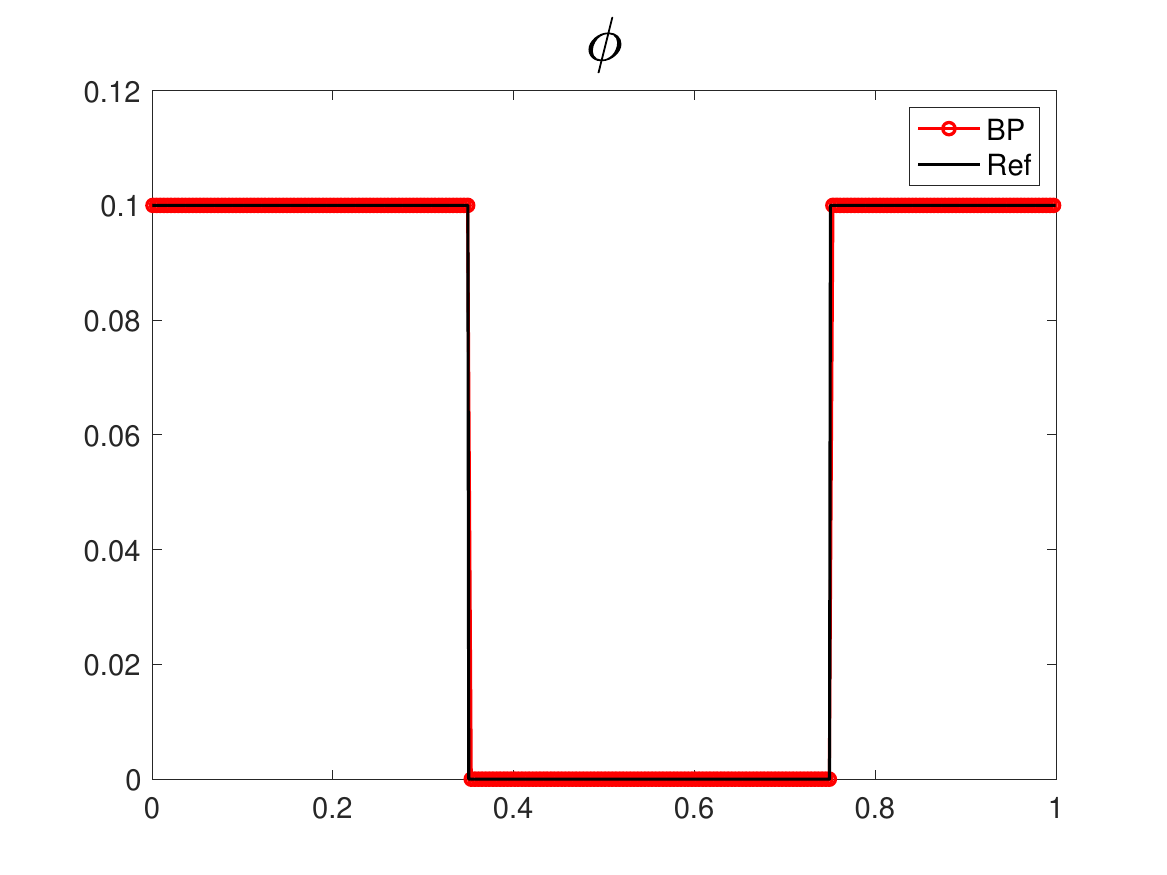}}
	        {\includegraphics[width = 0.33\textwidth,trim=25 15 35 10,clip]{./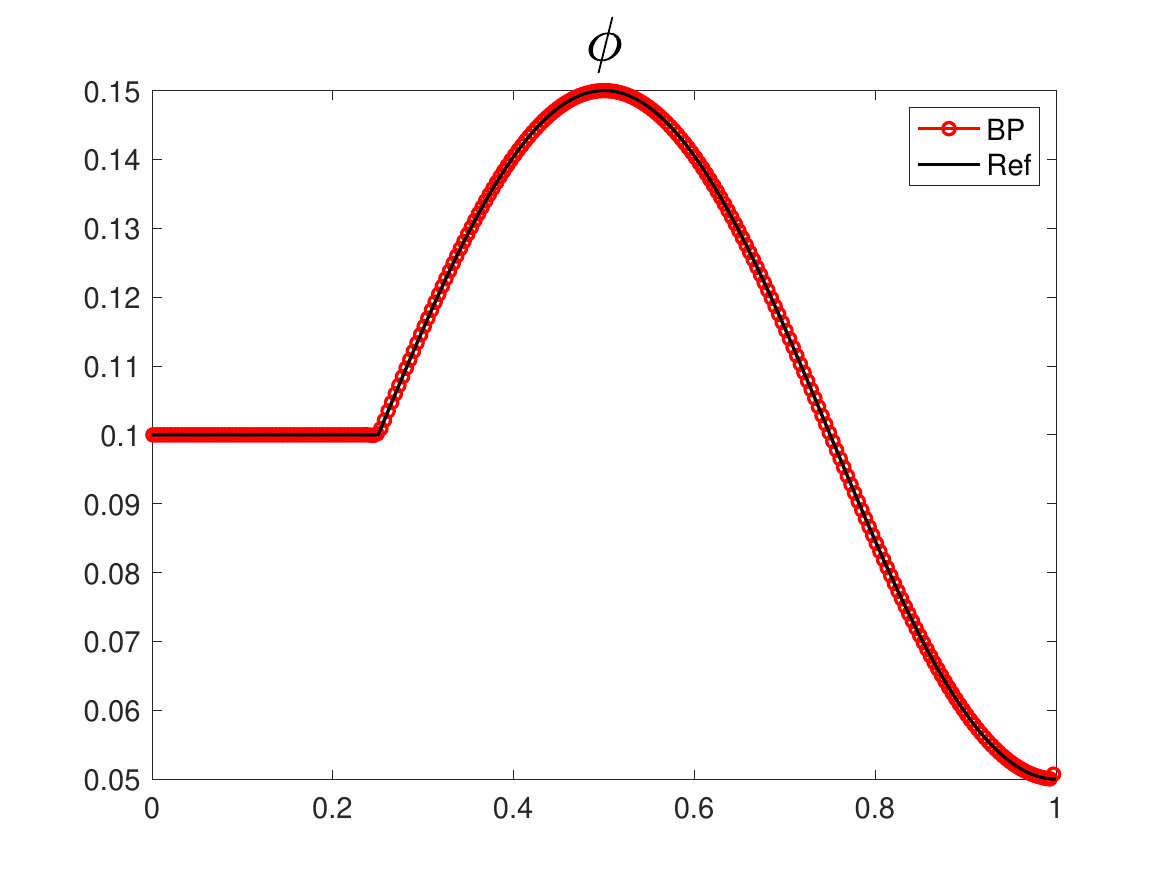}}
	        {\includegraphics[width = 0.33\textwidth,trim=25 15 35 10,clip]{./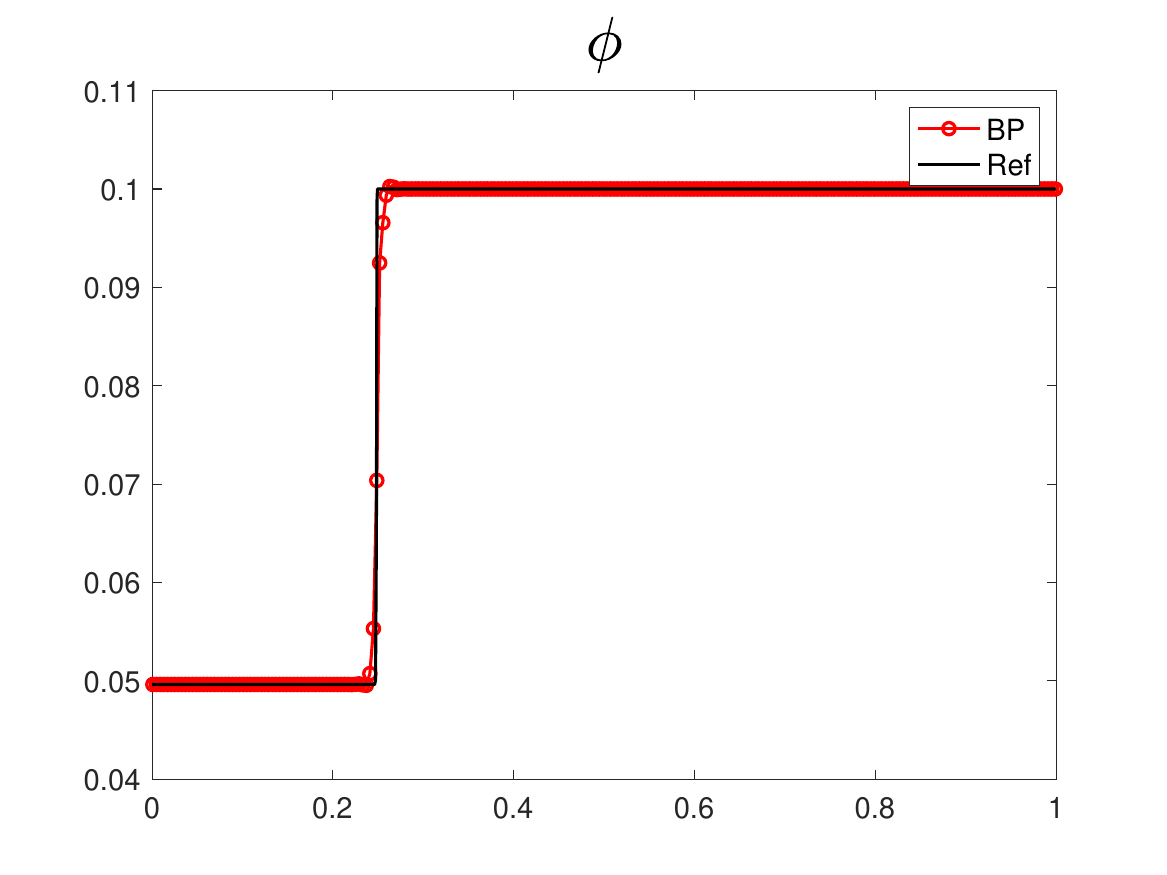}}}\\
	        \caption{\sf Example \ref{net3}, complex network, $t = 0.5$, and $N_\xi = 800$. Left: Road 2; Middle: Road 4; Right: Road 5.}
	        \label{T12}
	    \end{center}
	\end{figure}
\end{exa}

\section{Conclusion} \label{0sec6}
In this study, we investigated the numerical approximation of Temple-class systems, which are applicable across a variety of fields including one-dimensional two-phase flow, elasticity, traffic flow, and sedimentation. Utilizing a moving mesh approach enabled us to navigate the challenges posed by non-convex invariant domains effectively. The development of a bound-preserving (BP) and conservative numerical scheme that meets the demanding requirements of these systems underscore the robustness and adaptability of our methodology.
Numerical experiments validate the effectiveness of our BP methods, demonstrating their capability to ensure the positivity of $\phi$ and the adherence of Riemann invariants to the invariant domains within the simulations. Through the adaptation of high-order schemes with a parameterized flux limiter, we have shown that it is feasible to achieve high-order accuracy while adhering to critical principles of bound preservation.
Furthermore, the versatility of our proposed scheme is evident in its application to ARZ traffic flow networks. The successful integration of BP schemes with non-convex sets, as demonstrated in this paper, sets the stage for continued research and development in the numerical analysis of hyperbolic systems.
In conclusion, our research tackles the complex challenges inherent in Temple-class systems and establishes a solid foundation for advancing non-convex BP schemes.
\section*{Acknowledgement}
	

\bibliographystyle{model1-num-names}
\bibliography{reference}

\end{document}